\theoremstyle{plain}
\newtheorem{theorem}{Theorem}
\newtheorem{proposition}[theorem]{Proposition}                 
\newtheorem{lemma}[theorem]{Lemma}
\newtheorem{corollary}[theorem]{Corollary}
\theoremstyle{definition}
\newtheorem{definition}[theorem]{Definition}\newtheorem{annahme}{Assumption}                                       
\theoremstyle{remark}
\newtheorem{remark}[theorem]{Remark}
\def\ps{\Pi}
\def\co{\mathbbm{L}}
\newcommand{\lebesgue}{\boldsymbol{\lambda}}
\def\C{\mathbbmtt{C}}
\def\E{\mathbb{E}}
\def\G{\mathbbm{G}}
\def\H{\mathbbm{H}}
\def\F{\mathscr{F}}
\def\bdg{\operatorname{c}}
\def\mom{\mathbbmtt{C}_{\operatorname{mo}}}
\def\N{\mathbb{N}}
\def\p{\mathscr{P}}
\def\R{\mathbb{R}}
\def\rd{\mathbbm{R}}
\def\V{\mathbb{V}}
\def\Ma{\mathbbm{M}}
\def\M{\mathds{M}}
\def\ML{\mathcal{L}}
\def\SS{\mathcal{S}}
\def\N{\mathbb{N}}
\def\P{\mathbb{P}}
\def\prox{\operatorname{prox}}
\def\supp{\operatorname{supp}}
\def\h{{\mathbf{h}}}
\def\X{X}
\renewcommand{\d}{\mathrm{d}}
\def\L{L}
\renewcommand{\a}{^{\frac{1}{\alpha}}}
\renewcommand{\p}{^{\frac{1}{p}}}
\newcommand{\e}{\mathrm{e}}
\newcommand{\U}{U}
\newcommand{\Var}{\operatorname{Var}}
\renewcommand{\AA}{\mathds{A}}
\newcommand{\EE}{\mathcal{E}}
\newcommand{\FF}{\mathcal{F}}
\newcommand{\GG}{\mathcal{G}}
\newcommand{\LL}{\mathcal{L}}
\newcommand{\ep}{\varepsilon}
\newcommand{\Ph}{\Phi^{b}}
\newcommand{\Ps}{\Pi^{b}}
\renewcommand{\c}{^{\operatorname{c}}}
\newcommand{\VV}{\mathcal{V}}
\renewcommand{\Upsilon}{Z}
\renewcommand{\tilde}{\widetilde}%
\title{Concentration of scalar ergodic diffusions and some statistical implications
}
\author{Cathrine Aeckerle-Willems and Claudia Strauch\thanks{Universit\"at Mannheim, Institut f\"ur Mathematik, 68131 Mannheim, Germany.\newline
\noindent E-mail: aeckerle@uni-mannheim.de/strauch@uni-mannheim.de}}
\date{\vspace*{-1.5em}}
\begin{document}

\maketitle

\begin{abstract}
We derive uniform concentration inequalities for continuous-time analogues of empirical processes and related stochastic integrals of scalar ergodic diffusion processes. 
Thereby, we lay the foundation typically required for the study of $\sup$-norm properties of estimation procedures for a large class of diffusion processes. 
In the classical i.i.d.~context, a key device for the statistical $\sup$-norm analysis is provided by Talagrand-type concentration inequalities. 
Aiming for a parallel substitute in the diffusion framework, we present a systematic, self-contained approach to such uniform concentration inequalities via martingale approximation and moment bounds obtained by the generic chaining method. 
The developed machinery is of independent probabilistic interest and can serve as a starting point for investigations of other processes such as more general Markov processes, in particular multivariate or discretely observed diffusions. 
As a first concrete statistical application, we analyse the $\sup$-norm error of estimating the invariant density of an ergodic diffusion via the natural local time estimator and the classical nonparametric kernel density estimator, respectively.

\end{abstract}

\section{Introduction}\label{sec:intro}
With regard to the very basic idea of estimating expected values via sample means as motivated by the law of large numbers, the relevance of concentration inequalities which quantify the deviation behaviour of more general additive functionals from their mean is pretty obvious. 
It is thus natural that they can be identified as being a central device in many statistical investigations, both from a frequentist and a Bayesian point of view.
From an applied perspective, expected maximal errors describing worst case scenarios are of particular interest for quantifying the quality of estimators. 
The analysis of $\sup$-norm risk criteria when estimating densities, regression functions or other characteristics thus is of immense relevance.
Nevertheless, even in classical situations like density estimation from i.i.d.~observations, the $\sup$-norm case is a delicate issue and usually not treated as exhaustively as $L^p$ or pointwise risk measures. 
Analysing the $\sup$-norm risk often requires to resort to empirical process theory.
More precisely, it leads to the need of finding moment bounds and concentration inequalities for the supremum of empirical processes, i.e., the supremum of additive functionals, over possibly infinite-dimensional function classes. 
This turns out to be a probabilistic challenge. 
In case of diffusion processes with unbounded state space, estimation of diffusion characteristics in $\sup$-norm risk is a mostly open question even in the most basic setting of continuous observation of a scalar process.
The current work aims at providing the fundamental probabilistic tool box, including uniform concentration inequalities for empirical processes and related concepts, in the continuous scalar diffusion context as they are essential for further statistical research on the $\sup$-norm risk. 

Since they are taken as a standard model for a number of random phenomena arising in various applications, statistical inference for ergodic diffusion processes, based on different observation schemes, has been widely developed during the past decades. 
While observation data as the central ingredient of any estimation procedure in practice are always discrete, it is insightful to start the statistical analysis in the framework of continuous observations, thereby providing both benchmark results and a starting point for estimation schemes based on discrete data. 
Within this framework, we demonstrate that our approach to concentration results can be specified as needed for proving sharp upper bounds on $\sup$-norm risks. 
Moreover, we introduce a machinery for obtaining uniform concentration inequalities for empirical processes based on martingale approximation and the generic chaining device that allows for the analogue treatment of more general classes of Markov processes as well.  
In particular, with regard to the diffusion process set-up, our approach could also be adapted for $\sup$-norm risk investigations based on discrete observations or multivariate state variables.
While the basic idea of martingale approximation is applied at several places in the statistical literature, we are not aware of \emph{any} systematic attempts to exploit the approach for deriving concentration results.

\subsection{Basic framework and main results}
Given a continuous-time Markov process $\X=(X_t)_{t\geq 0}$ with invariant measure $\mu$, the counterpart to the empirical process 
$\sqrt n\left(n^{-1}\sum_{i=0}^{n-1} f(Y_i) - \E\left[f(Y_0)\right]\right)$, $f\in\mathcal F$,
based on i.i.d.~observations $Y_0,..., Y_{n-1}$, is given as 
\begin{equation}\label{emp_pro}
\sqrt t\left(\frac{1}{t}\int_0^t f(X_s)\d s - \int f(x)\d\mu (x)\right),\quad f\in\mathcal F,
\end{equation} 
$\mathcal F$ denoting a class of functions, typically satisfying suitable entropy conditions. 
This con\-ti\-nu\-ous-time version of the classical empirical process is our first object of interest.
For the goal at hand, we will focus on diffusion processes given as a solution of the SDE
\begin{equation}\label{intro}
\d X_t\ =\ b(X_t)\d t+ \sigma(X_t)\d W_t, \quad X_0=\xi,\ t\geq 0,
\end{equation}
where $W$ is a standard Brownian motion and the initial value $\xi$ is a random variable independent of $W$. 
We restrict to the ergodic case where the Markov process $X$ admits an invariant measure, and we denote by $\rho_b$ and $\mu_b$ the invariant density and the associated invariant measure, respectively. 
Furthermore, we will always consider stationary solutions of \eqref{intro}, i.e., we assume that $\xi\sim\mu_b$.
In this framework, we will also provide precise uniform concentration inequalities for stochastic integrals
\begin{equation}\label{emp_stoch_pro}
\frac{1}{t}\int_0^t f(X_s)\d X_s - \E\left[f(X_0)b(X_0)\right], \quad f\in\mathcal F,
\end{equation} 
which turn out to be essential for statistical investigations.

\paragraph{Main results}
For a diffusion process given as the stationary solution of \eqref{intro}, Theorem \ref{theo:bernsup} provides an exponential tail inequality for
\[
\sup_{f\in\mathcal F}\ \sqrt t\ \Big| \frac{1}{t}\int_0^t f(X_s)\d s - \E\left[f(X_0)\right]\Big|\] 
as well as bounds on its $p$-th moments, for any $p\geq 1$, under standard entropy conditions on the function class $\mathcal F$.
Proposition \ref{theo:mart} and Theorem \ref{improved_version} constitute analogue results for the supremum of the stochastic integrals \eqref{emp_stoch_pro}. 
We emphasise at this point that we allow for unbounded functions $f\in\mathcal F$ which is even for nonuniform Bernstein-type results absolutely nonstandard. 
Furthermore, we introduce a localisation procedure which allows to look at processes on the whole real line instead of compacts. 

As a statistical application, we investigate nonparametric invariant density estimation in su\-pre\-mum-norm based on a continuous record of observations $(X_t)_{0\leq t \leq T}$ of the solution of \eqref{intro} started in the equilibrium. 
In the continuous framework, the local time -- which can be interpreted as the derivative of an empirical distribution function -- naturally qualifies as an estimator of this density. 
Corresponding upper bounds for all $p$-th moments of the $\sup$-norm loss are given in Corollary \ref{cor:centlt}. 
We advocate the investigation of the continuous, scalar case because it serves as a fundament and as a relevant benchmark for further investigations of discrete observation schemes and the multivariate case. 
With this purpose in mind, the density estimator based on local time is not the preferable choice as it does not open immediate access to discrete-time or multivariate estimators. 
In contrast, the very classical kernel (invariant) density estimator meets all these requirements, and it achieves the same (optimal) $\sup$-norm rates of convergence which we establish in Corollary \ref{prop:csi}.  

\subsection{Structure and techniques: an overview}
\paragraph{Introducing methods at the concrete example of a tail estimate for the local time}
We will start in Section \ref{sec:2} with an exponential uniform upper tail inequality for the local time of a continuous semimartingale, stated in Theorem \ref{theo:lt}. 
The local time of semimartingales was discussed by \cite{MR0501332}, and we adopt his definition:
Given a continuous semimartingale $\X$, denote by $(\L_t^a(\X))_{t\geq 0}$, $a\in\R$, the \emph{local time of $X$ at level $a$}, i.e., the increasing process which satisfies the following identity,
\begin{equation}\label{eq:tanaka}
(X_t-a)^-\ =\ (X_0-a)^--\int_0^t\mathds{1}\{X_s\leq a\}\d X_s+\frac{1}{2}\ L_t^a(X), \quad t>0,\ a\in\R.
\end{equation}
We have chosen to begin from this special case not only because of the statistical interest in the local time. 
It is instructive since, in the process of proving Theorem \ref{theo:lt}, we will already introduce key ideas and methods, including the generic chaining and localisation procedures that we will resort to for the further analysis of general empirical processes. 
From the representation \eqref{eq:tanaka} it actually becomes clear that analysing $\sup_{a\in\R} L_t^a(X)$ requires looking at 
\[\sup_{f\in\mathcal F}\Big|\int_0^t f(X_s)\d X_s\Big|,\quad\text{ for }\mathcal F:=\left\{\mathds{1}\{\ \cdot\ \leq a\}\ \colon\ a\in\R\right\}.\] 
This expression accounts for the connection to the investigation of uniform concentration inequalities for empirical processes and stochastic integrals as in \eqref{emp_stoch_pro}.
The proof thus serves as a blueprint and a concrete example that prevents from losing track while handling the technicalities coming up in the general empirical process setting. 
Under suitable moment conditions, we do not have to restrict to diffusion processes, yet. 
Instead, the results presented in Section \ref{sec:2} hold in a general continuous semimartingale framework.

A central ingredient of the proof of Theorem \ref{theo:lt} is the decomposition of the local time into a martingale part and a remaining term induced by \eqref{eq:tanaka}. 
Considering more general additive functionals as in \eqref{emp_pro}, we carry on this idea and prove a uniform concentration inequality for empirical processes \eqref{emp_pro} of general continuous semimartingales, assuming the existence of a martingale approximation.

\paragraph{Martingale approximation}
In the discrete framework, the technique of martingale approximation was initiated by \cite{MR0501277}, while \cite{bhat82} proved the continuous-time analogue.
Their basic idea consists in deriving the CLT for processes $\G_t(f)$,
\[
\G_t(f)\ := \ \sqrt t\left(\frac{1}{t}\int_0^t f(X_s)\d s-\E[f(X_0)]\right),
\]
$f$ some square-integrable function, by decomposing the above partial sums into the sum of a martingale with stationary increments and a remainder term. 
Asymptotic normality then follows from a martingale CLT.
For fixing terminology, suppose that $\G_t(f)$, $f\colon\R\to\R$, lives on a fixed filtered probability space $(\Omega,\mathscr{F},(\F_t)_{t\geq0},\P)$.
One then says that there exists a \emph{martingale appro\-xi\-ma\-tion to $\G_t(f)$}, $f\colon\R\to\R$, if there exist two processes $(M_t(f))_{t\geq0}$ and $(R_t(f))_{t\geq0}$ on $(\Omega,\mathscr{F},(\F_t)_{t\geq0},\P)$ such that
\begin{equation}\label{eq:martapp}
\G_t(f)\ =\ \frac{1}{\sqrt t}\ M_t(f)+\frac{1}{\sqrt t}\ R_t(f),\quad t>0,
\end{equation}
where $(M_t(f))_{t\geq0}$ is a martingale wrt $(\mathscr{F}_t)_{t\geq0}$ fulfilling $M_0(f)=0$ and the remainder term $(R_t(f))_{t\geq0}$ is negligible in some sense.

\paragraph{Results on uniform concentration for empirical processes of continuous semimartingales}
Given the availability of a suitable martingale approximation of the additive functional, we show in Section \ref{sec:3} how to derive uniform concentration results on $t^{-1}\int_0^t f(X_s)\d s$, $f\in\mathcal F$, in the continuous semimartingale setting. 
Speaking of uniform concentration results, we refer to inequalities of the form
\begin{equation}\label{conineqform}
\P\left(\sup_{f\in\mathcal F}\left|\G_t(f)\right|\geq \e\Phi(u)\right)\ \leq\ \exp(-u),\quad \text{for any } u\geq 1, \end{equation}
$\e$ denoting Euler's number,
which is an immediate consequence of the moment bound \[\left(\E\left[\sup_{f\in\mathcal F}\left|\G_t(f)\right|\right]^p\right)\p\ \leq\ \Phi(p)\] for some function $\Phi\colon (0,\infty)\to(0,\infty)$ and any $p\geq 1$.
Note that this is not a concentration inequality for the random variable $\sup_{f\in\mathcal F}t^{-1/2}\int_0^t f(X_s)\d s$ as such. 
It is rather a uniform or worst case statement on the concentration of $t^{-1/2}\int_0^t f(X_s)\d s$. 
Nonetheless, it additionally implies an upper exponential deviation inequality for the random variable \[\sup_{f\in\mathcal F}\frac{1}{\sqrt t}\Big|\int_0^t f(X_s)\d s\Big|\] from its mean.
These uniform concentration inequalities given in Theorem \ref{theo:1} are the main result in Section \ref{sec:3}. 
The tail behaviour incorporated in the nature of the function $\Phi$ in \eqref{conineqform} is described in terms of entropy integrals. 
This formulation is not the most handy but means a higher degree of generality. 
Of course, the entropy integrals can further be upper bounded under mild entropy conditions on the function class as known from the i.i.d.~set-up (see Lemma \ref{EntropyIntegralLebesgueMetric} of the Appendix).
The proof of Theorem \ref{theo:1} relies on a localised generic chaining procedure that can be applied assuming the existence of a martingale approximation of the empirical process $(\G_t(f))_{f\in\mathcal F}$.
Let us already note that our results on the concentration of empirical processes of the form \eqref{emp_pro} in Section \ref{sec:3} do \emph{not} require the existence of a local time process. 
Though the framework of continuous semimartingales is suitable for our goal of considering diffusion processes, the techniques could also be applied to other models, e.g., more general classes of Markov processes.
The only prerequisites consist in a maximal inequality of the form \eqref{maxineqX} and a martingale approximation with suitable moment bounds as in \eqref{martrep2}. 
We also advocate our approach as a starting point for the derivation of parallel results for multivariate diffusion processes.

\paragraph{Results on uniform concentration for empirical processes and stochastic integrals of scalar ergodic diffusions} 
The findings of Section \ref{sec:2} and Section \ref{sec:3} are applied to obtain uniform concentration results for $t^{-1}\int_0^t f(X_s)\d s$ and $t^{-1}\int_0^t f(X_s)\d X_s,\,f\in\mathcal F,$ in the diffusion framework in Section \ref{sec:cmei}.
For the concrete case of diffusion processes, we show in Section \ref{sec:4.2} that a suitable martingale approximation as described above exists.
This fact immediately implies the uniform concentration inequalities for empirical processes stated in Theorem \ref{theo:bernsup}.
The natural approach of analysing the supremum of these objects by exploiting concentration results such as Bernstein-type deviation inequalities for additive diffusion functionals has severe obstacles which are detailed in Remark \ref{rem:bern}.
In particular, this approach forces one to impose additional conditions on the characteristics of the diffusion process in order to prove the required \emph{uniform} concentration results.
Remarkably, the alternative strategy via martingale approximation allows to work under minimal assumptions on the class of diffusion processes.
As a consequence, we obtain results on the uniform concentration both of additive functionals and of stochastic integrals.

The uniform concentration inequality for the stochastic integrals of a diffusion process is subject of Proposition \ref{theo:mart} and makes use of Theorem \ref{theo:lt} on the local time. 
In Proposition \ref{theo:mart}, we consider the question of exploring the tail behaviour for quantities of the form
\[
\H_t(f)\ :=\ \sqrt t\left(\frac{1}{t}\int_0^t f(X_s)\d X_s-\E\left[f(X_0)b(X_0)\right]\right), \quad f\in\FF,
\]
$X$ some diffusion process solving \eqref{intro} and $\FF$ denoting some (possibly infinite-dimensional) class of integrable functions.
For adaptive procedures for estimating the characteristics of $\X$, one generally requires both an upper bound on 
\[\E\left[\sup_{f\in\FF}|\H_t(f)|\right],\quad\text{ $\FF$ some class of translated kernel functions,}\] and an upper tail bound for the deviation of the supremum. Using generic chaining methods initiated by Talagrand (cf.~\cite{tala14}), both can be derived by obtaining upper bounds for \emph{all} $p$-th ($p\geq1$) moments of $(\H_t(f))_{f\in\FF}$.

\paragraph{Uniform moment bounds and exponential inequalities for stochastic integrals via generic chaining}
Starting from the basic decomposition 
\begin{eqnarray}\nonumber
\H_t(f)&=& \frac{1}{\sqrt t}\int_0^t\left(f(X_s)b(X_s)-\E\left[f(X_0)b(X_0)\right]\right)\d s \ +\ \frac{1}{\sqrt t}\int_0^t f(X_s)\sigma(X_s)\d W_s\\\label{def:A}
&=:& (\mathbf{I})+(\mathbf{II}),
\end{eqnarray}
we recognise the empirical process $(\mathbf{I})$ which can be treated by means of Theorem \ref{theo:bernsup}. 
The next step then consists in finding upper bounds on the $p$-th moments of $(\mathbf{II})$.
Applying the Burkholder--Davis--Gundy (BDG) inequality and the occupation times formula, one obtains
\begin{align*}
\E\left[\left|\frac{1}{\sqrt t}\int_0^tf(X_s)\sigma(X_s)\d W_s\right|^p\right]
&\leq\ C_p\E\left[\left(\frac{1}{t}\int_0^tf^2(X_s)\sigma^2(X_s)\d s\right)^{p/2}\right]\\
&=\ C_p\E\left[\left(\frac{1}{t}\int_\R f^2(y)L_t^y(\X)\d y\right)^{p/2}\right]\\
&\leq\ C_pt^{-p/2}\left(\int_\R f^2(y)\d y\right)^{p/2}\ \E\left[\left(\sup_{a\in\R}|L_t^a(\X)|\right)^{p/2}\right].
\end{align*}
At first sight, this upper bound may seem to be very rough, but looking into the details of the proof, it becomes clear that one needs to obtain the $L^2$ norm of $f$ on the right hand side for the generic chaining procedure which accounts for this estimate. 
Conveniently, we can then apply Theorem \ref{theo:lt}.
It provides both an upper bound on the $p$-th moments $\E\left[\left\|\L_t^\bullet(X)\right\|_\infty^p\right]$ and a corresponding tail estimate.
Inspection of the proof of Theorem \ref{theo:lt} shows that it relies on three substantial ingredients:
\begin{itemize}
\item[(i)]
The proof exploits the decomposition of the local time process into a martingale part and a remainder term provided by Tanaka's formula. The analysis of the martingale part then relies on generic chaining methods.
\item[(ii)]
The latter requires the increments of the martingale to exhibit a subexponential tail behaviour wrt to a suitable metric (cf.~\eqref{ineq:a}). 
We discover this relation from a \emph{sharp} formulation of the bound
\[
\E\left[\left(\int_0^t\mathds{1}\{a\leq X_s\leq b \}\d \langle M,M\rangle_s\right)^p\right]
\ \leq\ c_p(b-a)^p\bigg\{\E\left[\langle M,M\rangle_t^{\frac{p}{2}}\right]+\E\left[\left(\int_0^t|\d V_s|\right)^p\right]\bigg\}
\] 
(see, e.g., Lemma 9.5 in \cite{legall16}), for $M=(M_t)_{t\geq 0}$ and $V=(V_t)_{t\geq 0}$ denoting the martingale part and the finite variation part of the semimartingale $X$, respectively. 
Here, `sharp' refers to the dependence of the constant $c_p$ on the order $p$ of the moments.
This can be obtained by means of Proposition 4.2 in \cite{bayo82} (see \eqref{4.2} below).
\item[(iii)]
The supremum taken over the entire real line is dealt with by an investigation of the random, compact support of the local time $\L_t^\bullet(\X)$.
In particular, we rely on a maximal inequality for the process $X$ which allows to control the probability that the support of the process exceeds certain levels.
\end{itemize}
As already announced, the proof of Theorem \ref{theo:lt} also serves as a blueprint for the analysis of the supremum of additive functionals in Theorem \ref{theo:1}.

There is some evidence of the statistical relevance of diffusion local time. 
As one first concrete example, let us mention the deep Donsker-type theorems for diffusion processes in \cite{vdvvz05} whose proof relies on a limit theorem for the supremum of diffusion local time.
Another instance concerns the completely different context of studying nonparametric Bayesian procedures for one-dimensional SDEs:
\cite{poketal13} investigate a Bayesian approach to nonparametric estimation of the periodic drift of a scalar diffusion from continuous observations and derive bounds on the rate at which the posterior contracts around the true drift in $L^2$ norm. 
Their theoretical results in particular rely on functional limit theorems for the local time of diffusions on the circle.

\subsection{Statistical applications}
The concept of local time is deeply rooted in probability theory.
As indicated above, it however presents a very interesting object from a statistical point of view, too.
For another concrete motivation, let us specify again to the important class of ergodic diffusion process solutions of SDEs of the form \eqref{intro} with invariant density $\rho_b$.
Given a set of observations of the solution of \eqref{intro} with unknown drift $b\colon \R\to\R$, natural statistical questions concern the estimation of $b$ and of the invariant density $\rho_b$. 
In fact, in view of the basic relation $b=(\sigma^2\rho_b)'/(2\rho_b)$, both tasks are obviously related. 
Note that continuous observations can identify the diffusion coefficient $\sigma^2$. 
Therefore, it is considered to be known, and the focus is on estimation of the drift coefficient $b$.

\paragraph{Invariant density estimation via local time}
Alternatively to \eqref{eq:tanaka}, $(\L_t^a(\X))_{t\geq 0}$ may be introduced via the following approximation result, holding a.s.~for every $a\in\R$ and $t\geq 0$,
\[
L_t^a(\X)\ =\ \lim_{\ep\to0}\frac{1}{\ep}\int_0^t\mathds{1}\{a\leq X_s\leq a+\ep\}\d\langle X\rangle_s.
\]
This representation now already suggests the meaningful interpretation of the local time as the derivative of an empirical distribution function.
Assuming that a continuous record of observations $(X_t)_{0\leq t\leq T}$ of the solution of \eqref{intro} is available, it thus appears natural to use local time for constructing an estimator $\rho_t^\circ$ of $\rho_b$ by letting
\begin{equation}\label{eq:rhocir}
\rho_t^\circ(a)\ :=\ \frac{L_t^a(X)}{t\sigma^2(a)}, \quad a\in\R.
\end{equation}
One might tackle the question of quantifying the quality of the estimator $\rho_t^\circ$ wrt the $\sup$-norm risk, e.g., by deriving upper bounds on the $p$-th ($p\geq1$) moments
\[
\E\left[\left(\sup_{a\in\R}\left|\rho_t^\circ(a)-\rho_b(a)\right|\right)^p\right]\ =\ \E\left[\left\|\frac{L_t^\bullet(X)}{t\sigma^2}-\rho_b\right\|_\infty^p\right].
\]
Local time thus presents an object of its own statistical interest. 
The corresponding investigation is subject of Section \ref{sec:tp}.

\paragraph{Kernel invariant density estimation}
Apart from the treatment of the local time estimator in $\sup$-norm loss, the statistical relevance of Theorem \ref{theo:bernsup} -- which deals with general empirical processes of a diffusion -- is demonstrated by a detailed study of the question of invariant density estimation via the \emph{kernel density estimator} (again in $\sup$-norm loss) and its relation to the local time density estimator in Section \ref{sec:tp}.
One clear advantage of the local time estimator $\rho_t^\circ$ introduced in \eqref{eq:rhocir} is that it allows for direct application of deep probabilistic results on diffusion local time. 
For example, weak convergence properties can be deduced in this way. 
At the same time, $\rho_t^\circ$ is merely of theoretical interest since its implementation in practice requires another approximation procedure.
One first step towards finding practically more feasible estimators is to replace $\rho_t^\circ$ by the standard kernel estimator
\begin{equation}\label{est:dens}
\rho_{t,K}(h)(x)\ :=\ \frac{1}{th}\int_0^tK\left(\frac{x-X_u}{h}\right)\d u,\quad x\in\R,
\end{equation}
$K\colon\R\to\R$ some smooth kernel function with compact support and $h>0$ some bandwidth.
The kernel density estimator outperforms the local time density estimator in various important aspects. 
First of all, from an applied perspective, working with the kernel density estimator serves as a universal, familiar approach to density estimation in all common models. 
For our particular diffusion framework, it is straightforward to extend the procedure to the case of discrete or multivariate observations. 
From a more theoretical perspective, the additional smoothness of the kernel estimator is desirable for investigations. 
The kernel density estimator can be viewed as a convolution operator applied to the local time. 
Interestingly, this smoothing is exactly what is required for proving the assertion on $\|\rho^\circ_t-\rho_b\|_\infty$ in Corollary \ref{cor:centlt}. 
Thus, our proof -- which makes use of the kernel density estimator -- is more natural than it might look at first sight. 
In addition, we show that our results on the moments of the supremum of empirical processes imply precise upper bounds on 
$\E\left[\|\rho_{t,K}(h)-\rho_b\|_\infty^p\right]$, $p\geq 1$. 
These upper bounds in particular verify that, in terms of performance in $\sup$-norm risk, the kernel density estimator with the universal bandwidth choice $t^{-1/2}$ is as good as the local time density estimator $\rho_t^\circ$.
Furthermore, we provide an in-depth analysis of the stochastic behaviour of $\left\|\rho_{t,K}(h)-\rho_t^\circ\right\|_\infty$ which in particular allows to transfer results for the local time estimator to the class of kernel estimators. 

\paragraph{Outlook: Application to adaptive (drift) estimation}
Beyond the question of invariant density estimation, another important statistical motivation for deriving the concentration inequalities in this paper is their application to adaptive estimation of the unknown drift coefficient $b$ in \eqref{intro}. 
This research goes beyond the scope of the present work and is dealt with in the preprint \cite{cacs18}. 
Using the presented results and techniques, we suggest a fully data-driven procedure which allows for rate-optimal estimation of the unknown drift wrt $\sup$-norm risk and, at the same time, yields an asymptotically efficient estimator of the invariant density of the diffusion. 
The procedure is based on Lepski's method for adaptive estimation. 
In \cite{cacs18}, we also deepen the analysis of the kernel density estimator started here. 
We derive a Donsker-type convergence result as it is relevant for the construction of (adaptive) confidence bands. 
Furthermore, we deal with the question of semi-parametric efficiency of the local time and the kernel density estimator in $\ell^\infty(\R)$. 
These contributions heavily rely on the exponential inequality for the $\sup$-norm difference between the local time and the kernel density estimator provided in Theorem \ref{theo:cath}. 
This result allows to transfer probabilistic knowledge on the local time to the more accessible and smoother kernel density estimator.

Apart from the apparent extensions to discrete observations of diffusion processes and multivariate state variables, further applications of the concentration inequalities derived in the current work could be found in the field of Bayesian statistical approaches, e.g., concerning supremum norm contraction rates.
Another very interesting application of the proposed martingale approximation approach to concentration inequalities concerns bifurcating Markov chains.
\cite{bihool17} construct adaptive nonparametric estimators of various quantities associated to bifurcating Markov chains.
Crucial ingredient for their proofs are Bernstein-type deviation inequalities which in particular can be applied to well localised but unbounded functions.
The corresponding findings are proven under a quite strong ergodicity assumption, and the authors suggest to use transportation-information inequalities for Markov chains for deriving similar results under more general conditions.
Since the idea of martingale approximation is applicable in the Markov chain set-up, too, there is a natural starting point for the machinery developed in this paper, providing another alternative approach to (even uniform) deviation inequalities for bifurcating Markov chains. 

\bigskip

\section{Exponential tail inequality for the supremum of the local time of continuous semimartingales}\label{sec:2}
Throughout this section, we work on a complete filtered probability space\linebreak
$(\Omega,\mathscr{F},(\mathscr{F}_t)_{t\geq0},\P)$, 
and we consider a continuous semimartingale $X$ with canonical decomposition $X=X_0 + M+V$.
Here, $X_0$ is an $\mathscr F_0$-measurable random variable, $M=(M_t)_{t\geq 0}$ denotes a continuous martingale with $M_0=0$ and $V=(V_t)_{t\geq 0}$ is a finite variation process with $V_0=0$.
To shorten notation, we will often abbreviate
\[\|Y\|_p\ :=\ \left(\E\left[|Y|^p\right]\right)\p,\quad\text{ for }Y\in L^p(\P),\ p\geq 1.\]
For proving concentration inequalities for generalised additive functionals of the semimartingale $X$,
we impose very general assumptions on the behaviour of the moments of the total variation of $V$ and the quadratic covariation of $M$.

\medskip

\begin{annahme}\label{M+V_upper_bound}
There exist deterministic functions $\phi_1\colon \R_+ \rightarrow\R_+$, $\phi_2\colon \R_+ \rightarrow\R_+$ such that, for any $p\geq1$,
\begin{equation}\label{eq:p}
\|X_0\|_p +\|X_t\|_p + \Big\|\int_0^t|\d V_s|\Big\|_p\ \leq\ p\phi_1(t),\qquad \left(\E\left[\langle M\rangle_t^{p/2}\right]\right)\p\ \leq\ \phi_2(t), \quad t>0.
\end{equation}
Here, $(\int_0^t|\d V_s|)_{t\geq 0}$ denotes the total variation process of $V$, and we write $|\d V_s|$ for integration with respect to the total variation measure of $V$.
Furthermore, we assume that
\[\lim_{t\to\infty}\phi_1(t)\ =\ \infty\quad\text{ and }\quad \phi_2(t)\ \leq\ \sqrt {\phi_1(t)}.
\]
\end{annahme}

With regard to our goal of proving tail estimates of the \emph{supremum} of stochastic processes, we are interested in finding upper bounds for all $p$-th moments of 
\[\boldsymbol{\sup_{a\in\R}}|L_t^a(\X)|\ =\ \|L_t^\bullet(\X)\|_\infty.\]
The derivation of such uniform bounds is rather involved and comprises several steps. 
While the complete proof has been deferred to the Appendix, it is instructive to sketch the main ideas now.
A natural starting point is given by Tanaka's formula. 
Using \eqref{eq:tanaka} and then \eqref{eq:p}, one obtains a decomposition of the local time process which allows to derive the upper bound 
\begin{equation}\label{eq:bound}
\left(\E\left[\left\|\L_t^\bullet(\X)\right\|_\infty^p\right]\right)\p
\ \leq\ 2p\phi_1(t) + 2\bigg(\E\bigg[\bigg(\sup_{a\in\mathbb Q}\mathds{1}\left\{\max_{0\leq s\leq t}|X_s|\geq |a|\right\}|\M^a_t|\bigg)^p\bigg]\bigg)\p,
\end{equation}
where $\M^a_t:=\int_0^t\mathds{1}\{X_s\leq a\}\d M_s$, $a\in\R$.
Dealing with the $\sup$-norm, it is crucial for the analysis to take into account the random, compact support of the local time in inequality \eqref{eq:bound}. 
The size of the support depends on the extremal behaviour of the semimartingale, i.e., if $a\in\supp(\L^\bullet_t(X))$, then necessarily $\max_{0\leq s \leq t}|X_s|\geq |a|$. 
This will allow to extend local arguments to the whole real line.

Coming back to \eqref{eq:bound}, the main task now consists in controlling the martingale part appearing in the last summand, and it is classical to use the BDG inequality in this respect. The best constant in the BDG inequality is of order $O(\sqrt p)$, and this fact plays an important role in our subsequent developments.
More precisely, Proposition 4.2 in \cite{bayo82} states that there exists a constant $\overline\bdg\geq 1$ such that, 
for any $p\geq 2$ and any continuous martingale $(N_t)_{t\geq0}$ with $N_0=0$, one has
\begin{equation}\label{4.2}
	\left(\E\left[\left(\sup_{0\leq s\leq t}\left|N_s\right|\right)^p\right]\right)\p\ \leq\ \overline\bdg\sqrt p\left(\E\left[\langle N\rangle_t^{p/2}\right]\right)\p.
\end{equation}
Consequently, whenever Assumption \ref{M+V_upper_bound} holds true, one obtains for any $p\geq 1$ 
\begin{equation}\label{eq:baryor}
\left(\E\left[\left(\sup_{0\leq s\leq t}\left|\M_s\right|\right)^p\right]\right)\p\ \leq\ \bdg\sqrt p\phi_2(t),\quad\text{with }\bdg:=\max\left\{1,\sqrt 2 \overline\bdg\right\},
\end{equation}
due to H\"older's inequality and \eqref{eq:p}.
The upper bound \eqref{eq:baryor} in particular allows to explore the tail behaviour of $(\M^a_t)_{a\in\R}$.
A chaining procedure then yields an upper bound on the expectation on the rhs of \eqref{eq:bound} in terms of entropy integrals. 
This chaining procedure has to be done locally first since -- in terms of the finiteness of covering numbers -- the corresponding metric structure is not well behaved on the whole real line. Therefore, compact intervals of fixed length are considered, and it is taken into account that the probability of the support of the local time exceeding certain levels is vanishing (see Figure \ref{T1}).
The following maximal inequality for the process $(X_s)_{s\in[0,t]}$ allows to control this probability.
Its short proof nicely illustrates the basic idea of how to exploit the moment bounds given in \eqref{eq:p}.

\begin{figure}
\begin{center}
\includegraphics[trim = 0mm 20mm 0mm 20mm, clip, width=0.8\textwidth]{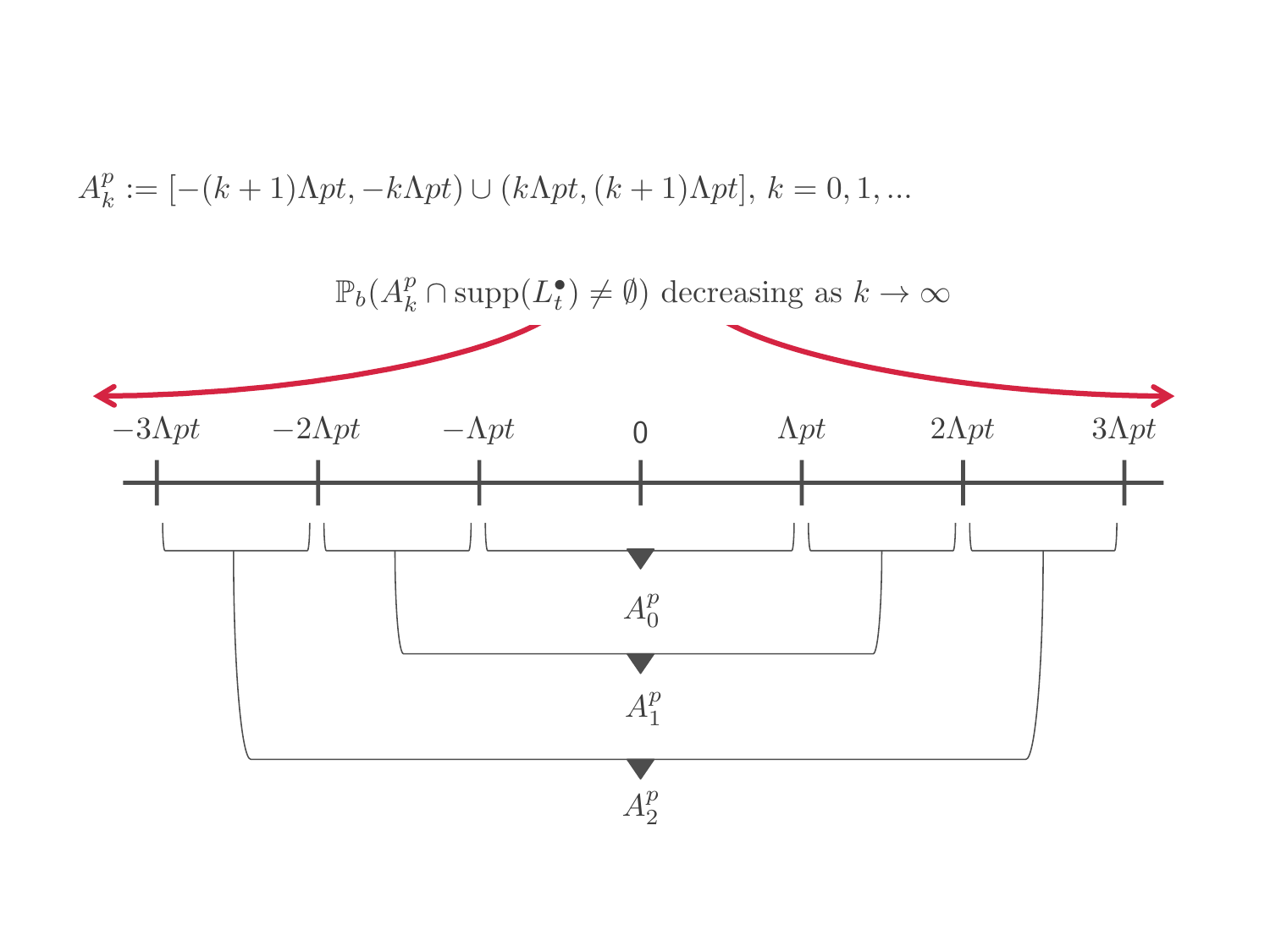}
\caption{Localisation procedure}\label{T1}
\end{center}
\end{figure}

\medskip

\begin{lemma}[Maximal inequality for $\X$]\label{maximal_inequality_X}
Under Assumption \ref{M+V_upper_bound}, it holds for any $u\geq 1$
\begin{equation}\label{maxineqX}
\P\left(\max_{0\leq s \leq t}|X_s|\geq \e\left(u\phi_1(t)+\bdg\sqrt u\phi_2(t)\right)\right)\ \leq\ \e^{-u}.
\end{equation}
\end{lemma}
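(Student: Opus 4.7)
The plan is to deduce the maximal inequality from the moment bound \eqref{eq:baryor} on the martingale part together with the moment hypotheses in \eqref{eq:p}, by an optimised Markov argument with $p = u$.

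First I would use the canonical decomposition $X_s = X_0 + M_s + V_s$ and the pointwise estimate $|V_s| \leq \int_0^s |\d V_r| \leq \int_0^t|\d V_r|$, to write
\[
\max_{0\leq s\leq t}|X_s|\ \leq\ |X_0|+\sup_{0\leq s\leq t}|M_s|+\int_0^t|\d V_s|.
\]
Taking $L^p(\P)$-norms and applying Minkowski's inequality yields
\[
\Big\|\max_{0\leq s\leq t}|X_s|\Big\|_p\ \leq\ \|X_0\|_p+\Big\|\sup_{0\leq s\leq t}|M_s|\Big\|_p+\Big\|\int_0^t|\d V_s|\Big\|_p.
\]

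Next I would plug in the two available inputs. By Assumption \ref{M+V_upper_bound}, $\|X_0\|_p+\|\int_0^t|\d V_s|\|_p\leq p\phi_1(t)$, and by the BDG-based bound \eqref{eq:baryor}, $\|\sup_{0\leq s\leq t}|M_s|\|_p\leq \bdg\sqrt{p}\,\phi_2(t)$ for every $p\geq1$. Combining these gives the uniform moment estimate
\[
\Big\|\max_{0\leq s\leq t}|X_s|\Big\|_p\ \leq\ p\phi_1(t)+\bdg\sqrt{p}\,\phi_2(t),\qquad p\geq 1.
\]

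Finally, I would apply Markov's inequality: for any $p\geq 1$ and any $\lambda>0$,
\[
\P\Big(\max_{0\leq s\leq t}|X_s|\geq\lambda\Big)\ \leq\ \lambda^{-p}\,\Big\|\max_{0\leq s\leq t}|X_s|\Big\|_p^{\,p}\ \leq\ \lambda^{-p}\bigl(p\phi_1(t)+\bdg\sqrt{p}\,\phi_2(t)\bigr)^{p}.
\]
Choosing $p=u\geq 1$ and $\lambda=\e\bigl(u\phi_1(t)+\bdg\sqrt{u}\,\phi_2(t)\bigr)$ makes the ratio collapse to $\e^{-u}$, which is exactly the asserted inequality. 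There is no real obstacle here: the proof is a textbook Markov/BDG argument; the only point that needed care is aligning the moment growth $p\phi_1(t)+\bdg\sqrt{p}\,\phi_2(t)$ so that the substitution $p=u$ produces the mixed subexponential/Gaussian tail $u\phi_1(t)+\bdg\sqrt{u}\,\phi_2(t)$, which is precisely what the hypotheses of Assumption \ref{M+V_upper_bound} were calibrated to deliver.
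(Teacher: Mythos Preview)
Your proof is correct and follows exactly the paper's approach: the same pathwise decomposition, the same moment bound $p\phi_1(t)+\bdg\sqrt{p}\,\phi_2(t)$ via \eqref{eq:p} and \eqref{eq:baryor}, and the same optimised Markov step with $p=u$. The only cosmetic difference is that the paper packages this last step into Lemma \ref{moment_tail_lemma}, whereas you carry out the Markov computation explicitly.
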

\begin{proof}
Note that 
\[
	\max_{0\leq s \leq t}|X_s|\ \leq\ |X_0| + \int_0^t|\d V_s| + \max_{0\leq  s\leq t} |M_t|.
\]
Consequently, using \eqref{eq:p} and \eqref{eq:baryor}, for any $p\geq 1$,
\[\Big\|\max_{0\leq s \leq t}|X_s|\Big\|_p\ \leq\ \phi_1(t)p +  \bdg\sqrt p\phi_2(t).\]
Lemma \ref{moment_tail_lemma} from Appendix \ref{app:A} then gives \eqref{maxineqX}.
\end{proof}

In particular, the maximal inequality \eqref{maxineqX} provides the final ingredient for verifying the main result of this section. 
Its complete proof is given in Appendix \ref{App:B}.

\medskip

\begin{theorem}\label{theo:lt}
Consider a continuous semimartingale $X$ with canonical decomposition $X=X_0+M+V$, and grant Assumption \ref{M+V_upper_bound}.
Then, there exists a positive constant $\kappa$ (not depending on $p$) such that, for any $p\geq 1$,
\[
\left(\E\left[\left\|\L_t^{\bullet}(X)\right\|_\infty^p\right]\right)^{\frac{1}{p}}
\ \leq\ \kappa\left(p\phi_1(t)+\sqrt{p}\phi_2(t)+\left(\sqrt{\phi_1(t)}+ \sqrt{\phi_2(t)}\right)\log(2p\Lambda(t))\right),
\]
where $\Lambda(t):=\e\left(\phi_1(t)+\bdg\phi_2(t)\right)$.
Consequently, for any $u\geq 1$,
\[
\P\left(\|\L_t^\bullet(X)\|_\infty\geq \e \kappa\left(u\phi_1(t) + \sqrt{u}\phi_2(t) + \left(\sqrt{\phi_1(t)} + \sqrt{\phi_2(t)}\right)\log(2u\Lambda(t))\right)\right)
\ \leq\ \e^{-u}.
\]
\end{theorem}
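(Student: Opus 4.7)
The plan is to execute the three-ingredient strategy summarised after \eqref{4.2}: a Tanaka-type decomposition that isolates a single parametric martingale family, sub-exponential control of its increments via the sharp BDG inequality combined with the Ba\~nuelos--Yor estimate, and dyadic-in-space localisation using the maximal inequality \eqref{maxineqX} to extend compact-interval bounds to the whole real line. The starting point is Tanaka's formula \eqref{eq:tanaka}, which splits $\tfrac12 L_t^a(\X)$ into $(X_t-a)^--(X_0-a)^-+\int_0^t\mathds 1\{X_s\leq a\}\d V_s$ and the martingale $\mathbb M_t^a:=\int_0^t\mathds 1\{X_s\leq a\}\d M_s$. Since $L_t^a(\X)$ vanishes outside $[-\max_{s\leq t}|X_s|,\max_{s\leq t}|X_s|]$, the $|a|$-dependence of the first three summands is controlled by the maximal inequality \eqref{maxineqX}, and Assumption \ref{M+V_upper_bound} contributes directly the $p\phi_1(t)+\sqrt p\,\phi_2(t)$ piece of the final bound. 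Everything then reduces to controlling the random-support supremum of $(\mathbb M_t^a)_{a\in\mathbb Q}$ as in \eqref{eq:bound}.

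For the martingale family I would combine the sharp BDG inequality \eqref{4.2} applied to the increments $\mathbb M_t^a-\mathbb M_t^b=\int_0^t\mathds 1\{a<X_s\leq b\}\d M_s$ with the Ba\~nuelos--Yor bound at exponent $p/2$, followed by the moment bounds of \eqref{eq:p}. Tracking the precise $p$-dependence of the constant $c_{p/2}$ (the very reason why ingredient (ii) stresses ``sharp'') yields increments with mixed sub-Gaussian/sub-exponential tails in the pseudo-metric $d(a,b)=\sqrt{|b-a|}$, where the sub-Gaussian scale is $\sqrt{\phi_2(t)}$ and the sub-exponential scale is $\sqrt{\phi_1(t)}$. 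On a compact $[-N,N]$ the covering numbers under $d$ are of order $N/\eta^2$, and Talagrand's generic chaining bound in the mixed-tail version then controls $\sup_{|a|\leq N}|\mathbb M_t^a|$ in $L^p$ by $\sqrt p\,\phi_2(t)+p\,\phi_1(t)$ (the $\gamma_2$ and $\gamma_1$ functionals) plus an entropy contribution of the order $(\sqrt{\phi_1(t)}+\sqrt{\phi_2(t)})\log N$.

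To pass from $[-\Lambda(t),\Lambda(t)]$ to all of $\R$, I would decompose dyadically into annuli $I_k:=\{2^{k-1}\Lambda(t)<|a|\leq 2^k\Lambda(t)\}$, $k\geq 1$. On $I_k$ the indicator $\mathds 1\{\max_s|X_s|\geq|a|\}$ vanishes unless $\max_s|X_s|\geq 2^{k-1}\Lambda(t)$, which by \eqref{maxineqX} and the precise calibration of $\Lambda(t)$ has probability at most $\exp(-c\,2^{k-1})$ for some absolute $c>0$. A H\"older split between this indicator and the compact-interval chaining bound on $I_k$, followed by summation of the resulting geometric series in $k$, produces the $\log(2p\Lambda(t))$ factor while preserving the dominant scales $\sqrt p\,\phi_2(t)+p\,\phi_1(t)$; the exponential tail inequality then follows from the moment bound by invoking Lemma \ref{moment_tail_lemma}. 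The main obstacle will be the careful bookkeeping of $p$-dependence at every step: the sharp Ba\~nuelos--Yor constant $c_{p/2}$, the $\sqrt p$ in BDG, Talagrand's chaining constants in the mixed-tail regime, and the exponents produced by the dyadic summation must all combine so that the final estimate genuinely separates into a sub-Gaussian term $\sqrt p\,\phi_2(t)$, a sub-exponential term $p\,\phi_1(t)$, and a single logarithmic remainder, rather than leaving behind an extra polynomial factor in $p$.
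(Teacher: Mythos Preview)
Your architecture agrees with the paper's---Tanaka decomposition, sharp moment control on the increments, chaining on compacts, localisation via \eqref{maxineqX}---but two points need correction. First, the increments do not have genuinely sub-Gaussian tails in the $\phi_2$-scale: combining \eqref{4.2} with Lemma~\ref{lem:9.5} at order $p/2$ gives $\|\M_t^a-\M_t^b\|_p\lesssim \sqrt{|b-a|}\,\bigl(p\sqrt{\phi_1(t)}+p^{3/4}\sqrt{\phi_2(t)}\bigr)$, so the $\phi_2$-exponent is $3/4$, not $1/2$. The paper simply absorbs this into the single sub-exponential bound $\|\M_t^a-\M_t^b\|_p\leq p\,d_1(a,b)$ with $d_1(a,b)=\min\{8\sqrt{|a-b|},1\}\cdot\e\bigl(\bdg\sqrt{\phi_1(t)}+\bdg^{3/2}\sqrt{\phi_2(t)}\bigr)$ and runs the $\alpha=1$ chaining of Proposition~\ref{thm:dirk}. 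This discrepancy is harmless for the final statement.

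The real gap is in your localisation. On $I_k=\{2^{k-1}\Lambda(t)<|a|\leq 2^k\Lambda(t)\}$ the indicator probability is indeed $\exp(-c\,2^{k-1})$, but after the H\"older split at order $2p$ you are left with the factor $\exp\bigl(-c\,2^{k-1}/(2p)\bigr)$, which is \emph{not} geometric in $k$: the first $\sim\log_2 p$ terms are essentially $1$, so summing introduces an extra $\log p$ on the $\sqrt p\,\phi_2(t)$ piece and $(\log p)^2$ on the entropy piece (which itself grows like $k$ on $I_k$). The resulting bound would not have the claimed form with $p$-independent $\kappa$. The paper's remedy is to calibrate the shells to the moment order: for fixed $p_0\geq1$ it uses intervals $A_k^{p_0}$ of \emph{equal} width $p_0\Lambda(t)$, so that $\P\bigl(\max_{s\leq t}|X_s|>kp_0\Lambda(t)\bigr)\leq e^{-kp_0}$, and after H\"older at order $2p_0$ the factor becomes the genuinely geometric $e^{-k/2}$, independent of $p_0$. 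Since all $A_k^{p_0}$ have the same length, their entropy integrals are equal, of order $(\sqrt{\phi_1(t)}+\sqrt{\phi_2(t)})\log(p_0\Lambda(t))$, and the sum delivers the $\log(2p\Lambda(t))$ factor cleanly. Replacing your dyadic shells by these $p$-scaled linear shells closes the gap.
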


\bigskip

\section{Uniform concentration of empirical processes of continuous semimartingales}\label{sec:3}
In Section \ref{sec:2}, we focused on analysing the $\sup$-norm of the local time. 
Rephrasing the problem, we realise why the proof of Theorem \ref{theo:lt} is a blueprint for investigating a much more general setting. 
Letting $\mathcal F:=\left\{\mathds{1}_{(-\infty,a]}(\cdot):\, a\in\R\right\}$, Tanaka's formula and equation \eqref{eq:bound} reveal the core of the investigation: 
It consists in controlling 
\[\sup_{a\in\R}\M_t^a\ =\ \sup_{a\in\R}\int_0^t\mathds{1}\left\{X_s\leq a\right\}\d M_s\ =\ \sup_{f\in\mathcal F}\int_0^t f(X_s)\d M_s.\]
Thus, the supremum of the process can be analysed within the framework of empirical processes and related concepts.
The purpose of this section is to extend the study from the specific case of local time to additive functionals of the form $\sup_{f\in\mathcal F}\int_0^tf(X_s)\d s$ and further to stochastic integrals $\sup_{f\in\mathcal F}\int_0^tf(X_s)\d X_s$. 

We start by investigating empirical processes of some continuous semimartingale $X$ of the form
\begin{equation}\label{eq:g}
(\G_t^{b_0}(f))_{f\in\FF}\ :=\ \left(\frac{1}{\sqrt t}\int_0^t \left(f(X_u)b_0(X_u)-\E[f(X_0)b_0(X_0)]\right)\d u\right)_{f\in\FF},\quad t>0,
\end{equation}
indexed by a countable family $\FF\subset L^2(\lebesgue)$, $\lebesgue$ denoting the Lebesgue measure, and for a function $b_0\colon \R\to\R$ satisfying 
\begin{equation}\label{eq:b0}
|b_0(x)|\ \leq\ C(1+|x|^\eta),
\end{equation} 
$\eta\geq 0$, $C\geq 1$ some fixed constants.
The main idea for deriving concentration inequalities is to use the technique of martingale approximation which was already introduced in Section \ref{sec:intro} (cf.~\eqref{eq:martapp}) in a more systematic manner.
While Theorem \ref{theo:lt} for the local time concerns the supremum taken over the whole real line, we now turn to investigating suprema over general (possibly infinite-dimensional) function classes. 
For any semi-metric space $(\mathcal F, d)$, denote by $N(u,\FF,d)$, $u>0$, the covering number of $\FF$ wrt $d$, i.e., the smallest number of balls of radius $u$ in $(\FF,d)$ needed to cover $\FF$. Furthermore, we introduce  
\[E(\FF,d,\alpha)\ :=\ \int_0^\infty \left(\log N(u,\FF,d)\right)\a\d u,\quad \alpha>0.\]
With regard to the indexing classes of functions $\FF$ in \eqref{eq:g}, we impose the following basic conditions.

\medskip

\begin{annahme}\label{ann:1}
$\FF$ is a countable class of real-valued functions satisfying, for some fixed constants $\U,\V>0$,
\[\sup_{x\in\R}|f(x)|\ \leq\ \U,\quad \sup_{f\in\FF}\|f\|_{L^2(\lebesgue)}\ \leq \V.\]
In addition, all $f\in\FF$ have compact support with
\[\supp(f)\subset [x_f,x^f], \text{ where }|x^f-x_f|\leq \mathcal S \text{ and }\V\leq\sqrt\SS,\text{ for some }x_f< x^f,\,\mathcal S>0.\]
\end{annahme}

\medskip

\begin{annahme}\label{ann:3}
$\FF$ is a countable class of real-valued functions such that there exist constants $\e^2<\AA<\infty$ and $v\geq 2$ such that, for any probability measure $\mathbb{Q}$,
\begin{equation}\label{eq:ent}
\forall\ep\in(0,1),\quad N\left(\ep,\FF,\|\cdot\|_{L^2(\mathbb{Q})}\right)\ \leq \ (\AA/\ep)^v.
\end{equation}
\end{annahme}

\medskip

Throughout the sequel, $\mom>0$ denotes a constant satisfying
$\|X_0\|_p=\left(\E\left[|X_0|^p\right]\right)\p\leq p\mom$, $p\geq 1$. The existence of such a constant follows from Assumption \ref{M+V_upper_bound}.
Furthermore, we use the notation $\sup_{f\in\FF}|\G_t(f)|=:\|\G_t\|_\FF$.
\medskip

\begin{theorem}\label{theo:1}
Let $\X$ be a continuous semimartingale as in Assumption \ref{M+V_upper_bound}, and let $b_0\colon\R\to\R$ be a function satisfying \eqref{eq:b0} for some constants $\eta\geq 0, C\geq 1$.
Suppose that the function class $\FF$ satisfies Assumption \ref{ann:1}, and define $\G_t^{b_0}(\cdot)$ according to \eqref{eq:g}.
Assume further that any $f\in\FF$ admits a martingale approximation 
\begin{equation}\label{martrep}
\G^{b_0}_t(f)\ =\ t^{-1/2}\Ma_t^{f}\ +\ t^{-1/2}\rd_t^{f},\quad t>0,
\end{equation}
for which there exist constants $\Psi_1,\Psi_2$ and some $\alpha>0$ such that, for any $f,g\in\FF$,
\begin{align}\label{martrep2}\begin{split}
\left(\E\left[|\Ma_t^f|^p\right]\right)\p&\leq\ \Psi_1\sqrt t p\a\|f\|_{L^2(\lebesgue)}, \hspace*{3em}\left(\E\left[\sup_{f\in\FF}|\rd^f_t|^p\right]\right)\p\ \leq\ \Psi_2p,\\
\left(\E\left[|\Ma_t^f-\Ma_t^g|^p\right]\right)\p&\leq\ \Psi_1\sqrt t p\a\|f-g\|_{L^2(\lebesgue)}.
\end{split}
\end{align}
For $k\in\N_0$ and fixed $p\geq 1$, define 
\begin{equation}\label{ikfk}
\begin{array}{r@{}l}
I_k\ &:=\ \big(-2(k+1)p\Lambda(t),\ -2kp\Lambda(t)\big]\cup \big(2kp\Lambda(t),\ 2(k+1)p\Lambda(t)\big] \oplus [-\mathcal S, \mathcal S],\\ [4pt]
\mathcal{F}_k\ &:=\ \left\{f\in\mathcal F\colon \supp(f)\subset I_k\right\},
\end{array}
\end{equation} with
\begin{equation}\label{def:lamb}
\Lambda(t)\ :=\ \max\left\{\lambda\e\left(\phi_1(t)+\bdg\phi_2(t)\right),1\right\}
\end{equation} 
and $\lambda>1$ such that $\max\{\mathcal{S},\e\mom\}< p\Lambda(t)$, for any $p,t\geq 1$. 
Then, for any $t,p\geq 1$, whenever
\begin{equation}\label{sumentropyintegral}
\sum_{k=0}^{\infty}E(\FF_k,\e\Psi_1\|\cdot\|_{L^2(\lebesgue)},\alpha) \exp\left(-\frac{k}{2}\right)\ <\ \infty,
\end{equation}
it holds
\begin{align*}
\left(\E\left[\|\G^{b_0}_t\|_{\mathcal{F}}^p\right]\right)^{\p}
&\leq\ C_\alpha\sum_{k=0}^{\infty}  E(F_k,\e\Psi_1\|\cdot\|_{L^2(\lebesgue)},\alpha) \exp\left(-\frac{k}{2}\right) + 6\Psi_1 (2p)\a\V+2\frac{\Psi_2p}{\sqrt t}\\
&\hspace*{12em}+ \sqrt t C\U(1+2\eta\mom)^{\eta}\exp\left(-\frac{\Lambda(t)}{2\e\mom}\right).
\end{align*}
\end{theorem}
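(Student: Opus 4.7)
The plan is to combine the martingale approximation \eqref{martrep} with a generic chaining argument on each annular subclass $\FF_k$ and a localisation step driven by the maximal inequality of Lemma \ref{maximal_inequality_X}. First, using \eqref{martrep}, I would split
\[
\|\G_t^{b_0}\|_\FF\ \leq\ t^{-1/2}\sup_{f \in \FF}|\Ma_t^f|\ +\ t^{-1/2}\sup_{f \in \FF}|\rd_t^f|.
\]
The remainder bound in \eqref{martrep2} immediately contributes $2\Psi_2 p/\sqrt t$ to the final estimate (with a factor $2$ absorbing later bookkeeping). The work reduces to controlling $t^{-1/2}\big\|\sup_{f \in \FF}|\Ma_t^f|\big\|_p$ via the annular decomposition $\FF = \bigcup_{k \geq 0} \FF_k$ together with the sub-additive bound $\sup_{f \in \FF}|\Ma_t^f| \leq \sum_k \sup_{f \in \FF_k}|\Ma_t^f|$, treating each $\FF_k$ separately.

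On each $\FF_k$, the increment bound in \eqref{martrep2} is precisely a sub-$\psi_\alpha$ condition for the metric $\sqrt t\,\e\Psi_1\|\cdot\|_{L^2(\lebesgue)}$. Talagrand's generic chaining theorem for $\gamma_\alpha$-functionals, in its $L^p$ form, then yields, for any reference point $f_k \in \FF_k$,
\[
\Big\|\sup_{f \in \FF_k}|\Ma_t^f - \Ma_t^{f_k}|\Big\|_p\ \lesssim\ \sqrt t\,\Big(C_\alpha\, E(\FF_k,\e\Psi_1\|\cdot\|_{L^2(\lebesgue)},\alpha)\ +\ p\a\,\e\Psi_1\V\Big),
\]
together with the baseline estimate $\|\Ma_t^{f_k}\|_p \leq \Psi_1 \sqrt t\, p\a\V$ from \eqref{martrep2} applied at $f_k$. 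After the overall factor $t^{-1/2}$ the first term produces the entropy summand and the second a $p\a\V$ contribution; however, absent further localisation the sum over $k$ of such uniform-in-$k$ bounds would diverge, which is where the annular structure encoded in \eqref{ikfk} must be exploited.

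For the localisation introduce the event $A_k := \{\max_{0 \leq s \leq t}|X_s| \geq 2kp\Lambda(t)\}$. The definition of $\Lambda(t)$ in \eqref{def:lamb} together with Lemma \ref{maximal_inequality_X} gives $\P(A_k) \leq \e^{-c\lambda kp}$ for an absolute constant $c>0$. On $A_k^c$, since $\SS < p\Lambda(t)$, every $f \in \FF_k$ with $k \geq 1$ satisfies $f(X_s) = 0$ for all $s \in [0,t]$, so the random part of $\G_t^{b_0}(f)$ vanishes and $\G_t^{b_0}(f) = -\sqrt t\, \E[f(X_0)b_0(X_0)]$ is deterministic; bounding this quantity through \eqref{eq:b0} and the tail estimate implied by $\|X_0\|_p \leq p\mom$, then summing over $k$, yields the last summand $\sqrt t\, C\U(1+2\eta\mom)^\eta\exp(-\Lambda(t)/(2\e\mom))$. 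On $A_k$ I would apply Cauchy--Schwarz,
\[
\Big\|\sup_{f \in \FF_k}|\G_t^{b_0}(f)|\,\mathbf 1_{A_k}\Big\|_p\ \leq\ \Big\|\sup_{f \in \FF_k}|\G_t^{b_0}(f)|\Big\|_{2p}\cdot \P(A_k)^{1/(2p)},
\]
so that $\P(A_k)^{1/(2p)} \leq \e^{-c\lambda k/2}$ becomes independent of $p$; with $\lambda$ calibrated in \eqref{def:lamb} so that $c\lambda \geq 1$, this produces the weight $\e^{-k/2}$, while the $L^{2p}$-norm on the right is handled by the chaining estimate of the previous paragraph with $2p$ in place of $p$. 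Summing yields the weighted entropy series $\sum_k E(\FF_k,\e\Psi_1\|\cdot\|_{L^2(\lebesgue)},\alpha)\,\e^{-k/2}$ (finite by \eqref{sumentropyintegral}) and the $6\Psi_1(2p)\a\V$ contribution, with the constant $6$ absorbing $\sum_k \e^{-k/2}$ together with the numerical factors from chaining.

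The main obstacle is the coordinated bookkeeping between the chaining bounds, whose $p$-dependence enters through $p\a$, and the localisation factors, which must be rendered $p$-independent to produce the claimed exponential weights $\e^{-k/2}$. This is precisely why the Cauchy--Schwarz splitting and the calibration of $\lambda$ in \eqref{def:lamb} are essential. A secondary technical point is to use a generic chaining estimate in $L^p$ that matches the sharp $p\a$-dependence of the martingale increments in \eqref{martrep2} so that the various constants (notably the factor $6$ in front of $\Psi_1(2p)\a\V$) track correctly through the summation over $k$.
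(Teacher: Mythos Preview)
Your proposal captures the paper's approach: localisation via the events $A_k$, Cauchy--Schwarz to factor out $\P(A_k)^{1/(2p)}\leq\e^{-k/2}$, generic chaining on each $\FF_k$ driven by the increment bound in \eqref{martrep2}, and the tail estimate for $\E[f(X_0)b_0(X_0)]$ producing the final summand. The constants and the summation bookkeeping match the paper's argument closely.

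There is one ordering issue worth flagging. Your first paragraph reduces the problem to controlling $t^{-1/2}\|\sup_{f\in\FF}|\Ma_t^f|\|_p$ and then proposes to localise that quantity via $\sup_{f\in\FF}|\Ma_t^f|\leq\sum_k\sup_{f\in\FF_k}|\Ma_t^f|$. But the localisation argument in your third paragraph is for $\G_t^{b_0}(f)$, not for $\Ma_t^f$: on $A_k^c$ you use that $f(X_s)=0$ for all $s$, which makes the integral in $\G_t^{b_0}(f)$ vanish, leaving only the deterministic centring. This says nothing about $\Ma_t^f$ on $A_k^c$ --- the martingale approximation \eqref{martrep} is abstract, and $\Ma_t^f$ and $\rd_t^f$ could both be nonzero and cancel. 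The paper handles this by localising $\G_t^{b_0}$ \emph{first} (splitting into $\FF_0$, $\FF_0^c$ on $A_f$, and $\FF_0^c$ on $A_f^c$), and only then applying the martingale approximation to the pieces that carry the indicator $\mathds{1}(A_f)$; the Cauchy--Schwarz step is then performed on $\Ma_t^f\mathds{1}(A_k)$. Your third paragraph effectively does the same thing, so the fix is simply to drop the premature reduction to $\Ma_t^f$ in paragraph one and keep the localisation at the level of $\G_t^{b_0}$, exactly as you in fact carry it out later.
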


A few comments on the above result are in order.

\medskip

\begin{remark}
\begin{itemize}
\item[(a)]
It will be shown that there exists a broad class of ergodic diffusion processes admitting a decomposition of the form \eqref{martrep}, with moments satisfying \eqref{martrep2}.
In most cases, it is not that difficult to bound the moments of the remainder term $\rd_t^f$, and usually the corresponding arguments already imply the \emph{uniform} moment bounds required in \eqref{martrep2}.
The analysis of the martingale part $\Ma_t^f$ is more challenging. 
Under the given assumptions, it suffices however to derive \emph{non-uniform} upper bounds on $\|\Ma_t^f\|_p$.
Theorem \ref{theo:1} then allows to translate these bounds into bounds on $\|\boldsymbol{\sup_{f\in\FF}}|\G_t^{b_0}(f)|\|_p$.
\item[(b)]
Assumption \eqref{sumentropyintegral} is a very weak one. 
In fact, we will show that the conditions of Theorem \ref{theo:1} and Assumption $\ref{ann:3}$ on the function class $\FF$ imply that \eqref{sumentropyintegral} holds true for $\alpha \in\{2/3,1,2\}$ (cf.~Lemma \ref{EntropyIntegralLebesgueMetric} in Appendix \ref{app:A}).
Whenever $E(\FF_k,\e\Psi_1\|\cdot\|_{L^2(\lebesgue)},\alpha)$ can be upper bounded independently of $k$, say $E(\FF_k,\e\Psi_1\|\cdot\|_{L^2(\lebesgue)},\alpha)\leq\EE(p,\alpha)$ for all $k\in\N_0$ and some finite constant $\EE(p,\alpha)>0$, Theorem \ref{theo:1} yields 
\[
\left(\E\left[\|\G^{b_0}_t\|_{\mathcal{F}}^p\right]\right)^{\p}
\ \leq\ 3C_\alpha\EE(p,\alpha)+ 6\Psi_1(2p)\a\V+2\frac{\Psi_2p}{\sqrt t}+\sqrt t C\U(1+2\eta\mom)^{\eta}\e^{-\frac{\Lambda(t)}{2\e\mom}}.
\]
Lemma \ref{EntropyIntegralLebesgueMetric} provides such an upper bound $\EE(p,\alpha)$ for $\alpha \in\left\{2/3,1,2\right\}$. 
Furthermore, in a lot of interesting instances (e.g., local time or the statistical application in Section \ref{sec:tp}), the function class $\mathcal F$ is translation invariant, i.e., for any constant $c\in\R$, $f\in\mathcal F$ implies that $f(\cdot + c)\in\mathcal F$. 
In that case, $E(\mathcal F_k,\e\Psi_1\|\cdot\|_{L^2(\lebesgue)},\alpha)$ does \emph{not} depend on $k$, and the finiteness of this quantity entails \eqref{sumentropyintegral}.
\item[(c)] 
Instead of assuming $X$ to be a continuous semimartingale fulfilling the moment bounds \eqref{eq:p} in Assumption \ref{M+V_upper_bound}, one could also work with other classes of processes satisfying a maximal inequality as in Lemma \ref{maximal_inequality_X} and allowing for a martingale approximation with moment bounds as in \eqref{martrep2}.
\end{itemize}
\end{remark}

\medskip

\begin{proof}[Proof of Theorem \ref{theo:1}]
Fix $p\geq 1$. 
The definition of $\Lambda(t)$ (cf.~\eqref{def:lamb}) implies for any $k\in\N$, setting $u=kp$,
\[\e\left(u\phi_1(t)+ \bdg\sqrt u\phi_2(t)\right)\ 
\leq\ kp\e\left(\phi_1(t)+\bdg\phi_2(t)\right)\ \leq\ kp\Lambda(t),\]
and consequently, according to Lemma \ref{maximal_inequality_X},
\begin{equation*}
\P\left(\max_{0\leq s\leq t}|X_s|>kp\Lambda(t)\right)\ \leq\ \exp\left(-kp\right).
\end{equation*}
Furthermore, since $\|X_0\|_p\ \leq\ p\mom$, Lemma \ref{moment_tail_lemma} yields
\begin{equation*}
\P\left(|X_0|\ \geq\ \e\mom\ u\right)\ \leq\ \exp(-u),\quad u\geq 1.
\end{equation*}
Set $A_f\ :=\ \left\{\exists\ s\in[0,t] \,\text{ such that } X_s\in \supp(f)\right\}$, and note that, for $f\in\mathcal{F}_k$, $k\in\N$, 
\[A_f\ \subset\ \left\{\max_{0\leq s \leq t}|X_s|\geq kp\Lambda(t)\right\}\ =:\ A_k,\] 
since, for any $x\in\supp(f)$,
$|x|\ \geq\ 2kp\Lambda(t) - \mathcal S\ \geq\ 2kp\Lambda(t) - kp\Lambda(t)\ =\ kp\Lambda(t)$.
Let $\mathcal{F}_0\c:=\cup_{k=1}^\infty \FF_k$. 
Note that, for $f\in\mathcal{F}_0\c$, 
\begin{align*}
\left|\E\left[f(X_0)b_0(X_0)\right]\right|&\leq\ C\|f\|_\infty \E\left[(1+|X_0|)^\eta\mathds{1}\left\{|X_0|\geq k p\Lambda(t)\right\}\right]\\
&\leq\ C\|f\|_\infty \left(\E\left[(1+|X_0|)^{2\eta}\right]\right)^{1/2}\ \left(\P\left(|X_0|\geq k p\Lambda(t)\right)\right)^{1/2}\\
&\leq\ C\|f\|_\infty(1+2\eta\mom)^{\eta}\exp\left(-\frac{\Lambda(t)}{2\e\mom}\right).
\end{align*}
Consequently, it holds 
$\sqrt t\left|\E\left[f(X_0)b_0(X_0)\right]\right|\leq \sqrt t C\U(1+2\eta\mom)^{\eta}\exp\left(-\frac{\Lambda(t)}{2\e\mom}\right)$.
We thus obtain the following decomposition:
\begin{align*}
\left(\E\left[\|\G_t^{b_0}\|_{\mathcal{F}}^p\right]\right)^{\p}
&\leq\ \left(\E\left[\|\G_t^{b_0}\|_{\mathcal{F}_0}^p\right]\right)^{\p}\ + \ \left(\E\left[\|\G_t^{b_0}\ \mathds{1}(A_f)\|_{\mathcal{F}_0\c}^p\right]\right)^{\p}\ + 
\ \left(\E\left[\|\G_t^{b_0}\ \mathds{1}(A_f\c)\|_{\mathcal{F}_0\c}^p\right]\right)^{\p}\\
&=\ \left(\E\left[\|\G_t^{b_0}\|_{\mathcal{F}_0}^p\right]\right)^{\p}\ +\ \left(\E\left[\|\G_t^{b_0}\ \mathds{1}(A_f)\|_{\mathcal{F}_0\c}^p\right]\right)^{\p}+\sqrt t\|\E\left[f(X_0)b_0(X_0)\right]\|_{\mathcal{F}_0\c}\\
&\leq\ \left(\E\left[\|\G_t^{b_0}\|_{\mathcal{F}_0}^p\right]\right)^{\p}\ + \
\left(\E\left[\|\G_t^{b_0}\ \mathds{1}(A_f)\|_{\mathcal{F}_0^c}^p\right]\right)^{\p}\\
\\
&\hspace*{1.5em} +\sqrt t C\U(1+2\eta\mom)^{\eta}\e^{-\frac{\Lambda(t)}{2\e\mom}}.
\end{align*}
Regarding the first two terms in the last display, note that
\begin{align*}
\left(\E\left[\|\G_t^{b_0}\|_{\mathcal{F}_0}^p\right]\right)^{\p}
&\leq\ \frac{1}{\sqrt{t}}\left\{\left(\E_b\left[\|\Ma_t^f\|_{\mathcal{F}_0}^p\right]\right)^{\p}+\left(\E\left[\|\rd_t^{f}\|_{\mathcal{F}_0}^p\right]\right)\p\right\},\\
\left(\E\left[\|\G_t^{b_0}\ \mathds{1}(A_f)\|_{\mathcal{F}_0\c}^p\right]\right)^{\p}
&\leq\ \frac{1}{\sqrt{t}} \left\{\left(\E\left[\|\Ma_t^f\ \mathds{1}(A_f)\|_{\mathcal{F}_0\c}^p\right]\right)\p+\left(\E\left[\|\rd_t^{f}\|_{\mathcal{F}_0\c}^p\right]\right)\p\right\}.
\end{align*}
Thus, 
\begin{equation}\label{eq:a1}
\left(\E\left[\|\G^{b_0}_t\|_{\mathcal{F}}^p\right]\right)^{\p}\ \leq\ A\ +\ B\ +\ \sqrt t C\U(1+2\eta\mom)^{\eta}\exp\left(-\frac{\Lambda(t)}{2\e\mom}\right),
\end{equation} 
where
\begin{align*}
A\ :=\ \frac{1}{\sqrt{t}}\left\{\left(\E\left[\|\Ma_t^f\|_{\mathcal{F}_0}^p\right]\right)^{\p} + \left(\E\left[\|\Ma_t^f\mathds{1}(A_f)\|_{\mathcal{F}_0\c}^p\right]\right)^{\p}\right\},\quad
B\ :=\ \frac{2}{\sqrt{t}}\left(\E\left[\|\rd_t^{f}\|_{\mathcal{F}}^p\right]\right)^{\p}.
\end{align*}
Assumption \eqref{martrep2} implies that, for any $f, g\in\FF_k$, $\|\Ma_t^f\|_p\leq \Psi_1\sqrt t p^{1/\alpha}\V$, and the following tail estimate,
\[\P\left(|t^{-1/2}(\Ma_t^f-\Ma_t^g)| \geq  d_2(f,g)u\right)\ \leq\ \exp\left(-u^\alpha\right),\quad u\geq 1,\]
where $d_2(f,g):=\e\Psi_1\|f-g\|_{L^2(\lebesgue)}$.
Proposition \ref{thm:dirk} then yields, for any $k\in\N_0$, $q\geq 1$,
\begin{align}\nonumber
\frac{1}{\sqrt{t}}\left(\E\left[\|\Ma_t^f\|_{\mathcal{F}_k}^q\right]\right)^{\frac{1}{q}}
&\leq\ C_\alpha \int_0^\infty \left(\log N(u,\mathcal F_k, d_2)\right)\a\d u + \frac{2}{\sqrt t}\sup_{f\in\mathcal F_k}\|\Ma_t^f\|_q\\\label{eq:a2}
&\leq\ C_\alpha \int_0^\infty \left(\log N(u,\mathcal F_k, d_2)\right)\a\d u + 2\Psi_1 q\a\V,
\end{align}
and, for all $k\in\N$,
\begin{align}\nonumber
\frac{1}{\sqrt t}\left(\E\left[\|\Ma_t^f\mathds{1}(A_f)\|^p_{\mathcal{F}_0\c}\right]\right)^{\p}
&\leq\ \sum_{k=1}^\infty \frac{1}{\sqrt t}\left(\E\left[\|\Ma_t^f\mathds{1}(A_k)\|^p_{\mathcal{F}_k}\right]\right)^{\p}\\\nonumber
&\leq\ \sum_{k=1}^\infty \frac{1}{\sqrt t}\left(\E\left[\|\Ma_t^f\|^{2p}_{\mathcal{F}_k}\right]\right)^{\frac{1}{2p}} \P(A_k)^{\frac{1}{2p}}\\\nonumber
&\leq\ \sum_{k=1}^\infty \frac{1}{\sqrt t}\left(\E\left[\|\Ma_t^f\|^{2p}_{\mathcal{F}_k}\right]\right)^{\frac{1}{2p}} \exp\left(-\frac{k}{2}\right)\\\label{eq:a3}
&\leq\ \sum_{k=1}^{\infty }\left[C_\alpha \int_0^\infty \left(\log N(u,\mathcal F_k, d_2)\right)\a\d u\e^{-\frac{k}{2}}\right] + 4\Psi_1 (2p)\a\V.
\end{align}
Finally, the announced moment bound follows from \eqref{eq:a1}, \eqref{eq:a2}, \eqref{eq:a3} and \eqref{martrep2}. 
\end{proof}

\bigskip

\section{Concentration of measure and exponential inequalities for scalar ergodic diffusions}\label{sec:cmei}
The original motivation for the present study was the question of deriving exponential inequalities for diffusion processes and associated additive functionals as they are constantly used for investigating (adaptive) statistical procedures. 
The current analysis has a much wider scope, and the results and methods of proof actually apply in a much more general framework. 
However, for clarity of presentation and in order not to lose the main ideas, we focus in the sequel on a specific class of diffusion processes. 
The results of this section take up those established in Section \ref{sec:2} (for local times) and Section \ref{sec:3} (for empirical processes) for the specific diffusion setting.
In Section \ref{subsec:4.3}, we even go one step further and establish a concentration result for generalised empirical processes that involve stochastic integrals.
We start with introducing our basic class of diffusion processes.

\medskip

\begin{definition}\label{def:B}
Let $\sigma\in \operatorname{Lip}_{\operatorname{loc}}(\R)$ and assume that, for some constants $\overline{\nu},\underline\nu\in(0,\infty),$  $\sigma$ satisfies $\underline\nu\leq \left|\sigma(x)\right|\leq \overline\nu$ for all $x\in\R$ . For fixed constants $A,\gamma>0$ and $\C\geq 1$, define the set $\Sigma=\Sigma(\C,A,\gamma,\sigma)$ as
\begin{equation}\label{eq:sigma}
\Sigma\ :=\ \Big\{b \in \operatorname{Lip}_{\operatorname{loc}}(\R)\colon|b(x)| \leq\C(1+|x|),\ \forall|x|>A\colon \frac{b(x)}{\sigma^2(x)}\operatorname{sgn}(x)\leq -\gamma\Big\}.
\end{equation}
\end{definition}

Given $\sigma$ as above and any $b\in\Sigma$, there exists a unique strong solution of the SDE \eqref{intro} with ergodic properties and invariant density
\begin{equation}\label{eq:invdens}
\rho(x)=\rho_b(x)\ :=\ \frac{1}{C_{b,\sigma}\sigma^2(x)}\ \exp\left(\int_0^x\frac{2b(y)}{\sigma^2(y)}\d y\right),\quad x\in\R,
\end{equation}
with $C_{b,\sigma}:=\int_\R\frac{1}{\sigma^2(u)}\ \exp\left(\int_0^u\frac{2b(y)}{\sigma^2(y)}\d y\right)\d u$ denoting the normalising constant.  The invariant measure of the corresponding distribution and its distribution function will be denoted by $\mu=\mu_b$ and $F=F_b$, respectively, and we assume that the process is started in the equilibrium, i.e., $\xi\sim\mu_b$.

\smallskip

Our assumptions on the diffusion characteristics already impose some regularity on the invariant density $\rho_b$. 
More precisely, for any $b\in\Sigma(\C,A,\gamma,\sigma)$, $\sigma^2\rho_b$ is continuously differentiable and there exists a constant $\ML >0$ (depending only on $\C,A,\gamma,\underline\nu, \overline\nu$) such that 
\begin{equation}\label{eq:reg_rho_b}
\sup_{b\in\Sigma(\C,A,\gamma,\sigma)}\max\left\{\|\rho_b\|_{\infty},\|(\sigma^2\rho_b)'\|_\infty\right\}\ <\ \ML 
\end{equation}
and, for any $\theta>0$, we have 
$\sup_{b\in\Sigma(\C,A,\gamma,\sigma)}\sup_{x\in\R}\left\{|x|^{\theta}\rho_b(x)\right\} <\infty$.
The analysis of the moments of functionals of the process $X$ relies on upper bounds for the moments of the invariant measure. 
For any diffusion process $X$ as in Definition \ref{def:B}, it holds 
\begin{equation}\label{moment_bounds_invariant_measure}
\sup_{b\in\Sigma(\C,A,\gamma,\sigma)}\|X_0\|_p\ = \ \sup_{b\in\Sigma(\C,A,\gamma,\sigma)} \left(\E_b\left[|X_0|^p\right]\right)\p\ \leq\ \mom p,\quad p\geq 1,
\end{equation}
(cf.~Lemma \ref{X_0_Mom} in Appendix \ref{app:A}) for some positive constant $\mom$.
The above estimates will be used in the sequel without further notice.

\medskip

\begin{remark}\label{rem:bern}
A natural approach for analysing the supremum of processes of the form
\[\frac{1}{t}\int_0^tg(X_s)\d s\quad\text{ or } \quad\frac{1}{t}\int_0^t g(X_s)\d X_s\]
over entire function classes consists in making use of well-known concentration results for additive diffusion functionals.
For any nice diffusion $\X$ fulfilling Poincar\'e's inequality, it is actually known that, for any \emph{bounded} function $g\colon\R\to\R$, one has a Bernstein-type tail estimate of the form
\begin{equation}\label{ineq:bern}
\P\left(\frac{1}{t}\int_0^t\left(g(X_s)-\E[g(X_0)]\right)\d s>r\right)\ \leq\ \exp\left(-\frac{tr^2}{2(\Var(g)+c_P\|g\|_\infty r)}\right),
\end{equation}
for $t,r>0$ and $c_P$ denoting the Poincar\'e constant.
Given any class $\GG$ of bounded functions $g\colon\R\to\R$ fulfilling \eqref{ineq:bern}, the above inequality implies that the process $(\G_t(g))_{g\in\GG}$ exhibits a mixed tail behaviour wrt the metrics $d_1(g,g'):=\|g-g'\|_\infty$ and $d_2(g,g'):=\Var(g-g')$.
Chaining procedures as they are used, e.g., for proving Theorem \ref{theo:lt} then can be applied to obtain upper bounds of the form
\begin{equation}\label{eq:entbern}
\left(\E\left[\left(\sup_{g\in\GG}|\G_t(g)|\right)^p\right]\right)\p\ \lesssim\ \frac{1}{\sqrt t}\int_0^\infty\log N(\ep,\GG,d_1)\d\ep
+\int_0^\infty\sqrt{\log N(\ep,\GG,d_2)}\d\ep+\sqrt p+\frac{p}{\sqrt t}.
\end{equation}
However, for any bounded $g\in\GG$, one can also derive a decomposition of the form \eqref{eq:martapp} where both the martingale part $M_t(g)$ and the remainder term $R_t(g)$ can be controlled similarly to the local time case. 

We do not want to restrict to bounded drift terms $b\colon\R\to\R$. 
For analysing term $(\mathbf{I})$ in \eqref{def:A}, one thus actually requires results for \emph{unbounded} functions $g=fb$.
Using the method of transportation-information inequalities, \cite{ggw10} establish Bernstein-type concentration inequalities in the spirit of \eqref{ineq:bern} for unbounded functions $g\colon\R\to\R$. 
In principle, one might then deduce upper bounds similarly to \eqref{eq:entbern}.
Note however that the results of \cite{ggw10} apply only to a restricted class of diffusion processes.
Furthermore, it is far from clear how the corresponding entropy integrals can be controlled, not to say the finiteness of the rhs of \eqref{eq:entbern} is not at all clear.
\end{remark}

\smallskip

In view of the aforementioned obstacles, we return to the alternative approach of proving concentration results via martingale approximation.
In the sequel, we will specify the components of the decomposition \eqref{eq:martapp} and derive upper bounds on the moments of the martingale and the remainder term for a broad class of ergodic diffusion processes.

\subsection{Moment bounds and tail estimates for diffusion local time}\label{mom:difflt}
We start with revisiting our result on local time and specifying it for the case of diffusion processes as introduced in Definition \ref{def:B}.
Thus, we consider the diffusion local time process $(L_t^a(X))_{a\in\R}$, $t\geq 0$, which is continuous in $a$ and $t$. 

\paragraph{Bounding the moments of $\|L_t^\bullet\|_\infty$ by means of Theorem \ref{theo:lt}}
In order to deduce a result by means of Theorem \ref{theo:lt}, we first argue that Assumption \ref{M+V_upper_bound} is satisfied for any process $X$ as in Definition \ref{def:B}.
Indeed, the finite variation part in this set-up is given by the integrated drift term, i.e., $V_t=\int_0^t b(X_s)\d s$.
We thus obtain for the total variation process $\int_0^t|\d V_s|\leq \int_0^t|b(X_s)|\d s$  $\forall \, t\geq 0$. 
From the moment bounds of the invariant measure \eqref{moment_bounds_invariant_measure} and the at-most-linear-growth condition on $b\in\Sigma(\C,A,\gamma,\sigma)$, one might deduce that
\begin{eqnarray*}
\|X_0\|_p +\|X_t\|_p + \Big\|\int_0^t|\d V_s|\Big\|_p\ \leq\ 2\mom p+t\mathbb C(1+\mom p)\ \leq\ 4 pt\mathbb C(1+\mom).
\end{eqnarray*}
Furthermore, $\left(\E_b[\langle\int_0^\bullet \sigma(X_s)\d W_s \rangle_t^{p/2}]\right)^{1/p}\leq\overline\nu\sqrt t$. 
Thus, setting $\phi_1(t):=\max(4\mathbb C(1+\mom),\overline\nu^2)t$ and $\phi_2(t):=\overline\nu\sqrt t$, Assumption \ref{M+V_upper_bound} is fulfilled.
The function $t\mapsto\Lambda(t)$ from Theorem \ref{theo:lt} and Theorem \ref{theo:1} takes the form
\[
\Lambda(t)\ :=\ \lambda\e\left(\max(4\mathbb C(1+\mom),\overline\nu^2)t+\bdg\overline \nu\sqrt t\right),
\]
with $\lambda>1$ such that $\max\{\mathcal S,\e \mom\}<\lambda \e (\max(4\mathbb C(1+\mom),\overline\nu^2)+\bdg\overline\nu).$ 
Letting \[\Lambda\ :=\ \lambda \e (\max(4\mathbb C(1+\mom),\overline\nu^2)+\bdg\overline\nu),\] it holds $\Lambda(t)\leq \Lambda t$, $t\geq 1$, and all the previous proofs also work for $\Lambda t$ instead of $\Lambda(t)$ which we use in the following without further notice. 
Given these estimates, Corollary 9.10 in \cite{legall16} now gives, for any $a\in\R$, $p\geq 1$ and $t>0$,
\[
\left(\E_b\left[\left(L_t^a(X)\right)^p\right]\right)\p \ \leq\ \tilde c_p\left(pt+\sqrt t\right),
\]
$\tilde c_p$ some (unspecified) positive constant depending on $p$.
Application of Theorem \ref{theo:lt} yields the $\sup$-norm counterpart, namely, the following result for the supremum of diffusion local time. 

\medskip

\begin{corollary}\label{cor:local_time} Let $X$ be a diffusion process as in Definition \ref{def:B}. 
Then, there is a positive constant $\kappa $ (not depending on $p$) such that, for any $p,u,t\geq 1$,
\begin{align*}
\sup_{b\in\Sigma(\C,A,\gamma,\sigma)}\left(\E_b\left[\left\|L_t^\bullet(X)\right\|_\infty^p\right]\right)^{\frac{1}{p}}&\leq \kappa\left(pt + \sqrt{pt} + \sqrt t
\log t\right),\\
\P_b\left(\left\|L_t^\bullet(X)\right\|_\infty\geq \e\kappa\left(u t + \sqrt{ut}+ \sqrt t\log t\right)\right)&\leq\exp(-u).
\end{align*}
\end{corollary}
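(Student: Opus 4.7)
The proof will be a direct application of Theorem \ref{theo:lt} to the diffusion setting, for which the structural hypotheses have already been checked in the paragraphs preceding the corollary. Thus the plan essentially reduces to bookkeeping of constants and a clean simplification of the entropy-type bound produced by Theorem \ref{theo:lt}.

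First I would recall the setup verified just above the corollary: for any $X$ as in Definition \ref{def:B}, the canonical decomposition is $X = X_0 + W + V$ with $V_t = \int_0^t b(X_s)\,\d s$. Using $b \in \Sigma(\C,A,\gamma)$ (hence $|b(x)|\leq \C(1+|x|)$), the invariant-measure moment bound \eqref{moment_bounds_invariant_measure} and stationarity, one obtains
\[
\|X_0\|_p + \|X_t\|_p + \Big\|\int_0^t |\d V_s|\Big\|_p \ \leq\ 4pt\,\mathbb C(1+\mom),
\]
while $(\E[\langle W\rangle_t^{p/2}])^{1/p} = \sqrt{t}$. Hence Assumption \ref{M+V_upper_bound} is satisfied with $\phi_1(t) = 4\mathbb C(1+\mom)\,t$ and $\phi_2(t) = \sqrt t$, which clearly satisfy $\phi_2(t)\leq\sqrt{\phi_1(t)}$ and $\phi_1(t)\to\infty$. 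In addition, the function $\Lambda(t)$ from Theorem \ref{theo:lt} is bounded above by $\Lambda t$ for $t\geq 1$, where $\Lambda := \lambda\mathrm e(4\C(1+\mom)+\bdg)$.

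Next, I plug these choices into Theorem \ref{theo:lt}. It gives, for any $p\geq 1$ and $t\geq 1$,
\[
\bigl(\E_b[\|L_t^\bullet(X)\|_\infty^p]\bigr)^{1/p} \leq \kappa\Bigl( 4\mathbb C(1+\mom)\,pt + \sqrt p\,\sqrt t + \bigl(\sqrt{4\mathbb C(1+\mom)\,t} + t^{1/4}\bigr)\log(2p\Lambda t)\Bigr).
\]
Using $t\geq 1$, the bracket $(\sqrt{\phi_1(t)} + \sqrt{\phi_2(t)})$ is of order $\sqrt t$, and $\log(2p\Lambda t) \leq \log(2\Lambda) + \log p + \log t$. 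The contribution $\sqrt t\,\log p$ is absorbed into $pt$ (since $\log p \leq p$ and $t\geq 1$), and the constant $\log(2\Lambda)\sqrt t$ is absorbed into $\sqrt t\,\log t$ for $t\geq \mathrm e$ (up to enlarging $\kappa$). The remaining $\sqrt t\,\log t$ term survives as announced. This produces the moment bound
\[
\bigl(\E_b[\|L_t^\bullet(X)\|_\infty^p]\bigr)^{1/p}\ \leq\ \kappa\bigl(pt + \sqrt{pt} + \sqrt t\,\log t\bigr),
\]
for a new constant $\kappa$ depending only on $\C,A,\gamma$; its independence of $b\in\Sigma(\C,A,\gamma)$ follows from the fact that $\phi_1,\phi_2,\Lambda$ were chosen uniformly over this class.

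Finally, the tail inequality is obtained from the moment bound via Lemma \ref{moment_tail_lemma}: specialising $p=u\geq 1$ in the above display and invoking the standard moments-to-tail conversion yields
\[
\P_b\bigl(\|L_t^\bullet(X)\|_\infty \geq \mathrm e\,\kappa\,(ut + \sqrt{ut} + \sqrt t\,\log t)\bigr)\ \leq\ \mathrm e^{-u}.
\]
The only step that requires genuine care is the simplification of the entropy-type term in Theorem \ref{theo:lt}: one must verify that the logarithmic factor $\log(2p\Lambda(t))$, after substituting the explicit $\phi_1,\phi_2$, collapses to $\log t$ modulo terms already dominated by $pt$ and $\sqrt{pt}$. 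Everything else is a direct substitution.
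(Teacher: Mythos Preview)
Your proposal is correct and follows exactly the approach the paper takes: the corollary is stated as a direct application of Theorem \ref{theo:lt} once Assumption \ref{M+V_upper_bound} has been verified with $\phi_1(t)=4\mathbb C(1+\mom)t$ and $\phi_2(t)=\sqrt t$ (and $\Lambda(t)\leq\Lambda t$), and you have correctly carried out the substitution and simplification. One small cleanup: for $1\le t<\mathrm e$ the term $\log(2\Lambda)\sqrt t$ should be absorbed into $\sqrt{pt}$ (since $p\ge1$) rather than $\sqrt t\,\log t$, but this is immediate.
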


\medskip

\subsection{Martingale approximation for additive functionals of diffusion processes}\label{sec:4.2}
We now specify our analysis of empirical processes $(\G_t^{b_0}(f))_{f\in\FF}$ as introduced in \eqref{eq:g} to the ergodic diffusion case.
Given some function class $\FF$, denote $\overline \FF:=\left\{g-h\colon g,h\in\FF\right\}$.

\medskip

\begin{proposition}\label{prop:mom}
Let $\X$ be a diffusion as in Definition \ref{def:B}.
Then, for any continuous function $b_0$ fulfilling $|b_0(x)|\leq C(1+|x|^\eta)$, $C,\eta\geq 0$ some fixed constants, and any class $\mathcal F$ of continuous functions $f\colon\R\to\R$ fulfilling Assumption \ref{ann:1}, there exists a representation
\begin{equation}\label{eq:martre}
\G^{b_0}_t(f)\ =\ t^{-1/2}\Ma^{f}_t\ + \ t^{-1/2}\rd_t^{f},\quad t> 0,
\end{equation}
satisfying, for any $f,g\in\mathcal F$, $\Ma_t^{f-g}=\Ma_t^f - \Ma_t^g$. In addition, for any $p\geq1$ and any $f\in\FF\cup\overline{\FF}$,
\begin{align}\tag{$\mathbf{I}$}
\begin{split}
\left(\E_b\left[|\Ma_t^{f}|^p\right]\right)\p&\leq\ (2p)^{\eta +1/2}\sqrt {t\mathcal S}\|f\|_{L^2(\lebesgue)}\ \overline\nu\bdg \left(1+(\mom\eta)^\eta\right)\overline{\Lambda}_{\operatorname{\prox}},\\
\left(\E_b\left[\left(\sup_{f\in\FF}|\rd^{f}_t|\right)^p\right]\right)\p &\leq\ p^{\eta+1}\mathcal S\ 4\max\left\{\mom^{\eta+1}, 1\right\}(\eta+1)^{\eta}\overline\Lambda_{\operatorname{\prox}},
\end{split}
\end{align}
with 
\begin{eqnarray}\nonumber
\overline{\Lambda}_{\operatorname{\prox}}^2&:=&16C^2\ML  C_{b,\sigma}^2 \e^{2\underline\nu^{-2}\C(2A+A^2)}(1+\sup_{x\in\R}|x|^{2\eta}\rho_b(x))\\\label{def:lprox}
&&\hspace*{3em}+ 4C^2\max\{2^{2\eta},2\}\bigg(2K^2\ML (1+\sup_{x\in\R} |x|^{2\eta}\rho_b(x))\\\nonumber
&&\hspace*{3em}+\underline\nu^{-2}\sup_{x\geq 0}\exp(-4\gamma x)x^{2\eta}+\underline\nu^{-2}\bigg),
\end{eqnarray}
for some constant $K=K(\C,A,\gamma,\overline\nu,\underline\nu)$.
For the particular case $b_0=b$, the representation satisfies, for any $p\geq 1$ and any $f\in\FF\cup\overline{\FF}$,
\begin{align}\tag{$\mathbf{II}$}
\begin{split}
\left(\E_b\left[|\Ma_t^{f}|^p\right]\right)\p&\leq\ p\sqrt t\overline{\Gamma}_{\operatorname{\prox}} \|f\|_{L^2(\lebesgue)} \sqrt 2\overline\nu\bdg (1+\mathcal S+\mom)^{1/2},\\
\left(\E_b\left[\left(\sup_{f\in\FF}|\rd^{f}_t|\right)^p\right]\right)\p &\leq\ p \Gamma_{\operatorname{\prox}},
\end{split}
\end{align}
with 
\begin{align*}
\overline{\Gamma}^2_{\operatorname{\prox}}&:=8\left\{\frac{1}{4}K^2\ML^2+\frac{\underline\nu^{-2}\mathbb C}{2}\left(1 + \sup_{x\geq 0}\exp(-4\gamma x)x \right)+\ML^2 C^2_{b,\sigma} \e^{2\underline\nu^{-2}\C(2A+A^2)}\right\},\\
\Gamma_{\prox}&:=4U\mom\left(2K\left(2\overline\nu^2\ML  + \mathbb C(1+A)\right) + 2C_{b,\sigma}\e^{\underline\nu^{-2}\C(2A+A^2)}\left(A \mathbb C(1+A)\ML +\frac{\overline\nu^2\ML}{2}\right)\hspace*{-0.1em}+\hspace*{-0.1em}1\hspace*{-0,1em}\right)\hspace*{-0.1em}.
 \end{align*}
\end{proposition}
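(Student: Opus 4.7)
The plan is to construct, for each $f\in\FF\cup\overline{\FF}$, a solution of the Poisson equation associated with the diffusion generator and then split $\G^{b_0}_t(f)$ via Itô's formula. Setting $\psi_f:=fb_0-\mu_b(fb_0)$ and exploiting the scale/speed relation $\rho_b'/\rho_b=2b$, one checks directly that
\[
G_f(x)\ :=\ \int_{-\infty}^x\psi_f(y)\rho_b(y)\d y,\qquad F_f(x)\ :=\ -2\int_0^x\frac{G_f(y)}{\rho_b(y)}\d y
\]
solves $b F_f'+\tfrac12 F_f''=-\psi_f$, with $G_f(\pm\infty)=0$ by the centering $\mu_b(\psi_f)=0$. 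Itô's formula applied to $F_f(X_t)$ then gives
\[
\int_0^t\psi_f(X_s)\d s\ =\ F_f(X_0)-F_f(X_t)+\int_0^t F_f'(X_s)\d W_s,
\]
which is the decomposition \eqref{eq:martre} with $\Ma_t^f:=\int_0^t F_f'(X_s)\d W_s$ and $\rd_t^f:=F_f(X_0)-F_f(X_t)$. Linearity of $f\mapsto F_f$ immediately yields $\Ma_t^{f-g}=\Ma_t^f-\Ma_t^g$ and takes care of the extension to $\overline\FF$.

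For case $(\mathbf I)$, the martingale term is controlled via the sharp BDG inequality \eqref{eq:baryor}, so it remains to bound $\|\int_0^t F_f'(X_s)^2\d s\|_{p/2}$. The key step is the pointwise estimate
\[
|F_f'(x)|\ =\ \frac{2|G_f(x)|}{\rho_b(x)}\ \leq\ c\,\sqrt{\SS}\,\|f\|_{L^2(\lebesgue)}\,(1+|x|^\eta)\,\Bigl(1+\tfrac{\min(F_b(x),1-F_b(x))}{\rho_b(x)}\Bigr),
\]
obtained by distinguishing three ranges of $x$ relative to $[x_f,x^f]$: on $\supp f$ a Cauchy--Schwarz bound together with the uniform estimate $(1+|y|^\eta)\rho_b(y)\lesssim 1$ from $b\in\Sigma(\C,A,\gamma)$ gives $|G_f(x)|\leq c\sqrt{\SS}\,\|f\|_{L^2(\lebesgue)}$; off $\supp f$ the integrand degenerates to $\pm\mu_b(fb_0)F_b(x)$ or $\pm\mu_b(fb_0)(1-F_b(x))$, where $|\mu_b(fb_0)|$ satisfies the same Cauchy--Schwarz bound. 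Ergodicity supplies the uniform estimate $\min(F_b,1-F_b)/\rho_b\leq K$ with $K=\tfrac1{2\gamma}+\tfrac{\e^{2\C A(1+A)}}{2\C(1+A)}$, which is exactly the constant visible in $\overline{\Lambda}_{\prox}$. Passing to the $p/2$-moment via Minkowski and stationarity ($\E_b|F_f'(X_0)|^p=\int|F_f'|^p\rho_b$) and using \eqref{moment_bounds_invariant_measure} to absorb the factor $(1+|X_0|^\eta)$ yields the claimed $(2p)^{\eta+1/2}\sqrt{t\SS}\,\|f\|_{L^2(\lebesgue)}$ bound on $\|\Ma_t^f\|_p$. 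The remainder is handled by stationarity, $\|\rd_t^f\|_p\leq 2\|F_f(X_0)\|_p$; integrating the pointwise bound on $|F_f'|$ over an interval of length $\leq\SS$ (using $\|f\|_\infty\leq\U$) provides a bound on $\|F_f\|_\infty$ uniform in $f\in\FF$, producing the $p^{\eta+1}\SS$ estimate.

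The sharper case $(\mathbf{II})$ with $b_0=b$ exploits the identity $b\rho_b=\tfrac12\rho_b'$, which permits rewriting $\int_{-\infty}^x fb\rho_b\d y=\tfrac12\int_{-\infty}^x f\d\rho_b$ and hence estimating $|G_f(x)|$ without the polynomial factor $(1+|x|^\eta)$ responsible for the $(2p)^{\eta+1/2}$ loss in $(\mathbf I)$; the regularity $\|\rho_b'\|_\infty\leq\ML$ from \eqref{eq:reg_rho_b} is what enables this step. Re-running the BDG/stationarity argument with this improved pointwise control then yields the linear-in-$p$ moment bound and the explicit constants $\overline{\Gamma}_{\prox},\Gamma_{\prox}$ stated in the proposition. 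The principal difficulty throughout is the careful bookkeeping in these pointwise estimates: tracking the $\|f\|_{L^2(\lebesgue)}$-dependence (rather than $\|f\|_{L^2(\mu_b)}$ or $\|f\|_\infty$), separating the on-support and off-support contributions to $G_f$, and keeping every constant explicit in $\C,A,\gamma,\eta$ so that the final moment inequalities match the precise form of $\overline{\Lambda}_{\prox},\overline{\Gamma}_{\prox}$ and their counterparts.
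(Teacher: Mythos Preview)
Your construction via the Poisson equation and the application of It\^o's formula coincide exactly with the paper's approach: your $F_f'$ equals $-\h^{fb_0}$ in the paper's notation, and the martingale/remainder decomposition is the same. The treatment of the martingale part via the sharp BDG inequality, the Cauchy--Schwarz pointwise bound on $|F_f'|$ distinguishing on-support and off-support contributions, and the use of $\min(F_b,1-F_b)/\rho_b\le K$ all match the paper. The improved case $(\mathbf{II})$ via $b\rho_b=\tfrac12\rho_b'$ is also the same idea.

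There is, however, a genuine gap in your remainder argument for case $(\mathbf I)$. You write that one bounds $\|F_f\|_\infty$ by ``integrating the pointwise bound on $|F_f'|$ over an interval of length $\leq\SS$''. This is incorrect: $F_f'=-2G_f/\rho_b$ is \emph{not} supported on $\supp(f)$ (off the support one has $G_f(x)=\pm\mu_b(fb_0)\min(F_b(x),1-F_b(x))\neq0$), and in fact your own pointwise estimate gives $|F_f'(u)|\lesssim\SS(1+|u|^\eta)$, so $|F_f(x)|=|\int_0^x F_f'|$ grows like $|x|^{\eta+1}$ and $\|F_f\|_\infty=\infty$ whenever $\eta>0$. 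The paper instead bounds the remainder directly as
\[
\Big|\int_{X_0}^{X_t}\h^{fb_0}(u)\d u\Big|\ \leq\ \overline\Lambda_{\prox}\,\SS\int_{X_0}^{X_t}(1+|u|^\eta)\d u\ \lesssim\ \SS\big(|X_0|+|X_t|+|X_0|^{\eta+1}+|X_t|^{\eta+1}\big),
\]
and then takes $L^p$ norms using stationarity and the moment bound $\|X_0\|_q\leq\mom q$, which is what produces the factor $p^{\eta+1}$. Your line ``$\|\rd_t^f\|_p\leq 2\|F_f(X_0)\|_p$'' is fine, but you must follow it with a growth bound on $F_f$ rather than a sup-norm bound. (In case $(\mathbf{II})$ the paper does show $\|\h^{fb}\|_\infty<\infty$ uniformly in $f$, so there $F_f$ has at most linear growth and the argument simplifies; but this relies precisely on the cancellation from $b\rho_b=\tfrac12\rho_b'$ and is not available for general $b_0$.)
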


\medskip

\begin{remark}
The above result should be read carefully.
We consider an arbitrary continuous function $b_0$, not necessarily of compact support, satisfying some polynomial growth condition.
Our interest is in bounding the $p$-th moments of the empirical process $(\G_t^{b_0}(f))_{f\in\FF}$, indexed by the functions $\FF\ni f\colon \R\to\R$.
Neglecting constants, the first approach to analysing the moments of the martingale and of the remainder term shows that, for any $p\geq1$,
\begin{equation}\label{app1}
\frac{1}{\sqrt t}\|\Ma_t^f\|_p\ \lesssim\ p^{\eta+\frac{1}{2}}\sqrt{\mathcal S}\|f\|_{L^2(\lebesgue)},\qquad 
\Big\|\sup_{f\in\FF}|\rd_t^f|\Big\|_p\ \lesssim\ p^{\eta+1}\mathcal S.
\end{equation}
Specifying to the case $b_0=b$, one can exploit the basic relation $(\sigma^2\rho_b)'=2\rho_bb$. 
One then obtains bounds of the order
\begin{equation}\label{app2}
\frac{1}{\sqrt t}\|\Ma_t^f\|_p\ \lesssim\ p\|f\|_{L^2(\lebesgue)},\qquad\quad \Big\|\sup_{f\in\FF}|\rd_t^f|\Big\|_p\ \lesssim\ p.
\end{equation}
Regarding the exponent of $p$, \eqref{app2} is superior to the bound implied by \eqref{app1} for the specific case $\eta=1$ (which corresponds to the standard at-most-linear-growth assumption on the drift term). 
However, it will be seen below that it might be advantageous to choose the upper bound \eqref{app1} with $\eta=1$ for the martingale part.
Note that this bound provides the factor $\sqrt{\mathcal S}$.
In a number of statistical applications (e.g., the procedure that we have in mind), the support of the functions $f$ from the class $\FF$ vanishes.
Consequently, the contribution of the factor $\sqrt \mathcal S$ is more beneficial than the improvement in the tail behaviour implied by \eqref{app2}.
\end{remark}

\subsection{Uniform concentration of empirical processes and stochastic integrals}\label{subsec:4.3}
Consider some diffusion process $\X$ as introduced in Definition \ref{def:B} with invariant measure $\mu_b$, and let us briefly recall our previous outcomes. 
Proposition \ref{prop:mom} gives both a martingale approximation of the empirical process
\[\G_t^{b}(f)\ =\ \frac{1}{\sqrt t}\int_0^t \left(f(X_u)b(X_u)-\E_b[f(X_0)b(X_0)]\right)\d u\]
and bounds on the $p$-th moments of its martingale and remainder term.
Theorem \ref{theo:1} allows to translate these bounds into bounds on $\|\G_t^{b}\|_\FF^p$, $p\geq 1$, the supremum taken over entire function classes $\FF$. 
Combining both results, we obtain the following

\medskip

\begin{theorem}\label{theo:bernsup}
Let $\X$ be as in Definition \ref{def:B}.
Suppose that $\FF$ is a class of continuous functions fulfilling Assumptions \ref{ann:1} and \ref{ann:3}, and set 
$\Lambda:=\lambda \e (\max(4\mathbb C(1+\mom),\overline\nu^2)+\bdg\overline\nu)$,
where $\lambda>1$ is chosen such that $\max\{\mathcal S,\e \mom\}<\lambda \e (\max(4\mathbb C(1+\mom),\overline\nu^2)+\bdg\overline\nu)$.
Then, for any $p\geq 1$,
\begin{equation}\tag{$\mathbf{I}$}
\sup_{b\in\Sigma(\C,A,\gamma,\sigma)}\left(\E_b\left[\|\G^1_t\|_{\mathcal{F}}^p\right]\right)^{\p}
\ \leq\  \Phi_t(p),
\end{equation}
for
\[\Phi_t(u)\ :=\ \V\sqrt{\mathcal S} 
\left\{12C_2\e\ps_1\sqrt{v\log\left(\frac{\AA}{\V}\sqrt{\mathcal S + u\Lambda t}\right)} + 6\ps_1 \sqrt{2u}\right\}+2\mathcal{S}\frac{\ps_2u}{\sqrt t}+ \sqrt t C\U \e^{-\frac{\Lambda t}{2\e\mom}},\]
with $\ps_1:=\sqrt 2\overline\nu\bdg\overline{\Lambda}_{\prox}$ and $\ps_2:=4\max\{\mom,1\}\U \overline{\Lambda}_{\prox}$.
Furthermore,
\begin{equation}\tag{$\mathbf{II}$}
\sup_{b\in\Sigma(\C,A,\gamma,\sigma)}\left(\E_b\left[\|\G^{b}_t\|_{\mathcal{F}}^p\right]\right)^{\p}
\ \leq\ \Ph_t(p),
\end{equation}
where 
\begin{align}\label{def:Ph}
\begin{split}
\Ph_t(u)&:=\V\sqrt{\mathcal S}\Bigg\{3 C_{\frac{2}{3}} \e\ps_1^{b}\left(2\left(v\log \left(\frac{\AA}{\V}\sqrt{\mathcal S + u\Lambda t}\right)\right)^{\frac{3}{2}} +6v^{\frac{3}{2}}\sqrt{\log\left(\frac{\AA}{\V}\sqrt{\mathcal S + u\Lambda t}\right)}\right)\Bigg.\\
&\hspace*{9em} \Bigg.+ 6\ps^b_1 (2u)^{\frac{3}{2}}\Bigg\}+\frac{2\ps^b_2u}{\sqrt t}+ \sqrt t C\U (1+2\mom)\e^{-\frac{\Lambda t}{2\e\mom}}
\end{split}
\end{align}
and $\ps^b_1:=2^{\frac{3}{2}}\overline\nu\bdg\overline{\Lambda}_{\prox} \left(1+\mom\right)$, $\ps^b_2:=\Gamma_{\prox}$.
\end{theorem}

Our interest finally is in formulating exponential inequalities for the process
\begin{equation}\label{def:H}
\H_t(f)=\sqrt t\left(\frac{1}{t}\int_0^t f(X_s)\d X_s-\int(fb)\d\mu_b\right),\quad f\in\FF.
\end{equation}
At this point, we can apply several of our previous findings for proving one first uniform moment bound for the general stochastic integral process $(\H_t(f))_{f\in\FF}$.

\medskip

\begin{proposition}\label{theo:mart}
Grant the assumptions of Theorem \ref{theo:bernsup}. 
Then, there exists a positive constant $\mathbb L$ (depending only on $\bdg,\C,\mom,\Lambda, \U, A, \gamma, v, \AA, \underline\nu, \overline\nu$) such that, for any $p,t\geq 1$,
\[\sup_{b\in\Sigma(\C,A,\gamma,\sigma)}\left(\E_b\left[\|\H_t\|_\FF^p\right]\right)\p
\ \leq\ \mathbb L\left(\V\left(1+ \log\left(\frac{1}{\V}\right)+\log t +  p\right)+ \frac{p}{\sqrt t}+\sqrt t \e^{-\frac{\Lambda t}{2\e\mom}}\right).\]
\end{proposition}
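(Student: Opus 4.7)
The starting point is the decomposition \eqref{def:A}, namely $\H_t(f) = (\mathbf{I}) + (\mathbf{II})$ with $(\mathbf{I}) = \G^b_t(f)$ and $(\mathbf{II})_f := t^{-1/2}\int_0^t f(X_s)\,\d W_s$. By Minkowski's inequality, it suffices to control the $\FF$-suprema of the two pieces separately. For the empirical part $(\mathbf{I})$, Theorem~\ref{theo:bernsup}($\mathbf{II}$) directly yields the explicit bound $\Ph_t(p)$ from \eqref{def:Ph}; absorbing the structural constants $\ps_1^b,\ps_2^b,v,\AA,\mathcal S$ (all depending only on the parameters listed in the statement) into a single constant $\mathbb L$ reduces $\Ph_t(p)$ to the announced shape $\V(1+\log(1/\V)+\log t + p)+p/\sqrt t + \sqrt t \, \e^{-\Lambda t/(2\e\mom)}$.

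The bulk of the work concerns $\sup_{f\in\FF}|(\mathbf{II})_f|$, for which my plan is exactly the route sketched on p.~\pageref{def:A}: apply the sharp Burkholder--Davis--Gundy inequality \eqref{4.2} (giving a constant of order $\sqrt p$), then the occupation times formula and H\"older's inequality to extract the $L^2(\lebesgue)$-norm of $f$,
\[
\|(\mathbf{II})_f\|_p \;\leq\; \bdg\sqrt{p}\,t^{-1/2}\|f\|_{L^2(\lebesgue)}\,\bigl(\E_b\bigl[\|\L_t^\bullet(X)\|_\infty^{p/2}\bigr]\bigr)^{1/p}.
\]
Corollary~\ref{cor:local_time} applied with exponent $p/2$ bounds the local time factor by a multiple of $\sqrt{pt}+\sqrt{t\log t}\lesssim\sqrt{pt}$ for $p,t\geq 1$, so that $\|(\mathbf{II})_f\|_p \lesssim p\,\|f\|_{L^2(\lebesgue)}$. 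The same computation for $f-g$ yields the increment estimate $\|(\mathbf{II})_f-(\mathbf{II})_g\|_p \lesssim p\,\|f-g\|_{L^2(\lebesgue)}$, i.e.\ a subexponential tail of index $\alpha=1$ with respect to the metric $d(f,g)\propto\|f-g\|_{L^2(\lebesgue)}$.

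With this increment bound, I feed the process $(\mathbf{II})$ into the chaining inequality Proposition~\ref{thm:dirk} (already used in the proof of Theorem~\ref{theo:1}) with $\alpha=1$, combined with the very same localisation scheme $\FF=\bigcup_{k\geq 0}\FF_k$ of \eqref{ikfk} and the maximal inequality of Lemma~\ref{maximal_inequality_X}. For $\FF_0$ this produces a term $C_1 E(\FF_0,d,1) + p\V$, and the tails $\FF_k$ ($k\geq 1$) contribute $\sum_{k\geq 1}E(\FF_k,d,1)\,\e^{-k/2}$ plus the exponential boundary term $\sqrt t\,\e^{-\Lambda t/(2\e\mom)}$, exactly as in \eqref{eq:a1}--\eqref{eq:a3}. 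Under Assumption~\ref{ann:3}, a computation in the spirit of Lemma~\ref{EntropyIntegralLebesgueMetric} (for index $\alpha=1$, using that the $d$-diameter of $\FF_k$ is $\lesssim\V$ and $\SS+k p\Lambda t$ is the relevant volume factor) gives $E(\FF_k,d,1)\lesssim v\V\log(\AA\sqrt{\SS+kp\Lambda t}/\V)$, so that the series converges and produces the $\V(1+\log(1/\V)+\log t)$-contribution in the final bound. Adding the estimates for $(\mathbf{I})$ and $(\mathbf{II})$ and collecting constants into $\mathbb L$ yields the claim. The main obstacle is this last step: because the $(p/2)$-moment of $\|\L_t^\bullet\|_\infty$ grows like $\sqrt{pt}$, the effective subexponential index of the stochastic integral is $\alpha=1$ rather than $\alpha=2$, so we must work with the heavier entropy integral $E(\FF_k,d,1)$; the delicate point is to verify that under Assumption~\ref{ann:3} this integral still produces only a \emph{logarithmic} dependence on $t$ and on $1/\V$, so as not to contaminate the linear-in-$p$ and linear-in-$\log t$ shape of the announced bound.
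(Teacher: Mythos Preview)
Your treatment of the stochastic integral part $(\mathbf{II})$ is essentially the paper's own argument: BDG plus the occupation times formula and Corollary~\ref{cor:local_time} give $\|(\mathbf{II})_f\|_p\lesssim p\,\|f\|_{L^2(\lebesgue)}$, i.e.\ a subexponential increment bound with $\alpha=1$, and the localised chaining of Theorem~\ref{theo:1} together with Lemma~\ref{EntropyIntegralLebesgueMetric} produces the contribution $\V\bigl(1+\log(1/\V)+\log t\bigr)+\V p$.

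The gap is in your handling of the empirical part $(\mathbf{I})$. Quoting Theorem~\ref{theo:bernsup}($\mathbf{II}$) gives you $\Ph_t(p)$ from \eqref{def:Ph}, but this bound does \emph{not} reduce to the announced shape: it contains a term $\V\sqrt{\mathcal S}\,6\ps_1^b(2p)^{3/2}$, i.e.\ order $\V p^{3/2}$, and an entropy term of order $\V(\log t+\log(1/\V))^{3/2}$. Neither can be absorbed into $\mathbb L\bigl(\V(1+\log(1/\V)+\log t+p)+p/\sqrt t+\sqrt t\,\e^{-\Lambda t/(2\e\mom)}\bigr)$, where the $p$- and $\log$-dependence are linear. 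The reason is that Theorem~\ref{theo:bernsup}($\mathbf{II}$) was deliberately built with $\alpha=2/3$ (combining the $\eta=1$ martingale bound from Proposition~\ref{prop:mom}($\mathbf{I}$) with the remainder bound from Proposition~\ref{prop:mom}($\mathbf{II}$)) in order to gain an extra factor $\sqrt{\mathcal S}$; the price is the heavier exponent $1/\alpha=3/2$ on both $p$ and the entropy integral. Here you do not need that $\sqrt{\mathcal S}$, and you cannot afford the $3/2$.

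The paper's proof therefore does \emph{not} invoke Theorem~\ref{theo:bernsup}($\mathbf{II}$) for $(\mathbf{I})$. Instead it returns to Proposition~\ref{prop:mom}($\mathbf{II}$), using the martingale bound $\|\Ma_t^f\|_p\leq \Phi_1\sqrt t\,p\,\|f\|_{L^2(\lebesgue)}$ with $\Phi_1=\sqrt2\,\overline\Gamma_{\prox}\bdg\sqrt{1+\mom+\mathcal S}$ (i.e.\ $\alpha=1$) together with the remainder bound $\|\sup_f|\rd_t^f|\|_p\leq \Gamma_{\prox}\,p$, and feeds these directly into Theorem~\ref{theo:1} with $\alpha=1$. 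Lemma~\ref{EntropyIntegralLebesgueMetric} then gives the $\alpha=1$ entropy integral $\lesssim v\V\bigl(1+\log(\AA\sqrt{\mathcal S+p\Lambda t}/\V)\bigr)$, and the resulting bound $\tilde\Phi_t^b(p)$ has exactly the linear $\V p$ and $\V\log t$ dependence required. Your route through $\Ph_t(p)$ would prove a weaker statement than the one claimed.
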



\bigskip

\section{Statistical applications}\label{sec:tp}
This section considers the basic question of density estimation in supremum-norm which, from a general statistical point of view, is of immense theoretical and practical interest. 
Let us assume that a continuous record of observations $X^t:=(X_s)_{0\leq s\leq t}$ of a diffusion process as introduced in Definition \ref{def:B} is available, and we aim at nonparametric estimation of the associated invariant density $\rho_b$.
Given some smooth kernel function $K\colon \R\to\R$ with compact support, define the standard kernel estimator $\rho_{t,K}(h)$ according to \eqref{est:dens}.
For our statistical analysis which targets results concerning the risk in $\sup$-norm loss, i.e., the behaviour of the maximal error $\|\rho_{t,K}(h)-\rho_b\|_\infty$, we impose some regularity on $b$ and $\rho_b$. 
To be more precise, we look at H\"older classes defined as follows.

\medskip

\begin{definition}\label{def:Hölder}
Given $\beta,\mathcal{L}>0$, denote by $\mathcal{H}_{\R}(\beta,\mathcal{L})$ the \emph{H\"older class} (on $\R$) as the set of all functions $f\colon\R\to\R$ which are $l:=\lfloor \beta \rfloor$-times differentiable and for which
\begin{eqnarray*}
\|f^{(k)}\|_{\infty}&\leq&\mathcal{L}  \qquad\qquad\forall\, k=0,1,...,l,\\
\|f^{(l)}(\cdot+t)-f^{(l)}(\cdot)\|_{\infty}&\leq &\mathcal{L}|t|^{\beta- l}  \qquad \forall\, t\in\R.
\end{eqnarray*}
Set $\Sigma(\beta,\mathcal{L})\ :=\ \left\{b\in\Sigma(\C,A,\gamma,\sigma)\colon\ \rho_b\in\mathcal{H}_\R(\beta,\mathcal{L})\right\}$. 
Here, $\lfloor \beta \rfloor$ denotes the greatest integer strictly smaller than $\beta$.
\end{definition}

Considering the class of drift coefficients $\Sigma(\beta,\mathcal{L})$, we use kernel functions satisfying the following assumptions,
\begin{equation}\label{kernel}
\begin{array}{r@{}l}
&{}\bullet\quad K:\R\rightarrow \R \text{ is Lipschitz continuous and symmetric},\\ [3pt]
&{}\bullet\quad\supp(K)\subset [-1/2,1/2],\\[3pt]
&{}\bullet\quad K\text{ is of order }\lfloor\beta \rfloor.
\end{array}
\end{equation}

\smallskip

\begin{corollary}[Concentration of the kernel invariant density estimator]\label{prop:csi}
Let $\X$ be a diffusion as in Definition \ref{def:B} with $b\in\Sigma(\beta,\mathcal L)$, for some $\beta,\mathcal L>0$, and let $K$ be a kernel function fulfilling \eqref{kernel}.
Given some positive bandwidth $h$, define the estimator $\rho_{t,K}(h)$ according to \eqref{est:dens}. 
Then, there exist positive constants $\nu_1,\nu_2$ (not depending on $p$) such that, for any $p\geq 1$, $t>0$,
\begin{eqnarray*} 
\sup_{b\in\Sigma(\beta,\mathcal L)}\left(\E_b\left[\left\|\rho_{t,K}(h) - \rho_b\right\|^p_\infty\right]\right)\p
&\leq& \frac{\nu_1}{\sqrt t}\left\{1+\sqrt{\log\left(\frac{1}{\sqrt h}\right)} + \sqrt{\log(pt)}+\sqrt p\right\}\\
&&\hspace*{2.5em}+\frac{\nu_2p}{t}+\frac{1}{h}\e^{-\frac{\Lambda t}{2\e\mom}} +h^{\beta }\frac{\mathcal L}{\lfloor \beta\rfloor!}\int |u^{\beta }K(u)|\d u.
\end{eqnarray*}
\end{corollary}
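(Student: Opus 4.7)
\medskip

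The plan is a standard bias--variance decomposition, with the stochastic fluctuation term handled by Theorem \ref{theo:bernsup}(I) applied to a translation class of rescaled kernels. By Minkowski,
\[
\bigl(\E_b\|\rho_{t,K}(h)-\rho_b\|_\infty^p\bigr)^{1/p}\ \leq\ \bigl(\E_b\|\rho_{t,K}(h)-\E_b\rho_{t,K}(h)\|_\infty^p\bigr)^{1/p}\ +\ \|\E_b\rho_{t,K}(h)-\rho_b\|_\infty.
\]
By stationarity, $\E_b[\rho_{t,K}(h)(x)]=\int h^{-1}K((x-y)/h)\rho_b(y)\,\d y$. Since $\rho_b\in\HH_{\R}(\beta,\mathcal L)$ and $K$ is of order $\lfloor\beta\rfloor$, a Taylor expansion of $\rho_b$ around $x$ up to order $\lfloor\beta\rfloor$ together with the vanishing moment conditions on $K$ yields the deterministic bias bound $h^{\beta}\frac{\mathcal{L}}{\lfloor\beta\rfloor!}\int|u^{\beta}K(u)|\,\d u$, which is the last term of the stated inequality.

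For the stochastic fluctuation, set $f_x(y):=K((x-y)/h)$ (the unnormalised kernel, so that $\|f_x\|_\infty\leq\|K\|_\infty$, $\supp(f_x)\subset[x-h/2,x+h/2]$, and $\|f_x\|_{L^2(\lebesgue)}\leq \sqrt h\|K\|_{L^2}$). Using stationarity once more,
\[
\rho_{t,K}(h)(x)-\E_b[\rho_{t,K}(h)(x)]\ =\ \frac{1}{th}\int_0^t\bigl(f_x(X_s)-\E_b[f_x(X_0)]\bigr)\,\d s\ =\ \frac{1}{h\sqrt t}\,\G_t^{1}(f_x).
\]
Continuity of $x\mapsto\rho_{t,K}(h)(x)$ and of $x\mapsto\E_b\rho_{t,K}(h)(x)$ allows us to restrict to a countable dense set, so that the class $\FF:=\{f_x\colon x\in\Q\}$ meets the countability requirement and $\sqrt t\, h\,\|\rho_{t,K}(h)-\E_b\rho_{t,K}(h)\|_\infty=\|\G_t^{1}\|_\FF$.

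Next I would verify the hypotheses of Theorem \ref{theo:bernsup}(I) for $\FF$ with $\U=\|K\|_\infty$, $\SS=h$ and $\V$ proportional to $\sqrt h$ (rescaling $K$ if needed to enforce $\V\leq\sqrt\SS$). Assumption \ref{ann:3} follows because $\FF$ is a translation class generated by a single Lipschitz, compactly supported function: standard arguments give, uniformly in the probability measure $\Q$, a bound $N(\ep,\FF,\|\cdot\|_{L^2(\Q)})\leq(\AA/\ep)^v$. Translation invariance moreover makes $E(\FF_k,\ep\ps_1\sqrt\SS\|\cdot\|_{L^2(\lebesgue)},2)$ independent of $k$, so the series in Theorem \ref{theo:bernsup}(I) is geometric and can be absorbed into the constants. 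Plugging into the definition of $\Phi_t(p)$, the key factor $\V\sqrt\SS$ is of order $h$, while
\[
\log\Bigl(\frac{\AA}{\V}\sqrt{\SS+p\Lambda t}\Bigr)\ \lesssim\ \log(1/\sqrt h)\ +\ \log(pt)\ +\ \mathrm{const},
\]
so $\Phi_t(p)\lesssim h\bigl\{\sqrt{\log(1/\sqrt h)}+\sqrt{\log(pt)}+\sqrt p\bigr\}+h\,p/\sqrt t+\sqrt t\,\U\exp(-\Lambda t/(2\e\mom))$. Dividing by $\sqrt t\,h$ yields precisely the variance contribution in the claim, with the remainder $\sqrt t\,\U\e^{-\Lambda t/(2\e\mom)}/(\sqrt t\,h)=h^{-1}\e^{-\Lambda t/(2\e\mom)}$.

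The main obstacle I anticipate is the verification of Assumption \ref{ann:3} uniformly in $\Q$: one needs to show that the translates of a single Lipschitz, compactly supported function form a uniform VC-type class, and that the $k$-independence of the localised entropies $E(\FF_k,\cdot)$ really makes the series in Theorem \ref{theo:bernsup}(I) converge. Once the entropy is controlled, the remaining work is essentially bookkeeping: matching the pieces of $\Phi_t(p)$ to the four terms on the right-hand side of the corollary, checking that the prefactors combine into constants $\nu_1,\nu_2$ that do not depend on $p$, and controlling the exponential remainder through the choice of $\Lambda$ inherited from Theorem \ref{theo:bernsup}.
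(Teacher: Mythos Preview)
Your proposal is correct and follows essentially the same route as the paper's proof: bias--variance split, Theorem~\ref{theo:bernsup}(I) applied to $\FF=\{K((x-\cdot)/h):x\in\mathbb Q\}$ with $\U=\|K\|_\infty$, $\V=\sqrt h\|K\|_{L^2(\lebesgue)}$ and $\SS\sim h$, then division by $\sqrt t\,h$. The entropy obstacle you flag is exactly what the paper resolves via Lemma~\ref{lem:entgi} (functions of bounded variation, which covers your Lipschitz compactly supported $K$); with that in hand the rest is indeed bookkeeping.
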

\begin{proof}
We want to apply Theorem \ref{theo:bernsup} to the class 
\[\mathcal F:= \left\{K\left(\frac{x-\cdot}{h}\right):\,x\in\mathbb Q\right\}.\]
For doing so, note that $\sup_{f\in\mathcal F}\|f\|_\infty\leq \|K\|_\infty$, $\sup_{f\in\mathcal F}\lebesgue(\supp(f))\leq h$ and 
\begin{eqnarray*}
\left\|K\left(\frac{x-\cdot}{h}\right)\right\|^2_{L^2(\lebesgue)}&=& \int K^2\left(\frac{x-y}{h}\right)\d y\ =\ h \int K^2(z)\d z\ \leq\ h\|K \|^2_{L^2(\lebesgue)}.
\end{eqnarray*}
Setting $\mathcal S := h\ \max\{\|K \|^2_{L^2(\lebesgue)},1\}$, $\V:= \sqrt h\|K \|_{L^2(\lebesgue)}$ and taking into account Lemma \ref{lem:entgi}, $\FF$ is seen to satisfy Assumptions \ref{ann:1} and \ref{ann:3}. Thus, Theorem \ref{theo:bernsup} with $b_0=1$ is applicable. 
In particular, there exist positive constants $\nu_1$ and $\nu_2$ such that, for any $p\geq 1$,
\begin{eqnarray*} 
&&\left(\E_b\left[\left\|\frac{1}{th}\int_0^t K\left(\frac{x-X_u}{h}\right)\d u-\E_b\left[\frac{1}{h}K\left(\frac{x-X_0}{h}\right)\right]\right\|^p_\infty\right]\right)\p\\
&&\hspace*{3em}=\ \frac{1}{\sqrt t h}\left(\E_b\left[\left\|\sqrt t\left\{\frac{1}{t}\int_0^t K\left(\frac{x-X_u}{h}\right)\d u - \E_b\left[K\left(\frac{x-X_0}{h}\right) \right]\right\}\right\|^p_\infty\right]\right)\p\\
&&\hspace*{3em}\leq\ \frac{1}{\sqrt t h}\ h \nu_1\left\{1 +\sqrt{\log\left(\frac{1}{\sqrt h}\right)} + \sqrt{\log(pt)}+\sqrt{p}\right\}+\frac{\nu_2p}{t}+\frac{1}{h}\exp\left(-\frac{\Lambda t}{2\e\mom}\right).
\end{eqnarray*}
For the bias, we obtain
\begin{eqnarray*}
\left|\frac{1}{th}\E_b\left[\int_0^tK\left(\frac{x-X_u}{h}\right)\d u\right] - \rho_b(x)\right|
&=& \left| \frac{1}{h} \int K\left(\frac{x-y}{h}\right)\left(\rho_b(y)-\rho_b(x)\right)\d y \right|\\
&\leq&h^{\beta }\frac{\mathcal L}{\lfloor \beta\rfloor!}\int |u^{\beta }K(u)|\d u.
\end{eqnarray*}
Combining the above estimates, the assertion follows.
\end{proof}

Recall that $\L_t^x(\X)$ denotes diffusion local time and that $\rho_t^\circ(x)=t^{-1}\L_t^x(\X)\sigma^{-2}(x)$ is the associated local time estimator of the value of the invariant density $\rho_b(x)$ of $X$.
We now turn to deriving an exponential inequality for the tail probabilities of $\sqrt t\|\rho_{t,K}(h)-\rho_t^\circ\|_\infty$ which holds under rather mild assumptions on the diffusion $\X$ and the bandwidth $h$. 
It can be interpreted as some analogue of Theorem 1 in \cite{gini09} where the authors investigate the maximum deviation between the classical empirical distribution function (based on i.i.d.~observations) and the distribution function obtained from kernel smoothing.
The proof of Theorem \ref{theo:cath} substantially relies on Proposition \ref{theo:mart}. 
Throughout the sequel, we restrict to a constant diffusion coefficient $\sigma^2\equiv 1$ in order to ease the exposition. 
Still, our methods are suitable to treat more general diffusion coefficients under H\"older-smoothness conditions on $\sigma^2$ that correspond to the conditions on the invariant density.

\medskip

\begin{theorem}\label{theo:cath}
Let $\X$ be a diffusion as in Definition \ref{def:B} with $\sigma^2\equiv 1$ and $b\in\Sigma(\beta,\mathcal L)$, for some $\beta,\mathcal L>0$.
Consider some kernel function $K$ fulfilling \eqref{kernel} and $h=h_t\in(0,1)$ such that $h_t\geq t^{-1}$. 
Then, there exist positive constants $\VV$, $\Lambda$, $\Lambda_0$ and $\co$ such that, for all
\begin{align*}
&\lambda\ \geq\ &\hspace*{-4em}8\Lambda_0\bigg[ \sqrt h \VV\e\mathbb L\left\{1+ \log\left(\frac{1}{\sqrt h \VV}\right)+\log t\right\}+\e\co\sqrt t \exp\left(-\frac{\Lambda t}{2\e\mom}\right)\\
&&\hspace*{2em}+\sqrt t h^{\beta}\frac{\mathcal L}{2\lfloor \beta\rfloor!}\int |K(v)v^{\beta }|\d v\bigg]
\end{align*}
and any $t>1$,
\begin{equation}\label{eq:theo:1}
\sup_{b\in\Sigma(\beta,\LL)} \P_b\left(\sqrt t\left\|\rho_{t,K}(h)-\rho_t^\circ\right\|_\infty>\lambda\right)\ \leq \ \exp\left(-\frac{\Lambda_1\lambda}{\sqrt h}\right).
\end{equation}
\end{theorem}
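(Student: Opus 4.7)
The plan is to write $\sqrt t\,(\rho_{t,K}(h)-\rho_t^\circ)(a)$ as twice a centered stochastic integral in the sense of \eqref{def:H}, plus two small deterministic remainders, and then apply Proposition \ref{theo:mart} to a translation-invariant one-parameter class of functions.

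I would first combine the occupation times formula $\rho_{t,K}(h)(a)=(th)^{-1}\int K((a-y)/h)L_t^y(\X)\,dy$ with the Tanaka identity \eqref{eq:tanaka} applied to each $L_t^y(\X)$, swap the Lebesgue and stochastic integrals, and use the elementary identity
\[
\frac{1}{h}\int K\!\left(\frac{a-y}{h}\right)\mathds{1}\{X_s\leq y\}\,dy\ =\ \bar K\!\left(\frac{a-X_s}{h}\right),\qquad \bar K(z):=\int_{-\infty}^{z}K(v)\,dv.
\]
Together with Tanaka applied directly to $\rho_t^\circ(a)=t^{-1}L_t^a(\X)$, this gives
\[
\sqrt t\,[\rho_{t,K}(h)-\rho_t^\circ](a)=T_1(a)+\frac{2}{\sqrt t}\int_0^t f_a(X_s)\,dX_s,\quad f_a(x):=\bar K\!\left(\tfrac{a-x}{h}\right)-\mathds{1}\{x\leq a\},
\]
where $T_1$ collects the boundary terms $(K_h\ast g_{X_\tau})(a)-g_{X_\tau}(a)$ for $\tau\in\{0,t\}$ and $g_x(y)=(x-y)^-$. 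Since $g_x$ is $1$-Lipschitz and $\supp(K_h)\subset[-h/2,h/2]$, $\|T_1\|_\infty\leq 4h\int|K(v)v|\,dv/\sqrt t$, which is deterministic and dominated by $\sqrt h$ since $h\leq 1$, $t\geq 1$. Integration by parts based on $\rho_b'=2b\rho_b$ then yields
\[
\int f_a(y)b(y)\rho_b(y)\,dy=\tfrac{1}{2}\bigl[(K_h\ast\rho_b)(a)-\rho_b(a)\bigr],
\]
so that $(2/\sqrt t)\int_0^t f_a(X_s)\,dX_s=2\H_t(f_a)+\sqrt t\,[(K_h\ast\rho_b)(a)-\rho_b(a)]$; the second summand is the $\sqrt t$-rescaled kernel bias, bounded uniformly in $a$ by $\sqrt t\,h^\beta\LL/\lfloor\beta\rfloor!\int|K(v)v^\beta|\,dv$ thanks to $\rho_b\in\HH_\R(\beta,\LL)$ and $K$ being of order $\lfloor\beta\rfloor$.

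Next I would apply Proposition \ref{theo:mart} to the countable class $\FF:=\{f_a:a\in\mathbb Q\}$. This class is translation invariant, satisfies Assumption \ref{ann:1} with $\U=1$, $\mathcal S=h$ and $\|f_a\|_{L^2(\lebesgue)}\leq\VV\sqrt h$ for a constant $\VV$ depending only on $K$ (since $|f_a|\leq 1$ and $\supp(f_a)\subset[a-h/2,a+h/2]$), and fulfils Assumption \ref{ann:3} with $h$-independent constants, being a shifted VC-type class (argument analogous to Lemma \ref{lem:entgi}). Proposition \ref{theo:mart} therefore delivers, uniformly in $b\in\Sigma(\beta,\LL)$,
\[
\bigl(\E_b[\sup\nolimits_{a\in\mathbb Q}|\H_t(f_a)|^p]\bigr)\p\ \leq\ A(h,t)+B\sqrt h\,p,\qquad p\geq 1,
\]
with $A(h,t):=\mathbb L\sqrt h\VV\{1+\log(1/(\sqrt h\VV))+\log t\}+\mathbb L\sqrt t\exp(-\Lambda t/(2\e\mom))$ and $B$ depending only on $\mathbb L$ and $\VV$; the bound $p/\sqrt t\leq\sqrt h\,p$ (valid because $h\geq 1/t$) is used to absorb the $p/\sqrt t$ summand coming from Proposition \ref{theo:mart}.

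Finally, I would convert moments to tails by Markov's inequality, optimising over $p$: for $\lambda\geq 2eA(h,t)$ the choice $p=\lambda/(2eB\sqrt h)\geq 1$ yields $\P(\sup_a|\H_t(f_a)|>\lambda/4)\leq\exp(-\lambda/(8eB\sqrt h))$, which is the desired exponential rate in $\lambda/\sqrt h$. Collecting the three contributions and taking $\Lambda_0$ large enough so that the lower bound on $\lambda$ in the hypothesis majorises $8\|T_1\|_\infty$, $8\|\sqrt t(K_h\ast\rho_b-\rho_b)\|_\infty$ and $16eA(h,t)$ simultaneously gives \eqref{eq:theo:1} with $\Lambda_1:=(8eB)^{-1}$. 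I expect the main obstacle to be the uniform (in $h$) verification of the VC-type entropy estimate for $\{f_a\}$ needed to invoke Proposition \ref{theo:mart}; the stochastic Fubini step and the integration-by-parts identity (where the jump of $f_a$ at $x=a$ is absorbed once one integrates $f_a$ against $\rho_b'$ directly) are technical but routine.
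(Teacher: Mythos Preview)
Your strategy is exactly the paper's: the same Tanaka/occupation-times decomposition into boundary terms plus $2t^{-1}\int_0^t f_a(X_s)\,dX_s$ with $f_a=K_h\ast\mathds{1}_{(-\infty,a]}-\mathds{1}_{(-\infty,a]}$, the same identification of the centering $\int f_a b\,d\mu_b$ with the kernel bias, the same application of Proposition~\ref{theo:mart} to the translation-invariant class $\{f_a\}$, and the same conversion of $p$-th moment bounds linear in $p$ (with slope $\sim\sqrt h$) into an exponential tail in $\lambda/\sqrt h$.

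There is, however, one genuine technical obstacle you gloss over. Proposition~\ref{theo:mart} rests on Proposition~\ref{prop:mom}, which explicitly assumes the functions in $\FF$ are \emph{continuous}; continuity is what legitimises the use of It\^o's formula for $G^{fb}(X_t)$ and hence the martingale representation \eqref{eq:martre}. Your $f_a$ carries the jump of $\mathds{1}\{\cdot\leq a\}$, so Proposition~\ref{theo:mart} does not apply as stated. The paper confronts this head-on: it constructs a smooth approximation $\phi_n$ of $\mathds{1}\{\cdot>x\}$, applies It\^o's formula to the mollified $H_n$, and passes to the limit using continuity of local time, \eqref{mollifier}, and dominated convergence (including a stochastic-integral version), thereby establishing the representation $\int_{X_0}^{X_t}\h^{f_x}(u)\,du=\int_0^t\h^{f_x}(X_s)\,dW_s+\int_0^t f_x(X_s)b(X_s)\,ds-\tfrac t2\rho_b(x)$ directly for the discontinuous $f_x$. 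Only after this step is Proposition~\ref{theo:mart} invoked. Your parenthetical remark that ``the jump \ldots\ is absorbed once one integrates $f_a$ against $\rho_b'$'' addresses only the bias computation, not this issue; you should add the mollification argument (or an equivalent It\^o--Tanaka justification) before applying Proposition~\ref{theo:mart}.
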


One first application of Theorem \ref{theo:cath} concerns the derivation of an upper bound on the $\sup$-norm risk of the diffusion local time estimator.
In fact, it allows to prove the following Corollary which shows that, concerning invariant density estimation, the procedures based on the kernel density and the local time estimator, respectively, are of equal quality in terms of $\sup$-norm rates of convergence.

\medskip

\begin{corollary}\label{cor:centlt}
Let $\X$ be a diffusion as in Definition \ref{def:B} with $\sigma^2\equiv 1$.
Then, there is a positive constant $\zeta$ such that, for any $p,t\geq 1$,
\begin{equation}\label{diff:tail1}
\sup_{b\in\Sigma(\C,A,\gamma,1)}
\left(\E_b\left[\Big\|\frac{\L_t^\bullet(\X)}{t}-\rho_b\Big\|_\infty^p\right]\right)\p
\ \leq\ \zeta\left(\frac{p}{t}+ \frac{1 + \sqrt p+\sqrt{\log t}}{\sqrt t}+t\e^{-\frac{\Lambda t}{2\e\mom}}\right).
\end{equation}
In addition, for any $u\geq 1$,
\begin{equation}\label{diff:tail2}
\P_b\left(\left\|\L_t^\bullet(\X)-t\rho_b\right\|_\infty\geq\e\zeta\left(\sqrt t\left(1+\sqrt{\log(ut)}+\sqrt u\right)+u+t^2 \e^{-\frac{\Lambda t}{2\e\mom}}\right)\right)\ \leq\ \e^{-u}.
\end{equation}
\end{corollary}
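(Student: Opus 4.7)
The natural strategy is to interpose the kernel density estimator $\rho_{t,K}(h)$ and invoke the triangle inequality
\[
\Big\|\frac{\L_t^\bullet(\X)}{t}-\rho_b\Big\|_\infty
\ \leq\ \Big\|\frac{\L_t^\bullet(\X)}{t}-\rho_{t,K}(h)\Big\|_\infty+\big\|\rho_{t,K}(h)-\rho_b\big\|_\infty,
\]
so that the two bounds already at our disposal, Theorem \ref{theo:cath} and Corollary \ref{prop:csi}, can be applied separately to the two summands. The key freedom is the choice of bandwidth $h=h_t$: since the assumption $b\in\Sigma(\C,A,\gamma)$ only grants $\rho_b\in\mathcal H_\R(1,\ML)$ by \eqref{eq:reg_rho_b}, I would use Corollary \ref{prop:csi} in the regularity index $\beta=1$ together with a kernel of order $\lfloor 1\rfloor=0$, and then pick $h=h_t\asymp t^{-1/2}$ so that the bias term $h^\beta$ and the variance term $t^{-1/2}$ in Corollary \ref{prop:csi} balance.

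For the bias+variance term, plugging $h=t^{-1/2}$ into Corollary \ref{prop:csi} immediately yields
\[
\big(\E_b\big[\|\rho_{t,K}(h)-\rho_b\|_\infty^p\big]\big)^{1/p}
\ \lesssim\ \frac{1+\sqrt p+\sqrt{\log t}+\sqrt{\log p}}{\sqrt t}+\frac{p}{t}+\sqrt t\,\e^{-\Lambda t/(2\e\mom)},
\]
which already has the right shape (the extra $\sqrt{\log p}$ can be absorbed into $\sqrt p$ and the $\sqrt t$ in front of the exponential is dominated by the $t$ in the claim). For the first summand, Theorem \ref{theo:cath} with the same bandwidth gives, writing $\sqrt h=t^{-1/4}$,
\[
\P_b\Big(\sqrt t\,\|\rho_{t,K}(h)-\rho_t^\circ\|_\infty>\lambda\Big)\ \leq\ \exp\!\big(-\Lambda_1\lambda/\sqrt h\big)\ =\ \exp(-\Lambda_1\lambda\,t^{1/4}),
\]
valid for $\lambda\geq\lambda_0(t,h)\asymp\sqrt h\,(1+\log t)+\sqrt t\,\e^{-\Lambda t/(2\e\mom)}+\sqrt t\,h^{\beta}$. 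Using the standard layer-cake/tail-to-moment conversion from Lemma \ref{moment_tail_lemma} one obtains
\[
\big(\E_b\big[\|\rho_{t,K}(h)-\rho_t^\circ\|_\infty^p\big]\big)^{1/p}
\ \lesssim\ \frac{\lambda_0(t,h)}{\sqrt t}+\frac{p\sqrt h}{\sqrt t}
\ \lesssim\ \frac{1+\log t}{t^{3/4}}+\frac{1}{\sqrt t}+\e^{-\Lambda t/(2\e\mom)}+\frac{p}{t^{3/4}},
\]
which is again dominated by the right-hand side of \eqref{diff:tail1}. Adding the two contributions and absorbing lower-order terms into the claimed expression with a single constant $\zeta$ then produces \eqref{diff:tail1}.

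The tail bound \eqref{diff:tail2} is then a direct consequence of the moment bound: Lemma \ref{moment_tail_lemma} (applied to $Y:=\|\L_t^\bullet(\X)-t\rho_b\|_\infty$ with the moment majorant $p\mapsto\zeta(p+\sqrt t(1+\sqrt p+\sqrt{\log t})+t^2\e^{-\Lambda t/(2\e\mom)})$) converts the polynomial-in-$p$ moment growth into the announced exponential deviation inequality after multiplying through by $t$.

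The main obstacle I would expect is bookkeeping rather than conceptual: one must verify that the threshold $\lambda_0(t,h)$ in Theorem \ref{theo:cath} is of smaller order than what is claimed, and that the $\sqrt{\log p}$ and $\sqrt{\log(pt)}$ factors appearing in Corollary \ref{prop:csi} can indeed be absorbed into $\sqrt p+\sqrt{\log t}$ uniformly for $p,t\geq 1$. A secondary subtlety is making sure the kernel used in the intermediate estimator genuinely satisfies \eqref{kernel} and the hypotheses of Theorem \ref{theo:cath} for the chosen bandwidth $h=t^{-1/2}\in(0,1)$ with $h\geq t^{-1}$, which holds for all $t\geq 1$.
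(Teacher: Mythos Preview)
Your overall strategy matches the paper's exactly: insert the kernel estimator via the triangle inequality, control $\|\rho_{t,K}(h)-\rho_b\|_\infty$ by Corollary \ref{prop:csi}, and control $\|\rho_t^\circ-\rho_{t,K}(h)\|_\infty$ by the machinery behind Theorem \ref{theo:cath}. The paper even notes, as you do, that $b\in\Sigma(\C,A,\gamma)$ automatically gives $\rho_b\in\mathcal H_\R(1,\ML)$, so $\beta=1$ is the relevant smoothness.

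There is, however, a genuine gap in your execution: the bandwidth $h=t^{-1/2}$ is the wrong choice. From Theorem \ref{theo:cath} the tail of $\sqrt t\,\|\rho_{t,K}(h)-\rho_t^\circ\|_\infty$ decays like $\exp(-\Lambda_1\lambda/\sqrt h)$, so the layer-cake conversion produces a term of order $p\sqrt h/\sqrt t$. With $h=t^{-1/2}$ this is $p/t^{3/4}$, which is \emph{not} dominated by $p/t+\sqrt p/\sqrt t$: take $p=t$, then $p/t^{3/4}=t^{1/4}\to\infty$ while the target stays bounded. (The same obstruction appears if you bypass the tail bound and read off the moment bound $\varphi(p)$ directly from Proposition \ref{theo:mart}: the $p$-linear part there carries a factor $\sqrt{h/t}$.) The paper instead takes $h=h_t\sim t^{-1}$, the smallest bandwidth permitted by Theorem \ref{theo:cath}; then $p\sqrt h/\sqrt t=p/t$, the bias contributions $h^\beta$ and $\sqrt t\,h^\beta$ are $t^{-1}$ and $t^{-1/2}$, and the $1/h$ in front of the exponential becomes the $t$ factor you see in \eqref{diff:tail1}. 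The ``balance bias and variance'' heuristic that led you to $t^{-1/2}$ is not the binding constraint here.

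Two minor points. First, Lemma \ref{moment_tail_lemma} goes from moment bounds to tail bounds, not the other way; what you call ``layer-cake from Lemma \ref{moment_tail_lemma}'' is simply the identity $\E[Y^p]=p\int_0^\infty\lambda^{p-1}\P(Y>\lambda)\,\d\lambda$ combined with the tail estimate. Second, the paper avoids this detour altogether: rather than converting the tail bound of Theorem \ref{theo:cath} back into moments, it revisits the proof of that theorem and extracts the moment bound $\|\sqrt t(\rho_{t,K}(h)-\rho_t^\circ)\|_p\le \e^{-1}\varphi(p)+O(h)$ directly from Proposition \ref{theo:mart} applied to the class $\FF_{K,h}$. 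Your route would work once you fix the bandwidth, but it is slightly circular since Theorem \ref{theo:cath} was itself derived from that very moment bound.
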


\medskip

\begin{remark}
\begin{itemize}
\item[(a)]
As already indicated, the results yield the same $\sup$-norm convergence rate for the local time and the kernel density estimator with bandwidth $t^{-1/2}$, i.e.,
\[\sup_{b\in\Sigma(\C,A,\gamma,1)}
\left(\E_b\left[\left\|\tilde \rho_t-\rho_b\right\|_\infty^p\right]\right)\p =O\left(\left(\frac{\log t}{t}\right)^{1/2}\right),\text{ for }\tilde \rho_t\in\left\{\rho_{t,K}(t^{-1/2}), t^{-1}L^\bullet_t(X)\right\}.
\]
\item[(b)]
The explicit dependence of the minimax upper bounds in Corollary \ref{prop:csi} and Corollary \ref{cor:centlt} on $p$ is crucial for further statistical applications such as adaptive drift estimation. 
As compared to Corollary \ref{cor:centlt}, we do not have to impose additional smoothness assumptions on the drift coefficient for applying Corollary \ref{prop:csi} since $b\in\Sigma(\C,A,\gamma,1)$ implies that $b\in\Sigma(1,\mathcal L)$.
\item[(c)] Since the local time estimator is unbiased, Corollary \ref{cor:centlt} can also be interpreted as a result on the centred local time, providing a concentration inequality of the form \eqref{diff:tail2} which is of its own probabilistic interest.
\end{itemize}
\end{remark}

\smallskip

Once the result for the centred local time stated in \eqref{diff:tail1} is available, one can derive the following modified version of Proposition \ref{theo:mart}. 
In a number of concrete applications, this version can be considered as an improvement, even though we lose the subexponential behaviour. 
This is our price for obtaining a better upper bound in terms of the size $\mathcal S$ of the support of the functions from the function class $\mathcal F$. 
In our statistical application, the support is of size $h_t$ with $h_t\downarrow 0$ as $t\to\infty$. 
Therefore, gaining another $\sqrt{\mathcal S}$ is more beneficial than the subexponential behaviour.
Recall the definition of $\H_t$ in \eqref{def:H}.

\medskip

\begin{theorem}\label{improved_version}
Let $\X$ be a diffusion as in Definition \ref{def:B} with $\sigma^2\equiv 1$, and grant the assumptions of Theorem \ref{theo:bernsup}. 
Then, for any $p,t\geq1$, there exist constants $\tilde\co$ and $\tilde\co_0$ such that
\begin{equation}\label{eq:mo18}
\sup_{b\in \Sigma(\C,A,\gamma,1)}\left(\E_b\left[\|\H_t\|_\FF^p\right]\right)\p \ \leq\  \tilde\Psi_t(p),
\end{equation}
where
\begin{align*}
\tilde\Psi_t(p)&:=\ \tilde\co\Bigg\{ 
\V\sqrt{\mathcal S}\Big\{\left(\log \left(\frac{\AA}{\V}\sqrt{\mathcal S + p\Lambda t}\right)\right)^{3/2} + \left(\log\left(\frac{\AA}{\V}\sqrt{\mathcal S + p\Lambda t}\right)\right)^{1/2} + p^{3/2}\Big\}+\frac{p}{\sqrt t}\\
&\hspace*{3em}+ \sqrt t \exp\left(-\tilde\co_0 t \right)
+ \V\left(\log\left(\frac{\AA}{\V}\sqrt{\mathcal S + p\Lambda t}\right)\right)^{1/2}\\
&\hspace{3em}+\frac{\V}{t^{1/4}}\left(1+ \log\left(\frac{\AA}{\V}\sqrt{\mathcal S + p\Lambda t}\right)\right) + \V \left\{\sqrt p+ \frac{p}{t^{1/4}}\right\}\Bigg\}.
\end{align*}
\end{theorem}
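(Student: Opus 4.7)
The plan is to refine the sketch given immediately below \eqref{def:A}: decompose $\H_t(f)=(\mathbf{I})+(\mathbf{II})$, where $(\mathbf{II})_f:=t^{-1/2}\!\int_0^t f(X_s)\,\d W_s$, and treat each term separately. For the empirical-process piece $(\mathbf{I})$, Theorem \ref{theo:bernsup}$(\mathbf{II})$ delivers the bound $\Phi^{b}_t(p)$ from \eqref{def:Ph} directly; this accounts for the first two lines of $\tilde\Psi_t(p)$ together with the contribution $\sqrt t \exp(-\tilde\co_0 t)$ (with $\tilde\co_0:=\Lambda/(2\e\mom)$).

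The new ingredient is a sharper control of $\sup_{f\in\FF}|(\mathbf{II})_f|$. I would apply the sharp BDG inequality \eqref{4.2} to the increments $(\mathbf{II})_f-(\mathbf{II})_g$, rewrite the inner quadratic variation via the occupation times formula, and split $L_t^y(X)=t\rho_b(y)+(L_t^y(X)-t\rho_b(y))$. This yields the pointwise estimate
\[
\int_0^t (f-g)^2(X_s)\,\d s \ \le \ \|f-g\|_{L^2(\lebesgue)}^2\bigl(t\|\rho_b\|_\infty+\|L_t^\bullet(X)-t\rho_b\|_\infty\bigr),
\]
which is the crucial improvement over the crude bound used in the proof of Proposition \ref{theo:mart}. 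Combined with \eqref{eq:reg_rho_b} and Corollary \ref{cor:centlt} applied at level $2p$ to control $\|L_t^\bullet(X)-t\rho_b\|_\infty$, a H\"older split factors off the scalar local-time contribution and produces a subexponential increment bound in the \emph{deterministic} metric $\|\cdot\|_{L^2(\lebesgue)}$ of the form
\[
\bigl\|(\mathbf{II})_f-(\mathbf{II})_g\bigr\|_p \ \le \ \sqrt p\,\|f-g\|_{L^2(\lebesgue)}\bigl(\sqrt{\ML}+t^{-1/4}\Theta_t(p)\bigr),
\]
with a scalar factor $\Theta_t(p)$ carrying the $\sqrt p$ and $\sqrt{\log t}$ dependences coming from \eqref{diff:tail1}.

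Equipped with this sub-Gaussian-type increment bound (so $\alpha=2$ in \eqref{martrep2}), the remainder of the argument mirrors the proof of Theorem \ref{theo:1}: apply Proposition \ref{thm:dirk} on each of the shells $\FF_k$ from \eqref{ikfk}, use Lemma \ref{maximal_inequality_X} to dominate the probability that $X$ visits $\supp f$ for $f\in\FF_k$ by $\exp(-kp)$, and exploit the resulting geometric factor $\exp(-k/2)$ to sum over $k$. The residual contribution from the event that $X$ escapes the effective window is absorbed into $\sqrt t\exp(-\tilde\co_0 t)$ via Cauchy--Schwarz with the single-$f$ BDG bound, as in the treatment of $\mathcal F_0\c$ in Theorem \ref{theo:1}. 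Under Assumption \ref{ann:3}, Lemma \ref{EntropyIntegralLebesgueMetric} evaluates the entropy integral $E(\FF,\|\cdot\|_{L^2(\lebesgue)},2)$ as a constant multiple of $\V(\log(\tfrac{\AA}{\V}\sqrt{\mathcal S+p\Lambda t}))^{1/2}$, which, after multiplication by the deterministic part $\sqrt{\ML}$ and by the random part $t^{-1/4}\Theta_t(p)$ of the increment bound, produces precisely the remaining summands $\V(\log(\ldots))^{1/2}$, $\V t^{-1/4}(1+\log(\ldots))$ and $\V\{\sqrt p + p\,t^{-1/4}\}$ in $\tilde\Psi_t(p)$.

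The main obstacle is the coexistence of the generic chaining machinery, which requires a deterministic pseudo-metric on $\FF$, with the path-dependent factor $\|L_t^\bullet(X)-t\rho_b\|_\infty$ naturally produced by the occupation times formula. I would handle this by carrying out the chaining at the enlarged moment order $2p$ so that H\"older's inequality cleanly separates the deterministic $L^2(\lebesgue)$-geometry of $\FF$ from the scalar $L^{2p}$-norm of the centred-local-time deviation; the latter is then uniformly controlled by Corollary \ref{cor:centlt}. The unavoidable price is a $p^{3/2}$ tail in place of the $p$-subexponential tail of Proposition \ref{theo:mart}, which matches the trade-off anticipated in the remark preceding the theorem.
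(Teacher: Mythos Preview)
Your overall strategy matches the paper's proof: decompose $\H_t=\G_t^b+(\mathbf{II})$, control $\G_t^b$ via Theorem~\ref{theo:bernsup}$(\mathbf{II})$, and for $(\mathbf{II})$ combine the sharp BDG inequality with the occupation times formula and Corollary~\ref{cor:centlt} to obtain an increment bound in the $L^2(\lebesgue)$-metric, then localise over the shells $\FF_k$ and close with Lemma~\ref{EntropyIntegralLebesgueMetric}. The one imprecision is your labeling of the increment bound as ``sub-Gaussian-type ($\alpha=2$)'': since $\Theta_t(p)$ itself carries a $\sqrt p$, the bound you display is really $\|(\mathbf{II})_f-(\mathbf{II})_g\|_p\le \Lambda_3\|f-g\|_{L^2(\lebesgue)}(\sqrt p+p\,t^{-1/4})$, i.e.\ a \emph{mixed} tail, and the paper accordingly invokes part~(b) of Proposition~\ref{thm:dirk} with the pair $(d_1,d_2)=(t^{-1/4}d,d)$ rather than part~(a) with a single $\alpha$; this is precisely what produces the two separate entropy contributions $\V(\log(\ldots))^{1/2}$ and $\V t^{-1/4}(1+\log(\ldots))$ that you correctly list. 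Your ``chain at order $2p$ and H\"older out the local-time factor'' formulation is not needed and would not by itself yield a $p$-independent pseudo-metric; the clean route, which the paper takes, is to absorb the $p$-dependence of $\bigl(\E_b[(t^{-1}\|L_t^\bullet\|_\infty)^{p/2}]\bigr)^{1/p}$ into the constant $\Lambda_3$ via Corollary~\ref{cor:centlt}, obtaining $\tilde{\co}_1(1+(p/t)^{1/4}+\sqrt{p/t})$ and hence the mixed-tail form directly.
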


\medskip

\begin{remark}
As before, it is straightforward to translate the moment bound \eqref{eq:mo18} into a corresponding upper tail bound by means of Lemma \ref{moment_tail_lemma}.
The effectiveness of the obtained exponential inequalities is reinforced in \cite{cacs18} where we investigate the question of adaptive drift estimation. 
In this respect, Theorem \ref{improved_version} on stochastic integrals will be a crucial device.
\end{remark}

\bigskip
\noindent
\textbf{Acknowledgment} 
The authors thank Richard Nickl for his interest in this work and very helpful discussions.
Financial support from the Deutsche Forschungsgemeinschaft via RTG 1953 is gratefully acknowledged.

\bigskip

\begin{appendix}

\section{Basic auxiliary results}\label{app:A}
We start with proving two auxiliary results which are frequently used in our analysis.

\medskip

\begin{lemma} \label{X_0_Mom} 
Let $X$ be as in Definition \ref{def:B}. 
Then, there is a positive constant $\mom$, depending only on $\C, A,\gamma,\underline\nu,\overline\nu$, such that 
\[\sup_{b\in\Sigma(\C,A,\gamma,\sigma)}\|X_0\|_p\ = \ \sup_{b\in\Sigma(\C,A,\gamma,\sigma)} \left(\E_b\left[|X_0|^p\right]\right)\p\ \leq\ \mom p,\quad p\geq 1.\]
\end{lemma}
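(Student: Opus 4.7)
The plan is to exploit the explicit formula $\rho_b(x) = C_b^{-1}\exp\bigl(2\int_0^x b(y)\,\d y\bigr)$ for the invariant density, derive a uniform exponential tail bound on $\rho_b$ over the class $\Sigma(\C,A,\gamma)$, and then estimate $\|X_0\|_p$ via a standard gamma integral together with Stirling's approximation.

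Fix $b \in \Sigma(\C,A,\gamma)$. I would first analyse $\int_0^x b(y)\,\d y$ by splitting the integration region into $\{|y|\leq A\}$ and $\{|y|>A\}$. On the former, the at-most-linear growth $|b(y)|\leq\C(1+|y|)$ gives a uniform bound $\C(A+A^2/2)$ in absolute value. On the latter, the sign condition $b(y)\operatorname{sgn}(y)\leq -\gamma$ forces $b(y)\leq-\gamma$ on $(A,\infty)$ and $b(y)\geq\gamma$ on $(-\infty,-A)$, contributing at most $-\gamma(|x|-A)$ to the integral whenever $|x|>A$. Combining these pieces, one obtains
\[
2\int_0^x b(y)\,\d y \;\leq\; 2\C(A+A^2/2) - 2\gamma(|x|-A)_+,\qquad x\in\R.
\]

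Second, I would bound $C_b$ from below in a uniform way. Since the integrand $\exp\bigl(2\int_0^u b(y)\,\d y\bigr)$ is bounded below on $[-A,A]$ by $\exp(-2\C(A+A^2/2))$, one gets $C_b \geq 2A\exp(-2\C(A+A^2/2))$. Combining this lower bound on $C_b$ with the upper bound on the numerator produces a universal exponential tail bound
\[
\rho_b(x) \;\leq\; K\exp\bigl(-2\gamma(|x|-A)_+\bigr),
\]
for some constant $K = K(\C,A,\gamma)$, uniform over $b\in\Sigma(\C,A,\gamma)$.

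Finally, I would compute $\E_b[|X_0|^p] = \int_\R|x|^p\rho_b(x)\,\d x$ by splitting the integral into $\{|x|\leq A\}$ and $\{|x|>A\}$, substituting $u = |x|-A$ on the tail, using $(u+A)^p \leq 2^{p-1}(u^p+A^p)$, and evaluating $\int_0^\infty u^p \e^{-2\gamma u}\,\d u = \Gamma(p+1)/(2\gamma)^{p+1}$. The only genuinely delicate point is extracting the stated dependence $\|X_0\|_p \leq \mom p$ from this: this relies on Stirling's inequality $\Gamma(p+1)^{1/p} = O(p)$ as $p\to\infty$, together with the observation that all remaining prefactors contribute at most a constant when raised to the power $1/p$. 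A suitable choice of $\mom$ depending only on $\C,A,\gamma$ then yields the claim.
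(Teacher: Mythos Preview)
Your proposal is correct and follows essentially the same route as the paper: both split $\int|x|^p\rho_b(x)\,\d x$ at $|x|=A$, use the drift sign condition to obtain the exponential tail $\rho_b(x)\lesssim\e^{-2\gamma(|x|-A)}$ for $|x|>A$, expand $(u+A)^p\leq 2^{p-1}(u^p+A^p)$, evaluate the resulting gamma integral, and finish with Stirling. The only cosmetic difference is that the paper cites the uniform bound $\|\rho_b\|_\infty\leq\ML$ from \eqref{eq:reg_rho_b} (writing the tail factor as $\rho_b(A)+\rho_b(-A)\leq 2\ML$) rather than bounding $C_b$ from below explicitly as you do.
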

\begin{proof}
Note that
\begin{eqnarray*}
&&\sup_{b\in \Sigma(\C,A,\gamma,\sigma)}\E_b\left[|X_0|^p\right]=\sup_{b\in\Sigma(\C,A,\gamma,\sigma)}\int |x|^p\rho_b(x)\d x\\
&&\hspace*{3em}\leq 2A^{p+1}\ML  + \int_A^\infty x^p\exp(-2\gamma(x-A))\d x\ \left(\rho_b(A)+\rho_b(-A)\right)\underline\nu^{-2}\overline\nu^2\\
&&\hspace*{3em}\leq 2A^{p+1}\ML  + \underline\nu^{-2}\overline\nu^2(\rho_b(A)+\rho_b(-A))\left(2^{p-1}\frac{A^p}{2\gamma} +\frac{2^{p-1}}{(2\gamma)^{p+1}}\int_0^\infty x^p\e^{-x}\d x\right)\\
&&\hspace*{3em}= 2A^{p+1}\ML  + 2\ML\underline\nu^{-2}\overline\nu^2\left(2^{p-1}\frac{A^p}{2\gamma} +\frac{2^{p-1}}{(2\gamma)^{p+1}}\Gamma(p+1)\right).
\end{eqnarray*}
Due to the formula of Stirling, we have
\[\left(\Gamma(p+1)\right)^{\frac{1}{p}}\leq \sqrt{2\pi}\e (p+1)^{1+1/p}\leq  \sqrt{2\pi}\e (p+1)\tilde c\leq \sqrt{2\pi} 2\e\tilde c p  \]
for a constant $\tilde c$ such that $\sup_{p\geq 1}(p+1)^{1/p}\leq \tilde c$. This gives the assertion.
\end{proof}

\medskip

\begin{lemma}\label{moment_tail_lemma}
Let $X$ be a real-valued random variable satisfying, $\forall p\geq 1$, $(\E\left[|X|^p\right])^{\frac{1}{p}}\leq f(p)$, for some function $f\colon(0,\infty)\rightarrow(0,\infty)$. 
Then,
\begin{equation}\label{eq_tail_lemma}
\P\left(|X|\geq \e f(u)\right)\ \leq\ \exp\left(-u\right),\quad u\geq 1.
\end{equation}
\end{lemma}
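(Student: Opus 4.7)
The plan is to apply Markov's inequality in its $p$-th moment form and then optimize by choosing $p$ to be the parameter $u$ appearing in the target tail bound. Specifically, for any $p \geq 1$ and any $t > 0$, Markov's inequality gives
\[
\P(|X| \geq t) \ =\ \P(|X|^p \geq t^p) \ \leq\ \frac{\E[|X|^p]}{t^p} \ \leq\ \left(\frac{f(p)}{t}\right)^p.
\]

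Next, I would set $t := \e f(u)$ with $u \geq 1$ and choose $p = u$ in the bound above (which is legitimate since $u \geq 1$ ensures the hypothesis $p \geq 1$). This yields
\[
\P(|X| \geq \e f(u)) \ \leq\ \left(\frac{f(u)}{\e f(u)}\right)^u \ =\ \e^{-u},
\]
which is precisely \eqref{eq_tail_lemma}.

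There is no real obstacle here: the argument is the standard Markov-Chebyshev optimization, and the factor $\e$ appearing in the statement is exactly what emerges from the canonical choice $p = u$. The only minor point to note is that $f(u)$ need not be defined for non-integer $u$ in principle, but the hypothesis is stated for all $p \geq 1$, so $f$ is a function on $[1,\infty)$ and the substitution $p = u$ is unambiguous.
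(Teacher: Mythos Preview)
Your proof is correct and essentially identical to the paper's own argument: apply Markov's inequality with exponent $p$, use the moment bound $\|X\|_p\le f(p)$, and then set $p=u$. The paper's proof is just these two lines, so there is nothing to add.
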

\begin{proof}
Fix $u\geq 1$. Then, for any $p \geq 1$,
\begin{eqnarray*}
\P\left(|X|\geq \e f(u)\right)\leq \frac{\E\left[|X|^p\right]}{\e^pf^p(u)}\ \leq\ \frac{f^p(p)}{\e^pf^p(u)}.
\end{eqnarray*}
Setting $p:=u$, we obtain \eqref{eq_tail_lemma}.
\end{proof}

\medskip

One central ingredient for the proof of our concentration inequalities are generic chaining results which go back to Talagrand (cf.~\cite{tala96} and \cite{tala14}). 
We state a version of the results in \cite{dirk15} here which is adjusted to our needs. 
In particular, we bound the abstract truncated $\gamma$-functionals appearing in \cite{dirk15} by entropy integrals.

\medskip

\begin{proposition}[cf.~Theorem 3.2 \& 3.5 in \cite{dirk15}]\label{thm:dirk}
Consider a real-valued process $(X_f)_{f\in\mathcal F}$, defined on a semi-metric space $(\FF,d)$.
\begin{itemize}
\item[$\operatorname{(a)}$] If there exists some $\alpha\in(0,\infty)$ such that
\begin{equation}\label{ineq:a}
\mathbb P\left(|X_f-X_g|\geq u d(f,g)\right)\ \leq\ 2\exp\left(-u^{\alpha}\right)\quad\forall  f,g\in\mathcal F,\ u\geq 1,
\end{equation}
then there exists some constant $C_\alpha>0$ (depending only on $\alpha$) such that, for any $1\leq p<\infty$,
\begin{equation}\label{eq:101}
\left(\mathbb E\left[\sup_{f\in\mathcal F}|X_f|^p\right]\right)^{\frac{1}{p}}\ \leq\ C_\alpha \int_0^\infty \left(\log N(u,\mathcal F,d)\right)^{\frac{1}{\alpha}}\d u + 2\sup_{f\in\mathcal F}\left(\mathbb E\left[|X_f|^p\right]\right)^{\frac{1}{p}}.
\end{equation}
\item[$\operatorname{(b)}$] If there exist semi-metrics $d_1,d_2$ on $\FF$ such that
\[\P\left(|X_f-X_g|\geq ud_1(f,g)+\sqrt u d_2(f,g)\right)\ \leq\ 2\e^{-u}\quad \forall f,g\in\FF,\ u\geq 1,
\]
then there exist positive constants $\tilde C_1,\tilde C_2$ such that, for any $1\leq p<\infty$,
\begin{eqnarray}\label{eq:102}
\left(\mathbb E\left[\sup_{f\in\mathcal F}|X_f|^p\right]\right)^{\frac{1}{p}}
&\leq& \tilde C_1 \int_0^\infty \log N(u,\FF,d_1)\d u\\\nonumber
&&\hspace*{3em} + \tilde C_2 \int_0^\infty \sqrt{\log N(u,\FF,d_2)}\d u + 2\sup_{f\in\FF}\left(\mathbb E\left[|X_f|^p\right]\right)^{\frac{1}{p}}.
\end{eqnarray}
\end{itemize}
\end{proposition}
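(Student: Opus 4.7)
The plan is to upgrade the proof of Proposition~\ref{theo:mart} by replacing the crude bound $\|\L_t^\bullet(X)\|_\infty$ used there with its centred version $\|\L_t^\bullet(X)/t - \rho_b\|_\infty$, for which Corollary~\ref{cor:centlt} delivers a much sharper moment inequality. Starting from the decomposition
\[
\H_t(f)\ =\ \underbrace{\G^{b}_t(f)}_{=(\mathbf{I})(f)} \ +\ \underbrace{t^{-1/2}\int_0^t f(X_s)\,\d W_s}_{=(\mathbf{II})(f)},
\]
I would bound the two suprema separately. For $(\mathbf{I})$, Theorem~\ref{theo:bernsup}$(\mathbf{II})$ applies verbatim and contributes $\Ph_t(p)$; this accounts for the summands $\V\sqrt{\mathcal S}\{(\log\cdots)^{3/2}+p^{3/2}\}$, $p/\sqrt t$ and $\sqrt t\,\e^{-\tilde\co_0 t}$ in $\tilde\Psi_t(p)$.

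The substance of the proof lies in $(\mathbf{II})$. By the BDG inequality \eqref{4.2}, the occupation times formula, and the split $\L_t^y(X)/t = \rho_b(y) + (\L_t^y(X)/t-\rho_b(y))$,
\begin{align*}
\|(\mathbf{II})(f)-(\mathbf{II})(g)\|_p^2
&\leq\ \bdg^2 p\,\Big\|\int_\R (f-g)^2(y)\tfrac{\L_t^y(X)}{t}\,\d y\Big\|_{p/2}\\
&\leq\ \bdg^2 p\,\|f-g\|_{L^2(\lebesgue)}^2\bigl(\|\rho_b\|_\infty+\bigl\|\|\L_t^\bullet(X)/t-\rho_b\|_\infty\bigr\|_{p/2}\bigr).
\end{align*}
Corollary~\ref{cor:centlt} bounds the centred-local-time factor by $\zeta(p/t+(1+\sqrt p+\sqrt{\log t})/\sqrt t+t\e^{-\Lambda t/(2\e\mom)})$; call the full bracket $B(p,t)^2$. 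Lemma~\ref{moment_tail_lemma} then converts this into a sub-Gaussian increment condition \eqref{ineq:a} for $(\mathbf{II})$ with $\alpha=2$ and metric $d_2(f,g):=\e\bdg\|f-g\|_{L^2(\lebesgue)}B(p,t)$. Applying Proposition~\ref{thm:dirk}(a) gives
\[
\|\sup_{f\in\FF}|(\mathbf{II})(f)|\|_p\ \leq\ C_2\int_0^\infty (\log N(u,\FF,d_2))^{1/2}\,\d u + 2\sup_{f\in\FF}\|(\mathbf{II})(f)\|_p.
\]
Using Assumption~\ref{ann:3} via the support-size comparison $\|\cdot\|_{L^2(\lebesgue)}\leq\sqrt{\mathcal S}\|\cdot\|_{L^2(\mathbb Q)}$ (as in Lemma~\ref{EntropyIntegralLebesgueMetric}), the entropy integral is dominated by $C\V B(p,t)\sqrt{v\log(\AA\sqrt{\mathcal S}/\V)}$, and the diagonal by $C\V\sqrt p\,B(p,t)$. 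Expanding subadditively, $B(p,t)\lesssim 1 + \sqrt{p/t} + (1+p^{1/4}+(\log t)^{1/4})/t^{1/4}$ plus an exponentially small remainder, and distributing the factors $\sqrt p$ and $\sqrt{\log(\cdot)}$ yields exactly the remaining summands $\V\sqrt{\log}$, $\V(1+\log)/t^{1/4}$, $\V\sqrt p$, $\V p/t^{1/4}$ (after absorbing the subdominant cross-terms $p^{3/4}/t^{1/4}$, $\sqrt p(\log t)^{1/4}/t^{1/4}$ etc.\ into $p/t^{1/4}$).

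To pass from compact intervals to the whole real line I would import the localisation machinery of Theorem~\ref{theo:lt} and Theorem~\ref{theo:1}: partition $\R$ into the sets $I_k$ of scale $p\Lambda t$, write $\FF=\bigcup_k \FF_k$, apply the chaining estimate on each $\FF_k$, and on $\FF_k$ for $k\geq 1$ exploit the inclusion of the relevant event in $A_k=\{\max_{s\leq t}|X_s|\geq kp\Lambda t\}$ together with Lemma~\ref{maximal_inequality_X}; the geometric factor $\e^{-k/2}$ then makes $\sum_k$ converge and promotes the entropy argument to $\AA\sqrt{\mathcal S+p\Lambda t}/\V$. Crucially, since $(\mathbf{II})(f)$ is centred pathwise and vanishes on $A_f^c$, no analogue of the $\sqrt t\,C\U(1+2\eta\mom)^{\eta}$ boundary term from the proof of Theorem~\ref{theo:1} is needed for $(\mathbf{II})$; the corresponding contribution only comes from $(\mathbf{I})$ through $\Ph_t(p)$.

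The main obstacle I anticipate is the arithmetic of absorbing the many cross-terms in the product $\sqrt{p}\cdot\sqrt{\log(\cdot)}\cdot B(p,t)$ into the specific list written in $\tilde\Psi_t(p)$ while tracking how $B(p,t)$ interacts with the localisation (the scale $p\Lambda t$ enters $B(p,t)$ only through the Corollary~\ref{cor:centlt} estimate, which must be applied at the moment order $qp/2$ for $q\in\{1,2\}$ coming from Hölder when splitting $\E[\|\Ma^f\|^{2p}\mathds 1(A_k)]$). Managing this carefully so that the $\sqrt{\mathcal S}$ improvement is preserved---rather than being degraded to $\mathcal S$ as in Proposition~\ref{theo:mart}---is the whole point of the statement and the delicate part of the argument.
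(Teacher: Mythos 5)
Your proposal does not address the stated result at all; it is a sketch of the proof of Theorem~\ref{improved_version}. The statement you were asked to prove is Proposition~\ref{thm:dirk}, an abstract generic-chaining moment bound for an arbitrary real-valued process $(X_f)_{f\in\FF}$ on a semi-metric space. It makes no reference to diffusions, local time, Tanaka's formula, martingale approximation, or the classes $\Sigma(\C,A,\gamma)$; none of the ingredients you invoke (Corollary~\ref{cor:centlt}, Theorem~\ref{theo:bernsup}, the decomposition of $\H_t$, Lemma~\ref{maximal_inequality_X}, the localisation by $I_k$) are even in scope. A proof of Proposition~\ref{thm:dirk} would be a Talagrand-type chaining argument: fix an admissible sequence of partitions of $(\FF,d)$, write the telescoping chain $X_f - X_{\pi_0(f)}=\sum_{n\geq 1}(X_{\pi_n(f)}-X_{\pi_{n-1}(f)})$, apply the $\alpha$-exponential increment inequality \eqref{ineq:a} (resp.\ the mixed tail in part (b)) link by link, take a union bound over the cardinality of each partition level, integrate the resulting tail to obtain an $L^p$ bound controlled by the truncated $\gamma_\alpha$-functional (resp.\ $\gamma_1,\gamma_2$-functionals), and finally dominate those functionals by Dudley-type entropy integrals. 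In fact the paper supplies no proof at all: Proposition~\ref{thm:dirk} is stated as a citation of Theorems~3.2 and~3.5 in \cite{dirk15}, explicitly described in the text as ``a version of the results in \cite{dirk15} \ldots\ adjusted to our needs'', the only local adaptation being the replacement of the abstract $\gamma$-functionals by entropy integrals.

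As an aside, even read as an attempt at Theorem~\ref{improved_version}, your sketch has a gap: you propose to fold the whole estimate from Corollary~\ref{cor:centlt} into a single $p$-dependent metric $d_2(f,g)=\e\bdg\|f-g\|_{L^2(\lebesgue)}B(p,t)$ and then apply Proposition~\ref{thm:dirk}(a) with $\alpha=2$. But the increment bound one actually obtains is of the genuinely mixed form $\|f-g\|_{L^2(\lebesgue)}\bigl(\sqrt p + p\,t^{-1/4}\bigr)$; a chaining argument requires semi-metrics that do not depend on the moment order, and a sub-Gaussian ansatz cannot absorb the linear-in-$p$ piece. The paper's proof therefore invokes part (b) of Proposition~\ref{thm:dirk} with the two $p$-free metrics $d_1=t^{-1/4}\e\Lambda_3\|\cdot\|_{L^2(\lebesgue)}$ and $d_2=\e\Lambda_3\|\cdot\|_{L^2(\lebesgue)}$, which is exactly why $\tilde\Psi_t(p)$ contains both a $t^{-1/4}$-weighted logarithmic term and a Dudley-type $\sqrt{\log}$ term.
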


The entropy integrals appearing on the rhs of \eqref{eq:101} and \eqref{eq:102} will be controlled by means of the following lemmata.

\medskip

\begin{lemma}\label{lem:entgi}
Given some function of bounded variation $H\colon\R\to\R$ and $h>0$, let 
\[\FF\ :=\ \FF_h\ =\ \left\{H\left(\frac{x-\cdot}{h}\right)\colon x\in\R\right\}.\]
Then there exist some constants $A=A(\|H\|_{\operatorname{TV}})<\infty$ and $v\geq 2$, not depending on $h$, such that, for any probability measure $\mathbb{Q}$ on $\R$ and any $0<\ep<1$, $N(\ep,\FF_h,\|\cdot\|_{L^2(\mathbb{Q})})\ \leq\ (A/\ep)^v$.
\end{lemma}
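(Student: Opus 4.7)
The plan is to reduce to the case of a monotone $H$ via Jordan decomposition and then exhibit the resulting translate classes as VC-subgraph classes whose parameters are independent of $h$. Since $H$ has bounded variation, write $H = H_+ - H_-$ where $H_+, H_-\colon \R \to \R$ are both non-decreasing, bounded, and satisfy $\|H_\pm\|_\infty \leq \|H\|_{\operatorname{TV}}$. For each sign $\pm$, set
\[
\FF_h^{\pm}\ :=\ \left\{H_{\pm}\!\left(\tfrac{x-\cdot}{h}\right)\colon x\in\R\right\},
\]
so that every element of $\FF_h$ is the difference of one element of $\FF_h^{+}$ and one element of $\FF_h^{-}$. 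Consequently, any $\ep/2$-nets of the two subclasses (in $\|\cdot\|_{L^2(\mathbb{Q})}$) yield an $\ep$-net of $\FF_h$, and
\[
N(\ep,\FF_h,\|\cdot\|_{L^2(\mathbb{Q})})\ \leq\ N(\ep/2,\FF_h^{+},\|\cdot\|_{L^2(\mathbb{Q})})\cdot N(\ep/2,\FF_h^{-},\|\cdot\|_{L^2(\mathbb{Q})}).
\]
Thus it suffices to bound each factor uniformly in $h$ by $(A/\ep)^{v}$ for constants depending only on $\|H\|_{\operatorname{TV}}$.

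Next I would argue that each $\FF_h^{\pm}$ is a VC-subgraph class of bounded VC index $\leq 2$, independently of $h$. For $G:= H_{\pm}$ non-decreasing and any $h>0$, the map $x\mapsto G((x-y)/h)$ is itself non-decreasing for each fixed $y$, so the subgraphs
\[
C_x\ :=\ \left\{(y,t)\in\R\times\R\colon t < G\!\left(\tfrac{x-y}{h}\right)\right\},\quad x\in\R,
\]
form a nested family: $x_1\leq x_2$ implies $C_{x_1}\subset C_{x_2}$. A linearly ordered family of sets cannot shatter two points, so the VC index of $\{C_x\}_{x\in\R}$ is at most $2$, uniformly in $h$.

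Having established the VC-subgraph property with a constant envelope $F \equiv \|H\|_{\operatorname{TV}}$, Theorem 2.6.7 in van der Vaart and Wellner yields universal constants $A_0<\infty$ and $v_0\geq 2$, depending only on $\|H\|_{\operatorname{TV}}$ (and not on $h$ or $\mathbb{Q}$), such that
\[
N\!\left(\ep\|F\|_{L^2(\mathbb Q)},\FF_h^{\pm},\|\cdot\|_{L^2(\mathbb Q)}\right)\ \leq\ (A_0/\ep)^{v_0},\qquad 0<\ep<1.
\]
Absorbing the constant $\|F\|_{L^2(\mathbb Q)}\leq \|H\|_{\operatorname{TV}}$ into the leading constant and plugging into the product bound above gives $N(\ep,\FF_h,\|\cdot\|_{L^2(\mathbb Q)})\leq (A/\ep)^{v}$ for appropriate $A=A(\|H\|_{\operatorname{TV}})$ and $v\geq 2$, as claimed.

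The only point requiring real care is the uniformity of the VC bound in the bandwidth $h$, which is the main (and only) nontrivial ingredient; everything else is routine application of standard permanence properties of VC-subgraph covering numbers. The rescaling $(x-\cdot)/h$ is a strictly increasing diffeomorphism of $\R$ for each fixed $x$, so it does not affect monotonicity of $G$, and the nesting argument produces the same VC index for every $h>0$, which is what ensures that the constants $A,v$ do not depend on the bandwidth.
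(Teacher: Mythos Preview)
Your argument is correct. The paper does not give a proof at all: it simply records that the lemma is a special case of Proposition~3.6.12 in Gin\'e--Nickl (2016). What you have written is essentially the proof of that proposition specialised to the one-parameter translation class, so you have supplied the content the paper outsourced. The key observation---that for monotone $G$ the family $x\mapsto G((x-\cdot)/h)$ is pointwise ordered in $x$, hence the subgraphs are linearly ordered and the VC index is at most $2$ regardless of $h$---is exactly the mechanism behind the cited result, and your product bound for the Jordan pieces is the standard permanence argument.

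One small imprecision: the claim $\|H_\pm\|_\infty \le \|H\|_{\operatorname{TV}}$ depends on how you normalise the Jordan decomposition and on whether $\|H\|_{\operatorname{TV}}$ denotes the pure variation or the BV norm. If $H$ has a nonzero limit at $-\infty$ (as does $H(u)=\int_{-\infty}^u K - \mathds{1}_{[0,\infty)}(u)$ in the paper's application), the monotone pieces may carry an additive constant not controlled by the variation alone. This does not affect the argument---the envelope still depends only on $H$ and not on $h$---but the constant $A$ then depends on $\|H\|_\infty$ as well as on the variation. You may want to state the envelope as $\|H_+\|_\infty\vee\|H_-\|_\infty$ rather than $\|H\|_{\operatorname{TV}}$, or absorb both into a single $H$-dependent constant.
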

The preceding lemma is a consequence of the more general result of Proposition 3.6.12 in \cite{gini16}.

\medskip

\begin{lemma}\label{EntropyIntegralLebesgueMetric}
Grant the conditions of Theorem \ref{theo:1} and Assumption \ref{ann:3}, and define the function classes $\FF_k$ according to \eqref{ikfk}. 
Then, for all $k\in\N_0$ and any constant $\Gamma\geq 1$,
\begin{eqnarray*}
\int_0^\infty \log N(u,\mathcal F_k, \Gamma \|\cdot\|_{L^2(\lebesgue)})\d u&\leq& 2v\V\Gamma\left(1+ \log\left(\frac{\AA}{\V}\sqrt{\mathcal S + p\Lambda(t)}\right)\right),\\
\int_0^\infty \sqrt{\log N(u,\mathcal F_k, \Gamma \|\cdot\|_{L^2(\lebesgue)})}\d u
&\leq& 4\V\Gamma\sqrt{v\log\left(\frac{\AA}{\V}\sqrt{\mathcal S + p\Lambda(t)}\right)}\\
\int_0^\infty \left(\log N(u,\mathcal F_k, \Gamma \|\cdot\|_{L^2(\lebesgue)})\right)^{3/2}\d u
&\leq&2\V\Gamma\left(v\log \left(\frac{\AA}{\V}\sqrt{\mathcal S + p\Lambda(t)}\right)\right)^{3/2}\\
&&\hspace*{3em} + 6v\V\Gamma\sqrt{v\log\left(\frac{\AA}{\V}\sqrt{\mathcal S + p\Lambda(t)}\right)}.
\end{eqnarray*}
\end{lemma}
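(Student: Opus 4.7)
The plan is to transfer the uniform $L^2(\mathbb{Q})$-entropy bound of Assumption \ref{ann:3} to the Lebesgue metric by localising to the support of the functions in $\FF_k$, and then evaluate the three resulting entropy integrals by a common integration-by-parts recursion.

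First I would exploit the fact that every $f\in\FF_k$ is supported in $I_k$ by introducing the probability measure $\mathbb{Q}_k := |I_k|^{-1}\mathds{1}_{I_k}\lebesgue$. For $f,g\in\FF_k$ this gives the rescaling identity $\|f-g\|_{L^2(\lebesgue)} = \sqrt{|I_k|}\,\|f-g\|_{L^2(\mathbb{Q}_k)}$, hence the covering number identity $N(u,\FF_k,\Gamma\|\cdot\|_{L^2(\lebesgue)}) = N(u/(\Gamma\sqrt{|I_k|}),\FF_k,\|\cdot\|_{L^2(\mathbb{Q}_k)})$. From the definition of $I_k$ one obtains $|I_k|\leq 4(p\Lambda(t)+\mathcal S)$, so the whole geometry of $I_k$ is absorbed into the single scale $R:=2\sqrt{p\Lambda(t)+\mathcal S}\geq\sqrt{|I_k|}$.

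Next I would split the integration range in two. On $u\in(0,\Gamma\sqrt{|I_k|})$, Assumption \ref{ann:3} applied to $\mathbb{Q}_k$ yields $\log N(u,\FF_k,\Gamma\|\cdot\|_{L^2(\lebesgue)})\leq v\log(\AA\Gamma R/u)$, while for $u>2\Gamma\V$ the triangle inequality combined with $\|f\|_{L^2(\lebesgue)}\leq\V$ gives a diameter bound that forces $N=1$. Because $\V\leq\sqrt{\mathcal S}\leq R/2$ (Assumption \ref{ann:1}) the two regimes overlap, so each integral reduces to $v^\alpha\int_0^b (\log(a/u))^\alpha du$ for $b:=2\Gamma\V$, $a:=\AA\Gamma R$, and $\alpha\in\{\tfrac12,1,\tfrac32\}$. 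In particular the quantity $x:=\log(a/b)=\log((\AA/\V)\sqrt{\mathcal S+p\Lambda(t)})$ is exactly the expression appearing inside the logarithms in the three claimed bounds. Then I would evaluate $I_\alpha:=\int_0^b (\log(a/u))^\alpha du$ through the integration-by-parts recursion $I_\alpha = bx^\alpha + \alpha I_{\alpha-1}$ with $I_0=b$, which immediately produces $I_1=b(1+x)$ and hence the $\alpha=1$ bound $2v\Gamma\V(1+x)$.

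The $\alpha=\tfrac12$ case is the only genuinely delicate step, since the recursion would produce the non-elementary quantity $I_{-1/2}$. Instead, I would substitute $u=b\e^{-s}$ to rewrite $I_{1/2}=b\int_0^\infty \sqrt{x+s}\,\e^{-s}\d s$ and apply the concavity bound $\sqrt{x+s}\leq\sqrt{x}+\sqrt{s}$ together with $\int_0^\infty\e^{-s}\d s=1$ and $\int_0^\infty\sqrt{s}\,\e^{-s}\d s=\sqrt\pi/2$. The constant $\sqrt\pi/2$ can then be absorbed into $\sqrt{x}$, which is legitimate because $\V\leq\sqrt{\mathcal S}$ and $\AA>\e^2$ together force $x\geq\log\AA\geq 2$; this gives $I_{1/2}\leq 2b\sqrt{x}$, hence the second bound $4\Gamma\V\sqrt{vx}$. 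Feeding this estimate back into the recursion yields $I_{3/2}\leq bx^{3/2}+3b\sqrt{x}$, and multiplication by $v^{3/2}$ produces precisely the two-term bound $2\Gamma\V(vx)^{3/2}+6v\Gamma\V\sqrt{vx}$ stated in the lemma.

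The main obstacle is exactly the $I_{1/2}$ step: the non-integer exponent breaks the clean integration-by-parts recursion, and the exponential substitution plus the \emph{a priori} lower bound $x\geq 2$ are what make the resulting constants as sharp as required. Everything else — the passage from $L^2(\mathbb{Q}_k)$ back to $L^2(\lebesgue)$, the control of $|I_k|$ by $R^2$, the cut-off at $2\Gamma\V$, and the two further applications of the recursion — is essentially bookkeeping.
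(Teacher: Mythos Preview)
Your proof is correct and follows essentially the same route as the paper: localise to a probability measure on $I_k$, invoke Assumption~\ref{ann:3}, cut off at $2\Gamma\V$, and evaluate the three integrals via integration by parts. The only cosmetic difference is in the $\alpha=\tfrac12$ step: the paper simply quotes the inequality $\int_0^c\sqrt{\log(C/x)}\,\d x\le 2c\sqrt{\log(C/c)}$ for $\log(C/c)\ge2$ from \cite{gini09}, whereas you derive it explicitly via the substitution $u=b\e^{-s}$ and the concavity bound $\sqrt{x+s}\le\sqrt x+\sqrt s$ --- but both use the same condition $\AA>\e^2$, $\V\le\sqrt{\mathcal S}$ to secure $x\ge2$, and both then feed the result into the integration-by-parts recursion for $\alpha=\tfrac32$.
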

\begin{proof}
Note that, for $f\in\mathcal F_k$,  
\[
	\|f\|_{L^2(\lebesgue)}\ \leq \ \|f\|_{L^2(\nu_k)} \sqrt{4\mathcal S + 4p\Lambda(t)},\quad \text{ where }\d \nu_k\ =\ \mathds{1}\{I_k \}\d\frac{\lebesgue}{\lebesgue(I_k)}.
\] 
Thus, \eqref{eq:ent} implies that
\begin{eqnarray*}
N\left(u,\mathcal F_k, \Gamma \|\cdot\|_{L^2(\lebesgue)}\right)
\ &\leq& N\left(u\left(\Gamma\sqrt{4\mathcal S + 4p\Lambda(t)}\right)^{-1},\mathcal F_k, \|\cdot\|_{L^2(\nu_k)}\right)\\
 &\leq& \left(\frac{\AA\Gamma}{u}\sqrt{4\mathcal S + 4p\Lambda t}\right)^v,
\end{eqnarray*}
if $u<2\V\Gamma\leq\Gamma\sqrt{4\mathcal S + 4p\Lambda(t)}$. Furthermore, since $\sup_{f,g\in\FF}\|f-g\|_{L^2(\lebesgue)}\leq2\V$, 
it holds that $N(u,\mathcal F_k,\Gamma \|\cdot\|_{L^2(\lebesgue)})\ =\ 1$ for $u\geq 2\V\Gamma$.
Thus, for $\alpha=1$, we can upper bound the entropy integral as follows,
\begin{align*}
\int_0^\infty \log N(u,\mathcal F_k, \Gamma \|\cdot\|_{L^2(\lebesgue)})\d u&\leq\ 
\int_0^{2\V\Gamma} v \log\left(\frac{\AA\Gamma}{u}\sqrt{4\mathcal S + 4p\Lambda(t)}\right)\d u\\
&=\ v\left[u\log\left(\frac{\AA\Gamma}{u}\sqrt{4\mathcal S + 4p\Lambda(t)}\right)\right]_0^{2\V\Gamma} + 2v\V\Gamma\\
&=\ 2v\V\Gamma\left(1+ \log\left(\frac{\AA}{\V}\sqrt{\mathcal S + p\Lambda(t)}\right)\right).
\end{align*}
For $\alpha=2$, it holds
\begin{align*}
\int_0^\infty \sqrt{\log N(u,\mathcal F_k, \Gamma \|\cdot\|_{L^2(\lebesgue)})}\d u&\leq\ 
\int_0^{2\V\Gamma} \sqrt v \sqrt{\log\left(\frac{\AA\Gamma}{u}\sqrt{4\mathcal S + 4p\Lambda(t)}\right)}\d u\\
&\leq\ \sqrt v 4\V\Gamma\left(\log\left(\frac{\AA}{\V}\sqrt{\mathcal S + p\Lambda(t)}\right)\right)^{1/2},
\end{align*}
where the last estimate is due to the fact that 
$\int_0^c\sqrt{\log(C/x)}\d x \leq 2 c \sqrt{\log(C/c)}$ for $\log(C/c)\geq 2$ (see, e.g., \cite{gini09}, p.~591).
This last condition is fulfilled in our situation since $\V\leq\sqrt{\mathcal S}$ and $\AA>\e^2$.
Finally, if $\alpha=2/3$,
\begin{align*}
\int_0^\infty \left(\log N(u,\mathcal F_k, \Gamma \|\cdot\|_{L^2(\lebesgue)})\right)^{3/2}\d u
&\leq\ v^{3/2}\int_0^{2\V\Gamma}\left(\log \left(\frac{\AA\Gamma}{u}\sqrt{4\mathcal S + 4p\Lambda(t)}\right)\right)^{3/2}\d u\\
&=\ v^{3/2}\left.u \left(\log \left(\frac{\AA\Gamma}{u}\sqrt{4\mathcal S + 4p\Lambda(t)}\right)\right)^{3/2}\right|_0^{2\V\Gamma}\\
&\qquad + v^{3/2}\int_0^{2\V\Gamma}\frac{3}{2}\left(\log \left(\frac{\AA}{u}\Gamma\sqrt{4\mathcal S + 4p\Lambda(t)}\right)\right)^{1/2}\d u\\
&\leq\ v^{3/2}2\V\Gamma\left(\log \left(\frac{\AA}{2\V}\sqrt{4\mathcal S + 4p\Lambda(t)}\right)\right)^{3/2} \\
&\qquad+ v^{3/2}\frac{3}{2}4\V\Gamma\left(\log\left(\frac{\AA}{\V}\sqrt{\mathcal S + p\Lambda(t)}\right)\right)^{1/2}.
\end{align*}
\end{proof}

\medskip

\section{Proofs for Section 2}\label{App:B}
\begin{proof}[Proof of Theorem \ref{theo:lt}]


Tanaka's formula (see Proposition 9.2 in \cite{legall16}) yields the local time representation
\begin{align*}
\L_t^a(X)& = \L_t^a(X)\cdot \mathds{1}\left\{\max_{0\leq s\leq t}|X_s|\geq |a|\right\}\\
&= 2\left((X_t-a)^--(X_0-a)^-+\int_0^t\mathds{1}\{X_s\leq a\}\d X_s\right),\hspace*{0.5em} \text{ where } x^-:=\max\left\{-x,\ 0\right\}.
\end{align*}
Since semimartingale local time is c\`adl\`ag in $a$, the $\sup$-norm actually refers to a supremum over the rationals $\mathbb Q$.
In particular, $\|L^\bullet_t(X)\|_\infty$ is measurable.
Furthermore, for any $t>0$ and $p\geq 1$,
\begin{eqnarray}\nonumber
&&\left(\E\left[\left(\sup_{a\in\mathbb{Q}}|\L_t^a(X)|\right)^p\right]\right)\p
= \left(\E\left[\left(\sup_{a\in\mathbb{Q}}\left\{|\L_t^a(X)| \cdot\mathds{1}\left\{\max_{0\leq s\leq t}|X_s|\geq |a|\right\}\right\}\right)^p\right]\right)\p\\\nonumber
&&\hspace*{3em}\leq 2\bigg(\E\bigg[\bigg(\sup_{a\in\mathbb{Q}}\bigg\{|X_t-X_0|+\int_0^t|\d V_s|\\\label{eq25}
&&\hspace*{4em}+ \mathds{1}\left\{\max_{0\leq s\leq t}|X_s|\geq |a|\right\}\Big|\int_0^t\mathds{1}\{X_s\leq a\}\d M_s\Big|\bigg\}\bigg)^p\bigg]\bigg)\p\\ \nonumber
&&\hspace*{3em}\leq 2p\phi_1(t) + 2\bigg(\E\bigg[\sup_{a\in\mathbb Q}\mathds{1}\left\{\max_{0\leq s\leq t}|X_s|\geq |a|\right\}\Big|\int_0^t\mathds{1}\{X_s\leq a\}\d M_s\Big|\bigg\}\bigg)^p\bigg]\bigg)\p,
\end{eqnarray}
where the latter inequality is due to \eqref{eq:p}. 
Recall that $\M^a_t=\int_0^t\mathds{1}\{X_s\leq a\}\d M_s$, $a\in\R$, and note again that \eqref{4.2} and \eqref{eq:p} imply that
\begin{equation}\label{eq27}
\sup_{a\in\mathbb{Q}}\left(\E\left[|\M_t^a|^p\right]\right)\p\  \leq\ \sup_{a\in\mathbb{Q}}  \sqrt 2\overline\bdg \sqrt p \left(\E\left[\langle\M_t^a\rangle^p\right]\right)^{\frac{1}{2p}}
 \leq\  \bdg\sqrt p\phi_2(t),\quad p\geq 1.
\end{equation}
This result provides an upper bound for the expression appearing on the rhs of \eqref{eq:101} in Proposition \ref{thm:dirk}. 
In order to apply this result, we still have to verify  the condition on $\M$, i.e., we have to find a suitable metric structure. 
For analysing the expression
\[\left|\int_0^t\mathds{1}\{a<X_s\leq b\}\d M_s\right|\ =\ \left|\M_t^a-\M_t^b\right|,\quad a\leq b,\]
we require an exponential inequality for the tail probability of these increments. 
We will deduce this inequality by investigating the corresponding moments. 
The derivation of the upper bounds relies heavily on the following auxiliary result.

\medskip

\begin{lemma}[cf.~Lemma 9.5 in \cite{legall16}]\label{lem:9.5}
Consider a continuous semimartingale $X$ satisfying Assumption \ref{M+V_upper_bound}, and write $X=X_0+M+V$ for its canonical decomposition. 
Let $p\geq 1$.
Then, for every $a,b\in\R$ with $a\leq b$ and every $t\geq0$, we have
\[
\E\left[\left(\int_0^t\mathds{1}\{a<X_s\leq b\}\d\langle M\rangle_s\right)^p\right]\ \leq\ 
2(16(b-a))^p\left\{\bdg^p p^{p/2}\phi_2^p(t)\ + \ \E\left[\left(\int_0^t|\d V_s|\right)^p\right]\right\}.
\]
\end{lemma}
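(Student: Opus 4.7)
The plan is to apply an It\^o--Tanaka type identity to a carefully chosen convex function and then combine it with the sharp BDG inequality~\eqref{4.2}. Specifically, I would introduce
\[
F(x)\ :=\ \begin{cases} 0, & x \leq a,\\ (x-a)^2, & a \leq x \leq b,\\ (b-a)^2+2(b-a)(x-b), & x \geq b, \end{cases}
\]
which is convex and $C^1$ with derivative $F'(x)=2((x-a)^+\wedge(b-a))$ and distributional second derivative $F''(x)=2\mathds{1}_{[a,b]}(x)$. The structural feature to exploit is $\|F'\|_\infty\leq 2(b-a)$, so that $F$ is globally $2(b-a)$-Lipschitz.

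Using $\langle X\rangle=\langle M\rangle$ and the It\^o--Tanaka formula for convex $F$, I obtain
\[
\int_0^t\mathds{1}_{[a,b]}(X_s)\,\d\langle M\rangle_s \ =\ F(X_t)-F(X_0)-\int_0^t F'(X_s)\,\d V_s-\int_0^t F'(X_s)\,\d M_s.
\]
Switching between $\mathds{1}_{[a,b]}$ and $\mathds{1}_{(a,b]}$ costs nothing, since the occupation-time formula yields $\int_0^t\mathds{1}\{X_s=a\}\,\d\langle M\rangle_s=\int\mathds{1}\{a\}(y)L_t^y(X)\,\d y=0$ a.s. The Lipschitz bound then gives $|F(X_t)-F(X_0)|\leq 2(b-a)|X_t-X_0|\leq 2(b-a)(|M_t|+\int_0^t|\d V_s|)$, while $|F'|\leq 2(b-a)$ controls the two integral terms. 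Hence
\[
\int_0^t\mathds{1}\{a<X_s\leq b\}\,\d\langle M\rangle_s \ \leq\ 2(b-a)|M_t|+4(b-a)\int_0^t|\d V_s|+\Bigl|\int_0^t F'(X_s)\,\d M_s\Bigr|.
\]

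Finally, taking $L^p$-norms via Minkowski, I would apply~\eqref{4.2} twice: once to $M$, which under Assumption~\ref{M+V_upper_bound} gives $\|M_t\|_p\leq\bdg\sqrt p\,\phi_2(t)$; and once to the martingale $\int_0^\cdot F'(X_s)\,\d M_s$, whose quadratic variation is pointwise dominated by $4(b-a)^2\langle M\rangle_t$, yielding $\bigl\|\int_0^t F'(X_s)\,\d M_s\bigr\|_p\leq 2(b-a)\bdg\sqrt p\,\phi_2(t)$. Combining the three contributions and raising to the $p$-th power via $(x_1+x_2+x_3)^p\leq 3^{p-1}(x_1^p+x_2^p+x_3^p)$ produces the claimed bound with a constant dominated by $2(16(b-a))^p$.

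The calculation itself is routine; the only conceptually delicate point is the global Lipschitz control on $F$, which is what removes any $\phi_1$ contribution from the final bound. A naive estimate of the form $F(X_t)\leq 2(b-a)(X_t-a)^+$ would instead generate an $\|X_t\|_p$ term and hence an unwanted $\phi_1(t)$ factor incompatible with the statement. Ensuring that $F(X_t)-F(X_0)$ is always handled through the Lipschitz increment rather than through separate bounds on $F(X_t)$ and $F(X_0)$ is therefore the main step to get right.
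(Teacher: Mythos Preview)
Your argument is correct and is essentially the standard proof (the one in Le~Gall's Lemma~9.5, which the paper merely cites without reproducing): apply It\^o--Tanaka to the convex $C^1$ function $F$ with $F''=2\mathds{1}_{[a,b]}$, exploit the global Lipschitz bound $\|F'\|_\infty\leq 2(b-a)$, and control the two martingale contributions via the sharp BDG inequality~\eqref{4.2}. The only ingredient the paper adds to Le~Gall's version is precisely this explicit $p$-dependence of the constant, which you handle correctly; your final constant $2\cdot 3^{p-1}(4(b-a))^p\leq 2(16(b-a))^p$ indeed matches the statement.
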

Now, for any $a\leq b\in\R$ and $p\geq 1$, Lemma \ref{lem:9.5} and \eqref{4.2} give
\begin{eqnarray*}
&&\E\left[|\M_t^a-\M_t^b|^p\right]\\
&&\hspace*{0.5em}\leq\ \sqrt{\E\left[\bigg|\int_0^t\mathds{1}\{a<X_s\leq b\}\d M_s\bigg|^{2p}\right]}\\
&&\hspace*{0.5em}\leq\ \bdg^pp^{p/2}\sqrt{\E\left[\left(\int_0^t\mathds{1}\{a<X_s\leq b\}\d \langle M\rangle_s\right)^{p}\right]}\\
&&\hspace*{0.5em}\leq\ 
\text{min}\bigg\{2(16(b-a))^{p/2} \left(\bdg^{3p/2}p^{3p/4}\phi_2^{p/2}(t)+\bdg^pp^{p/2}
\sqrt{\E\left[\left(\int_0^t|\d V_s|\right)^{p}\right]}\right),\ \bdg^pp^{p/2}\phi_2^p(t)\bigg\}\\
&&\hspace*{0.5em}\leq\
\text{min}\left\{2(16(b-a))^{p/2} \left(\bdg^{3p/2}p^{3p/4}\phi_2^{p/2}(t)+\bdg^pp^{p}\phi_1^{p/2}(t)\right),\ \bdg^pp^{p/2}\phi_2^p(t)\right\}
\end{eqnarray*}
such that
\[
\left(\E\left[|\M_t^a-\M_t^b|^p\right]\right)^{1/p}\ \leq\ 
p \min\left\{8\sqrt{|a-b|},\ 1\right\} \left(\bdg \sqrt{\phi_1(t)}+\bdg^{3/2}\sqrt{\phi_2(t)}\right).
\]
Consequently (cf.~Lemma \ref{moment_tail_lemma}), the process $(\M_t^a)_{a\in\R}$ exhibits a subexponential tail behaviour wrt the metric $d_1$, defined as
\[d_1(a,b)\ :=\ \min\left\{8\sqrt{|a-b|},\ 1\right\}\ \e\left(\bdg \sqrt{\phi_1(t)}+\bdg^{3/2}\sqrt{\phi_2(t)}\right),\quad a,b\in\R,\]
that is,
\begin{equation}\label{Mtailbehavior}
\P\left(|\M_t^a-\M_t^b|\geq d_1(a,b)u\right)\ \leq\ \exp(-u),\quad u\geq 1.
\end{equation}
At this point, we would like to apply Proposition \ref{thm:dirk}. 
Since the entire real line $\R$ cannot be covered with a finite number of $d_1$-balls, we will use the maximal inequality \eqref{maxineqX} in order to apply the chaining procedure \emph{locally} on finite intervals.
For setting up the localisation procedure, fix $p_0\geq 1$, and introduce the intervals
\begin{eqnarray*}
A_0^{p_0}&:=& \left[-p_0\Lambda(t),\ p_0\Lambda(t)\right],\\
A_k^{p_0}&:=& \big[-(k+1)p_0\Lambda(t),\ -kp_0\Lambda(t)\big)\cup\big(kp_0\Lambda(t),\ (k+1)p_0\Lambda(t)\big],\quad k\in\N,
\end{eqnarray*}
with $\Lambda(t)\ \equiv\ \e\left(\phi_1(t)+\bdg \phi_2(t)\right)$.

\medskip

\begin{lemma}\label{lem4}
Define $d_1(t):= \e\left(\bdg \sqrt{\phi_1(t)}+\bdg^{3/2}\sqrt{\phi_2(t)}\right)$. For the $d_1$-entropy integrals of $A_k^{p_0}$, $k\in\N_0$, the following bound (not depending on $k$) holds true,
\[
\int_0^\infty \log N(u,A_k^{p_0},d_1)\d u\ \leq\ d_1(t)\left(4+2\log\left(8\sqrt{2p_0\Lambda(t)}\right)\right).
\]
\end{lemma}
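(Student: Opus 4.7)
The plan is to reduce the $d_1$-covering problem on $A_k^{p_0}$ to an elementary Euclidean covering problem on $\R$, exploiting the fact that the total Lebesgue measure of $A_k^{p_0}$ equals $2p_0\Lambda(t)$ irrespective of $k$. First I would note that the metric $d_1(a,b)=\min\{8\sqrt{|a-b|},1\}\cdot d_1(t)$ is bounded above by $d_1(t)$, so $N(u,A_k^{p_0},d_1)=1$ for every $u\geq d_1(t)$. Consequently the improper integral collapses to $\int_0^{d_1(t)}\log N(u,A_k^{p_0},d_1)\,du$.

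Second, for $u\in(0,d_1(t))$, the condition $d_1(a,b)\leq u$ is equivalent to $|a-b|\leq (u/(8 d_1(t)))^2$, so a $d_1$-ball of radius $u$ coincides with a Euclidean interval of radius $r(u):=(u/(8d_1(t)))^2$. Since $A_k^{p_0}$ consists of at most two real intervals whose total Lebesgue measure is exactly $2p_0\Lambda(t)$ (this is where $k$-independence enters), it can be covered by at most
\[
\frac{2p_0\Lambda(t)}{2r(u)}+2 \;=\; \frac{64\,p_0\Lambda(t)\,d_1(t)^2}{u^2}+2
\]
such balls. Using $p_0\Lambda(t)\geq 1$ and $u\leq d_1(t)$, one obtains the crude but convenient uniform bound $N(u,A_k^{p_0},d_1)\leq 128\,p_0\Lambda(t)\,d_1(t)^2/u^2$.

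Third, I would integrate this explicit bound. A direct calculation via $\int_0^{d_1(t)}\log u\,du = d_1(t)\log d_1(t)-d_1(t)$ yields
\[
\int_0^{d_1(t)}\log\!\left(\frac{128\,p_0\Lambda(t)\,d_1(t)^2}{u^2}\right)du \;=\; d_1(t)\bigl(2+\log(128\,p_0\Lambda(t))\bigr)\;=\;d_1(t)\bigl(2+2\log(8\sqrt{2p_0\Lambda(t)})\bigr),
\]
which already beats the claimed bound $d_1(t)(4+2\log(8\sqrt{2p_0\Lambda(t)}))$; the stated inequality thus follows with room to spare.

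There is no substantial obstacle here beyond bookkeeping: the computation is routine once one recognises that $d_1$-balls are Euclidean balls with a square-root rescaling of the radius. The only conceptual point to highlight is the geometry of $A_k^{p_0}$---its Lebesgue measure does not grow with $k$, which is precisely what allows the resulting entropy bound to be uniform in $k$ and feed productively into the localised chaining of Theorem \ref{theo:lt}, where each shell $A_k^{p_0}$ will be weighted by the tail factor $e^{-k/2}$ coming from the maximal inequality \eqref{maxineqX}.
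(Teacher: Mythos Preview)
Your argument is correct and essentially identical to the paper's: both recognise that a $d_1$-ball of radius $u<d_1(t)$ is a Euclidean interval of radius $(u/(8d_1(t)))^2$, cover $A_k^{p_0}$ by counting such intervals against its fixed Lebesgue measure $2p_0\Lambda(t)$, and integrate the resulting logarithmic bound explicitly. The only caveat is your appeal to $p_0\Lambda(t)\geq 1$ when absorbing the additive $+2$; this is not guaranteed in the general semimartingale setting of Theorem~\ref{theo:lt}, but it is harmless here since the slack between your constant $2$ and the claimed constant $4$ absorbs the extra term anyway.
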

\begin{proof}
Fix $k\in\N_0$. Given any $\ep\in(0,d_1(t))$, a decomposition of the sets $A_k^{p_0}$ into intervals of length $(\ep/(8d_1(t)))^2$ gives
$N(\ep,A_k,d_1)\ \leq\ 2p_0\Lambda(t)\left(\ep/(8d_1(t))\right)^{-2}+1$.
Moreover, it is clear that $N(\ep,A_k^{p_0},d_1)=1$ for any $\ep\geq d_1(t)$. 
For the entropy integral, we thus obtain the estimate
\begin{eqnarray*}
\int_0^\infty\log N(u,A_k^{p_0},d_1)\d u&\leq& 2\int_0^{d_1(t)}\log\left(\sqrt{2p_0\Lambda(t)}\ \frac{8d_1(t)}{u}+1\right)\d u\\
&\leq& d_1(t)\left(4+2\log\left(8\sqrt{2p_0\Lambda(t)}\right)\right).
\end{eqnarray*} 
\end{proof}
Taking into account Proposition \ref{thm:dirk}, \eqref{eq27}, \eqref{Mtailbehavior} and the previous lemma allow to deduce the local result. 
For every $k\in\N_0$, $p_0,p\geq 1$, we obtain
\[
\bigg(\E\bigg[\sup_{a\in A_k^{p_0}}\Big|\int_0^t\mathds{1}\{X_s\leq a\}\d M_s\Big|^p\bigg]\bigg)\p\
\leq\ C_1 d_1(t)\left(4+2\log\left(8\sqrt{2p_0\Lambda(t)}\right)\right) +  2\bdg\sqrt p\phi_2(t).
\]
Exploiting the fact that the probability that the support of the local time intersects with the sets $A_k^{p_0}$ vanishes, we can extend this result to the whole real line. 
Precisely, we use that, for any $k\in\N$, setting $u\equiv kp_0$, 
\[\e\left(u\phi_1(t)+ \bdg\sqrt u\phi_2(t)\right)\ 
\leq\ kp_0\e\left(\phi_1(t)+\bdg\phi_2(t)\right)\ =\ kp_0\Lambda(t),\]
and consequently, according to Lemma \ref{maximal_inequality_X},
\[
\P\left(\max_{0\leq s\leq t}|X_s|>kp_0\Lambda(t)\right)\ \leq\ \exp\left(-kp_0\right).
\]
Moreover, it holds
\[\sum_{k=1}^\infty\exp\left(-\frac{k}{2}\right)=
\sum_{k=0}^\infty\exp\left(-\frac{(k+1)}{2}\right)\ \leq\ \int_0^\infty\exp\left(-\frac{x}{2}\right)\d x\ =\ 2\int_0^\infty\e^{-y}\d y\ =\ 2.\]
Coming back to the decomposition \eqref{eq25}, we finish the proof by noting that, for any $p_0\geq 1$,
\begin{eqnarray*}
&&\left(\E\left[\left(\sup_{a\in\mathbb{Q}}\left\{|\M_t^a|\cdot \mathds{1}\left\{\max_{0\leq s\leq t}|X_s|\geq |a|\right\}\right\}\right)^{p_0}\right]\right)^{\frac{1}{p_0}}\\
&&\hspace*{3em}\leq\ \left(\E\left[\left(\sup_{a\in A_0^{p_0}}|\M_t^a|\right)^{p_0}\right]\right)^{\frac{1}{p_0}}\\
&&\hspace*{6em} + \sum_{k=1}^\infty\left(\E\left[\left(\sup_{a\in A_k^{p_0}}
\left\{|\M_t^a|\cdot\mathds{1}\left\{\max_{0\leq s\leq t}|X_s|\geq |kp_0\Lambda(t)|\right\}\right\}\right)^{p_0}\right]\right)^{\frac{1}{p_0}}\\
&&\hspace*{3em}\leq\ \left(\E\left[\left(\sup_{a\in A_0^{p_0}}|\M_t^a|\right)^{p_0}\right]\right)^{\frac{1}{p_0}}\\
&&\hspace*{6em} + \sum_{k=1}^\infty\Bigg[\left(\E\left[\sup_{a\in A_k^{p_0}}
|\M_t^a|^{2p_0}\right]\right)^{\frac{1}{2p_0}}\P\left(\max_{0\leq s\leq t}|X_s|\geq |kp_0\Lambda(t)|\right)^{\frac{1}{2p_0}}\Bigg]\\
&&\hspace*{3em}\leq\  C_1 d_1(t)\left(4+2\log\left(8\sqrt{2p_0\Lambda(t)}\right)\right) +  2\bdg\sqrt p_0\phi_2(t)\\
&&\hspace*{6em} +  \sum_{k=1}^\infty \exp\left(-\frac{k}{2}\right)\Bigg[C_1 d_1(t)\left(4+2\log\left(8\sqrt{2p_0\Lambda(t)}\right)\right) +  2\bdg\sqrt{2p_0}\phi_2(t)\Bigg]\\
&&\hspace*{3em}\leq\ 3C_1 d_1(t)\left(4+2\log\left(8\sqrt{2p_0\Lambda(t)}\right)\right) +  10\bdg\sqrt p_0\phi_2(t).
\end{eqnarray*}
Summing up, we can conclude that, for any $p_0\geq1$,
\begin{align*}
&\left(\E\left[\left(\sup_{a\in\mathbb{Q}}|\L_t^a(X)|\right)^{p_0}\right]\right)^{\frac{1}{p_0}}\\
&\hspace*{3em}\leq 2p_0\phi_1(t)+3C_1 d_1(t)\left(4+2\log\left(8\sqrt{2p_0\Lambda(t)}\right)\right) +  10\bdg\sqrt p_0\phi_2(t)\\
&\hspace*{3em}\leq \kappa\left(p_0\phi_1(t) + \left(\sqrt{\phi_1(t)} + \sqrt{\phi_2(t)}\right)\log(2p_0\Lambda(t)) + \sqrt{p_0}\phi_2(t)\right),
\end{align*}
for a positive constant $\kappa$ depending only on $\bdg, C_1$.
\end{proof}

\medskip

\section{Proofs for Section \ref{sec:cmei}}\label{sec:B}
\begin{proof}[Proof of Proposition \ref{prop:mom}]
Setting for any continuous function $g\colon \R\to\R$
\begin{align}\label{def:h}
\h^g(u)&:=\ \frac{2}{\sigma^2(u)\rho_b(u)}\int_\R g(y)\rho_b(y)\left(\mathds{1}\{u>y\}-F_b(u)\right)\d y, \hspace*{2em} u\in\R,\\\nonumber
G^g(z)&:=\ \int_0^z \h^g(u)\d u, \hspace*{16.5em}\ \ z\geq 0,
\end{align}
we can apply It\^o's formula to $G^{fb_0}(\cdot)$ and $\X$ to obtain
\begin{align*}\nonumber
\int_{X_0}^{X_t}\h^{fb_0}(u)\d u &= \int_0^t \h^{fb_0}(X_s)b(X_s)\d s+\int_0^t \h^{fb_0}(X_s)\sigma(X_s)\d W_s+\frac{1}{2}\int_0^t(G^{fb_0})''(X_s)\d s\\
&=\int_0^t \h^{fb_0}(X_s)\sigma(X_s)\d W_s+\int_0^t\Big((fb_0)(X_s)-\E_b\left[(fb_0)(X_0)\right]\Big)\d s.
\end{align*}
This gives the representation \eqref{eq:martre} for the specifications $\Ma_t^f:=-\int_0^t \h^{fb_0}(X_s)\sigma(X_s)\d W_s$ and $\rd_t^f:=\int_{X_0}^{X_t}\h^{fb_0}(u)\d u$. 
The next step consists in bounding the function $\h^{fb_0}(\cdot)$. Note first that the conditions on the class $\Sigma$ ensure that there exists a constant $K=K(\C,A,\gamma,\overline\nu,\underline\nu)$ such that, for any $b\in\Sigma(\C,A,\gamma,\sigma)$,
\[
\sup_{x\geq 0}\frac{1-F_b(x)}{\sigma^2(x)\rho_b(x)}\ \leq\ K\quad\text{ and } \quad\sup_{x\leq0}\frac{F_b(x)}{\sigma^2(x)\rho_b(x)}\ \leq\ K.
\] 
For $y\in[0,A]$, we have
\begin{eqnarray*}
\sigma^{2}(y)\rho_b(y)&=& C_{b,\sigma}^{-1}\exp\left(2\int_0^y\frac{b(v)}{\sigma^2(v)}\d v\right)\ \geq\ C_{b,\sigma}^{-1}\exp\left(-2\int_0^y\frac{|b(v)|}{\sigma^2(v)}\d v\right)\\
&\geq& C_{b,\sigma}^{-1}\exp\left(-2\underline\nu^{-2}\C\int_0^y(1+v)\d v\right)\ \geq \ C_{b,\sigma}^{-1}\exp\left(-2\underline\nu^{-2}\C\int_0^A (1+v)\d v\right)\\
&=& C_{b,\sigma}^{-1} \e^{-\underline\nu^{-2}\C(2A+A^2)}.
\end{eqnarray*} 
Since the same arguments apply to $y\in [-A,0]$, it holds
\[(\sigma^2\rho_b)^{-1}(y)\ \leq\  C_{b,\sigma} \e^{\underline\nu^{-2}\C(2A+A^2)},\quad y\in [-A,A].\]
We start with analysing the general case. Let $f\in\mathcal F\cup\overline{\mathcal F}$, and note that $\lebesgue(\supp(f))\leq \mathcal S$.
For any $u\in\R$ and the function $\h^{fb_0}$ defined according to \eqref{def:h}, we have
\begin{eqnarray*}
|\h^{fb_0}(u)|^2&\leq&4\int_\R f^2(y)\d y\int_{\supp(f)}
C^2(1+|y|^\eta)^2\rho_b^2(y)\frac{(\mathds{1}\{u>y\}-F_b(u))^2}{\sigma^4(u)\rho_b^2(u)}\d y\\
&=& 4C^2\|f\|_{L^2(\lebesgue)}^2\Bigg\{\frac{(1-F_b(u))^2}{\sigma^4(u)\rho_b^2(u)}\int_{-\infty}^u\mathds{1}\{y\in\supp(f)\}(1+|y|^\eta)^2\rho_b^2(y)\d y\\
&&\hspace*{8em} +\ \frac{F_b^2(u)}{\sigma^4(u)\rho_b^2(u)}\ \int_u^{\infty}\mathds{1}\{y\in\supp(f)\}(1+|y|^\eta)^2\rho_b^2(y)\d y\Bigg\}.
\end{eqnarray*}
Now, for $u>A$,  
\begin{eqnarray*}
|\h^{fb_0}(u)|^2\ &\leq&
 4C^2\|f\|_{L^2(\lebesgue)}^2\Bigg\{2K^2\mathcal L\mathcal S \left(1+\sup_{x\in\R} |x|^{2\eta}\rho_b(x)\right)\\
&&\hspace*{0.5em} +\ \underline\nu^{-2}\int_u^{\infty}\mathds{1}\{y\in\supp(f)\}\exp\left(4\int_u^y\frac{b(z)}{\sigma^2(z)}\d z\right)\ \left(2+2|y|^{2\eta}\right)\d y\Bigg\} \\
&\leq&  4C^2\|f\|_{L^2(\lebesgue)}^2\Bigg\{2K^2\mathcal L\mathcal S \left(1+\sup_{x\in\R} |x|^{2\eta}\rho_b(x)\right)\\
&&\hspace*{0.5em} +\ \underline\nu^{-2}\int_u^{\infty}\mathds{1}\{y\in\supp(f)\}\max\{2^{2\eta},2\}\e^{-4\gamma(y-u)} \left(1+|y-u|^{2\eta}\right)\d y\\
&&\hspace*{0.5em} +\ \underline\nu^{-2}\int_u^{\infty}\mathds{1}\{y\in\supp(f)\}\max\{2^{2\eta},2\} \e^{-4\gamma(y-u)} u^{2\eta}\d y\Bigg\} \\
&\leq&  4C^2\|f\|_{L^2(\lebesgue)}^2\Bigg\{2K^2\mathcal L\mathcal S \left(1+\sup_{x\in\R} |x|^{2\eta}\rho_b(x)\right)\\
&&\hspace*{0.5em} +\ \underline\nu^{-2}\mathcal S \max\{2^{2\eta},2\} \sup_{x\geq 0}\left(\exp(-4\gamma x)x^{2\eta}\right)+ \underline\nu^{-2}\mathcal S\max\{2^{2\eta},2\}(1+u^{2\eta})\Bigg\} \\
&\leq&\ 4C^2\|f\|_{L^2(\lebesgue)}^2\max\{2^{2\eta},2\}\
\mathcal S\Bigg\{2K^2\mathcal L \left(1+\sup_{x\in\R} |x|^{2\eta}\rho_b(x)\right)\\
&&\hspace*{.5em}+\ \underline\nu^{-2}\left(\sup_{x\geq0}\left(\exp(-4\gamma x)x^{2\eta}\right)+1+u^{2\eta}\right)\Bigg\}.
\end{eqnarray*}
The case $u<-A$ can be treated analogously. 
For $-A\leq u\leq A$, it holds
\begin{eqnarray*}
|\h^{fb_0}(u)|^2\ &\leq & 4C^2\|f\|_{L^2(\lebesgue)}^2 \sup_{-A\leq x\leq A}\frac{4\mathcal L\mathcal S}{\sigma^4(x)\rho_b^2(x)}
\ \left(1+\sup_{x\in\R}|x|^{2\eta}\rho_b(x)\right)\\
&\leq& 16C^2\|f\|_{L^2(\lebesgue)}^2\mathcal L\mathcal S C_{b,\sigma}^2 \e^{2\underline\nu^{-2}}\C(2A+A^2)\left(1+\sup_{x\in\R}|x|^{2\eta}\rho_b(x)\right).
\end{eqnarray*}
Thus, for any $u\in\R$, $f\in\mathcal F\cup\overline{\mathcal F}$ and $\overline{\Lambda}_{\prox}$ defined according to \eqref{def:lprox}, 
\begin{equation}\label{Abschaetzung_h}
|\h^{fb_0}(u)|^2\ \leq\ \overline{\Lambda}_{\operatorname{\prox}}^2\mathcal S\|f\|_{L^2(\lebesgue)}^2\ \left(1+|u|^{2\eta}\right).
\end{equation}
For any $p\geq 2$, it now follows from \eqref{4.2}, \eqref{Abschaetzung_h} and \eqref{moment_bounds_invariant_measure} that
\begin{eqnarray}\label{c62}
\E_b\left[\Big|\frac{1}{\sqrt t}\Ma_t^{f}\Big|^p\right] &\leq& 
\bdg^pp^{p/2}\E_b\left[\left(\frac{1}{t}\int_0^t(\h^{fb_0})^2(X_s)\sigma^2(X_s)\d s\right)^{p/2}\right]\\\nonumber
&\leq&\overline\nu^p\bdg^p p^{p/2}\left(\overline{\Lambda}_{\operatorname{\prox}}^2\mathcal S\|f\|_{L^2(\lebesgue)}^2\right)^{p/2}t^{-p/2}\E_b\left[\left(\int_0^t\left(1+|X_s|^{2\eta}\right)\d s\right)^{p/2}\right]\\\nonumber
&\leq& \|f\|_{L^2(\lebesgue)}^p {\overline\nu}^p\bdg^p p^{p/2}\left(\overline{\Lambda}_{\operatorname{\prox}}^2\mathcal S\right)^{p/2} \left(1+\mom^{2\eta }(\eta p)^{2\eta}\right)^{p/2}\\\nonumber
&\leq& \|f\|_{L^2(\lebesgue)}^p {\overline\nu}^p\bdg^p\left(\overline{\Lambda}_{\operatorname{\prox}}^2\mathcal S\right)^{p/2}p^{\eta p+p/2}\left(1+(\mom\eta)^{2\eta }\right)^{p/2}.
\end{eqnarray}
For $1\leq p<2$, one obtains
\begin{eqnarray*}
\E_b\left[\Big|\frac{1}{\sqrt t}\Ma_t^{f}\Big|^p\right] &\leq& 
(2p)^{p/2}\bdg^p\sqrt{\E_b\left[\left(\frac{1}{t}\int_0^t(\h^{fb_0})^2(X_s)\sigma^2(X_s)\d s\right)^{p}\right]}\\
&\leq&(2p)^{p/2}{\overline\nu}^p\bdg^p \left(\overline{\Lambda}_{\operatorname{\prox}}^2\mathcal S\|f\|_{L^2(\lebesgue)}^2\right)^{p/2}t^{-p/2}\sqrt{\E_b\left[\left(\int_0^t(1+|X_s|^{2\eta})\d s\right)^{p}\right]}\\
&\leq&(2p)^{p/2} \|f\|_{L^2(\lebesgue)}^p {\overline\nu}^p\bdg^p\left(\overline{\Lambda}_{\operatorname{\prox}}^2\mathcal S\right)^{p/2} \left(1+\mom^{2\eta }(\eta 2p)^{2\eta}\right)^{p/2}\\
&\leq&(2p)^{p/2+\eta p}|f\|_{L^2(\lebesgue)}^p {\overline\nu}^p\bdg^p\left(\overline{\Lambda}_{\operatorname{\prox}}^2\mathcal S\right)^{p/2}\left(1+(\mom\eta)^{2\eta }\right)^{p/2}.
\end{eqnarray*}
For bounding the remainder term, we start by noting that \eqref{Abschaetzung_h} implies the upper bound 
$\sup_{f\in\FF}|\h^{fb_0}(u)| \leq\overline{\Lambda}_{\operatorname{\prox}}\mathcal S(1+|u|^{\eta})$.
Consequently, for any $p\geq 1$,
\begin{align*}
\left\|\sup_{f\in\FF}\left|\int_{X_0}^{X_t}\h^{fb_0}(u)\d u\right|\right\|_p
&\leq \mathcal S \overline\Lambda_{\operatorname{\prox}}\left( 2\|X_0\|_p + \frac{2}{\eta +1}\|X_0^{\eta+1}\|_p\right)\\
&\leq 2\mathcal S \overline\Lambda_{\operatorname{\prox}}\left(\mom p + \frac{1}{\eta+1}\mom^{\eta+1}(\eta+1)^{\eta+1} p^{\eta+1}\right)\\
&\leq 4\mathcal S\overline\Lambda_{\operatorname{\prox}}\max\left\{\mom^{\eta+1},\ 1\right\}(\eta+1)^{\eta} p^{\eta+1}.
\end{align*}

We now turn to the particular case $b_0=b$.
For this case, one could use the above results with $\eta=1$.
However, one obtains better estimates by exploiting the relation between $\rho_b$ and $b$. 
We start with considering the martingale part. 
Let $f,g\in\mathcal F$, and let $x_f,x_g\in\R$ such that $\supp(f-g)\subset [x_f,x_f+\mathcal S]\cup [x_g,x_g+\mathcal S]$. 
Then, for any $u\in\R$,
\begin{eqnarray*}
&&|\h^{(f-g)b}(u)|^2\\
&&\hspace*{2em}\leq 4\int_\R (f-g)^2(y)\d y\ \int_\R \mathds{1}\{y\in\supp(f-g)\}b^2(y)\rho_b^2(y)\frac{(\mathds{1}\{u>y\}-F_b(u))^2}{\sigma^4(u)\rho_b^2(u)}\d y\\
&&\hspace*{2em}\leq 4\|f-g\|_{L^2(\lebesgue)}^2\Bigg\{ \int_\R \mathds{1}\{y\in [x_f,x_f+\mathcal S]\}b^2(y)\rho_b^2(y)\frac{(\mathds{1}\{u>y\}-F_b(u))^2}{\sigma^4(u)\rho_b^2(u)}\d y\\
&&\hspace*{9em}+ \int_\R \mathds{1}\{y\in [x_g,x_g+\mathcal S]\}b^2(y)\rho_b^2(y)\frac{(\mathds{1}\{u>y\}-F_b(u))^2}{\sigma^4(u)\rho_b^2(u)}\d y\Bigg\}.
\end{eqnarray*}
For $u>A$, and any $x\in\R,$ exploiting the relation $(\sigma^2\rho_b)'=2b\rho_b$, it holds 
\begin{align*}
&\int_\R \mathds{1}\{y\in[x,x+\mathcal S]\}b^2(y)\rho_b^2(y)\frac{(\mathds{1}\{u>y\}-F_b(u))^2}{\sigma^4(u)\rho_b^2(u)}\d y\\
&\qquad\leq\ \frac{(1-F_b(u))^2}{\sigma^4(u)\rho_b^2(u)}\int_{-\infty}^u \mathds{1}\{y\in[x,x+\mathcal S]\}|b^2(y)\rho^2_b(y)|\d y\\
&\hspace*{4em}+\frac{1}{4\sigma^4(u)\rho_b^2(u)}\int_u^\infty \mathds{1}\{x\leq y\leq x+\mathcal S\}|b(y)||\left(\sigma^4\rho^2_b\right)'(y)|\sigma^{-2}(y)\d y\\ 
&\qquad\leq \ \sup_{z\geq 0}\frac{(1-F_b(z))^2}{\sigma^4(z)\rho_b^2(z)}\frac{1}{4}\ML^2\mathcal S
+\mathds{1}\{u\leq x\}\frac{\underline\nu^{-2}\mathbb C(\sigma^2\rho_b)^2(x)}{4(\sigma^2\rho_b)^2(u)}(1+x+\mathcal S) \\
&\hspace*{4em}+\, \mathds{1}\{u>x\}\frac{\underline\nu^{-2}\mathbb C(\sigma^2\rho_b)^2(u)}{4(\sigma^2\rho_b)^2(u)}(1+u+\mathcal S)\\ 
&\qquad\leq\ \frac{K^2\ML^2\mathcal S}{4}
+\mathds{1}\{u\leq x\}\frac{\underline\nu^{-2}\mathbb C}{4}\left(\e^{-4\gamma(x-u)}(1+x-u+\mathcal S)+\e^{-4\gamma (x-u)}u\right)\\
&\hspace*{4em}+\, \frac{\underline\nu^{-2}\mathbb C}{4}(1+u+\mathcal S)\\ 
&\qquad\leq\ \frac{K^2\ML^2\mathcal S}{4}+\frac{\underline\nu^{-2}\mathbb C}{2}\left(1+ u +\mathcal S + \sup_{z\geq 0}\left(\exp(-4\gamma z)z\right)\right). 
\end{align*}
For $u<-A$,
\begin{align*}
&\int_\R \mathds{1}\{y\in(x,x+\mathcal S)\}b^2(y)\rho_b^2(y)\frac{(\mathds{1}\{u>y\}-F_b(u))^2}{\sigma^4(u)\rho_b^2(u)}\d y\\
&\qquad\leq\ \frac{1}{4\sigma^4(u)\rho_b^2(u)}\int_{-\infty}^u \mathds{1}\{y\in(x,x+\mathcal S)\}|b(y)||\left(\sigma^4\rho^2_b\right)'(y)|\sigma^{-2}(y)\d y\\
&\hspace*{4em}+ \, \frac{F_b^2(u)}{\sigma^4(u)\rho_b^2(u)}\int_u^\infty \mathds{1}\{x\leq y\leq x+\mathcal S\}b^2(y)\rho_b^2(y)\d y\\ 
&\qquad\leq\ \mathds{1}\{u\geq x+\mathcal S\}\frac{\underline\nu^{-2}\mathbb C(\sigma^2\rho_b)^2(x+\mathcal S)}{4\rho_b^2(u)}(1+|x|) \\
&\hspace*{4em}+\, \mathds{1}\{u<x+\mathcal S\}\frac{\underline\nu^{-2}\mathbb C(\sigma^2\rho_b)^2(u)}{4(\sigma^2\rho_b)^2(u)}(1+|u|+\mathcal S) +  \sup_{z\leq 0}\frac{F_b^2(z)}{\sigma^4(z)\rho_b^2(z)}\frac{1}{4}\ML^2\mathcal S\\
&\qquad\leq\ \mathds{1}\{u\geq x+\mathcal S\}\Bigg(\frac{\underline\nu^{-2}\mathbb C}{4}\e^{-4\gamma(u-(x+\mathcal S)}(1+u-(x+\mathcal S)) + \frac{\underline\nu^{-2}\mathbb C}{4}\e^{-4\gamma(x-u)}(|u|+\mathcal S)\Bigg)\\
&\hspace*{4em}+\, \frac{\underline\nu^{-2}\mathbb C}{4}(1+|u|+\mathcal S)+\frac{K^2\ML^2\mathcal S}{4}\\ 
&\qquad\leq\ \frac{K^2\ML^2\mathcal S}{4}+\frac{\underline\nu^{-2}\mathbb C}{2}\left(1+ |u| +\mathcal S + \sup_{z\geq 0}\left(\exp(-4\gamma z)z\right) \right). 
\end{align*}
Finally, for $u\in[-A,A]$,
\begin{align*}
|\h^{(f-g)b}(u)|^2\ &\leq\ 4\|f-g\|_{L^2(\lebesgue)}^2\bigg\{\frac{(1-F_b(u))^2}{\sigma^4(u)\rho_b^2(u)}\int_{-\infty}^u\mathds{1}\{y\in\supp(f-g)\}b^2(y)\rho_b^2(y)\d y\\
&\hspace*{6em} +\ \frac{F_b^2(u)}{\sigma^4(u)\rho_b^2(u)}\ \int_u^{\infty}\mathds{1}\{y\in\supp(f-g)\}b^2(y)\rho_b^2(y)\d y\bigg\}\\
&\leq\ 4\|f-g\|_{L^2(\lebesgue)}^2 \sup_{-A\leq z\leq A}\frac{\ML^2\mathcal S}{2\sigma^4(z)\rho_b^2(z)}\\
& \leq\ 4\|f-g\|_{L^2(\lebesgue)}^2\ML^2\mathcal S C^2_{b,\sigma} \e^{2\underline\nu^{-2}\C(2A+A^2)}.
\end{align*}
Summing up,
$|\h^{(f-g)b}(u)|^2 \leq\overline{\Gamma}^2_{\prox} \|f-g\|_{L^2(\lebesgue)}^2(1+|u|+\mathcal S)$.
The same arguments give, for any $f\in\mathcal F$,
\[|\h^{fb}(u)|^2\ \leq\  \overline{\Gamma}_{\prox}^2 \|f\|_{L^2(\lebesgue)}^2(1+|u|+\mathcal S).\]
Similarly to \eqref{c62}, we can conclude for all $f\in\mathcal F\cup\overline{\mathcal F}$, $p\geq 2$,
\begin{eqnarray*}
 \E_b\left[\Big|\frac{1}{\sqrt t}\Ma_t^{f}\Big|^p\right] 
&\leq& \bdg^pp^{p/2}t^{-p/2}\E_b\left[\left(\int_0^t(\h^{fb})^2(X_s)\sigma^2(X_s)\d s\right)^{p/2}\right]\\
&\leq& {\overline\nu}^p\bdg^pp^{p/2}t^{-p/2}\E_b\left[\left(\int_0^t\overline{\Gamma}^2_{\prox} \|f\|_{L^2(\lebesgue)}^2(1+|X_s|+\mathcal S)\d s\right)^{p/2}\right]\\
&\leq&\overline{\Gamma}_{\prox}^p \|f\|_{L^2(\lebesgue)}^p {\overline\nu}^p\bdg^pp^{p/2}t^{-p/2} \E_b\left[\left(\int_0^t(1+|X_s|+\mathcal S)\d s\right)^{p/2}\right]\\
&\leq&\overline{\Gamma}_{\prox}^p \|f\|_{L^2(\lebesgue)}^p {\overline\nu}^p\bdg^pp^{p}(1+\mathcal S+\mom)^{p/2}.
\end{eqnarray*}
For $1\leq p<2$, it holds
\begin{eqnarray*}
\E_b\left[\Big|\frac{1}{\sqrt t}\Ma_t^{f}\Big|^p\right]
&\leq& (2p)^{p/2}\bdg^p t^{-p/2}\sqrt{\E_b\left[\left(\int_0^t(\h^{fb})^2(X_s)\sigma^2(X_s)\d s\right)^{p}\right]}\\
&\leq& (2p)^{p/2}  {\overline\nu}^p\bdg^p t^{-p/2}\sqrt{\E_b\left[\left(\int_0^t\overline{\Gamma}^2_{\prox} \|f\|_{L^2(\lebesgue)}^2(1+|X_s|+\mathcal S)\d s\right)^{p}\right]}\\
&\leq&2^{p/2}\overline{\Gamma}_{\prox}^p \|f\|_{L^2(\lebesgue)}^p {\overline\nu}^p\bdg^pp^{p}(1+\mathcal S+\mom)^{p/2}.
\end{eqnarray*}
Hence, we have shown for any $p\geq 1$
\[
\left(\E_b\left[\Big|\frac{1}{\sqrt t}\Ma_t^{f}\Big|^p\right]\right)^{\p}\ \leq\ \sqrt 2\overline{\Gamma}_{\prox} \|f\|_{L^2(\lebesgue)} \overline\nu\bdg (1+\mom+\mathcal S)^{1/2} p.
\]
For bounding the remainder term, let $f\in\mathcal F$. We start by decomposing $|\h^{fb}|\leq 2A_1+2A_2$, with
\[
A_1(u)\ :=\ \frac{1-F_b(u)}{\sigma^2(u)\rho_b(u)}\ \left|\int_{-\infty}^u (fb)(y)\rho_b(y)\d y\right|,\,\,\,
A_2(u)\ :=\ \frac{F_b(u)}{\sigma^2(u)\rho_b(u)}\ \left|\int_{u}^\infty (fb)(y)\rho_b(y)\d y\right|.
\]
For $u\geq 0$,
\begin{eqnarray*}
A_1(u)&\leq& K\Big(\int_{-\infty}^{-A}|f(y)|\frac{(\sigma^2\rho_b)'(y)}{2}\d y + \int_{-A}^A|f(y)b(y)|\rho_b(y)\d y\\
&&\hspace*{2em}- \mathds{1}\{u>A\}\int_A^u \frac{(\sigma^2\rho_b)'(y)}{2}|f(y)|\d y\Big)\\
&\leq& K U\left(\frac{\overline\nu^2\ML}{2} + \mathbb C(1+A)\mu_b([-A,A]) + \overline\nu^2\ML   \right)\ \leq\ K U\left(2\overline\nu^2\ML + \mathbb C(1+A)\right),\\
A_2(u)&\leq& U\left(\mathds{1}\{u\leq A\} C_{b,\sigma} \e^{\underline\nu^{-2}\C(2A+A^2)}\left(\int_u^A \mathbb C(1+A)\ML \d y -\int_A^\infty\frac{(\sigma^2\rho_b)'(y)}{2}\d y \right)\right)\\
&&\hspace*{0em}- U\mathds{1}\{u>A\}\frac{1}{\sigma^2(u)\rho_b(u)}\int_u^\infty \frac{(\sigma^2\rho_b)'(y)}{2}\d y \\
&\leq&  U\left(C_{b,\sigma} \e^{\underline\nu^{-2}\C(2A+A^2)}\left(A \mathbb C(1+A)\ML +\frac{\overline\nu^2\ML}{2}\right)\right) + \frac{1}{2}U.
\end{eqnarray*}
For $u\leq 0$,
\begin{eqnarray*}
A_1(u)&\leq&  U\left(\mathds{1}\{u\geq - A\} C_{b,\sigma} \e^{\underline\nu^{-2}\C(2A+A^2)}\left(\int_{-A}^u \mathbb C(1+A)\ML \d y +\int_{-\infty}^{-A}\frac{(\sigma^2\rho_b)'(y)}{2}\d y \right)\right)\\
&&\hspace*{0em} +\,\, U\mathds{1}\{u<-A\}\frac{1}{\sigma^2(u)\rho_b(u)}\int_{-\infty}^u \frac{(\sigma^2\rho_b)'(y)}{2}\d y \\
&\leq& U\left(C_{b,\sigma} \e^{\underline\nu^{-2}\C(2A+A^2)}\left(A \mathbb C(1+A)\ML +\frac{\overline\nu^2\ML}{2}\right)\right) + \frac{1}{2} U,\\
A_2(u)&\leq& K U\left(\mathds{1}\{u\leq -A\}\int_{u}^{-A}\frac{(\sigma^2\rho_b)'(y)}{2}\d y +  \int_{-A}^A|b(y)|\rho_b(y)\d y-\int_A^\infty \frac{(\sigma^2\rho_b)'(y)}{2}\d y\right)\\
&\leq& K U\left(\overline\nu^2\ML  + \mathbb C(1+|A|)\mu_b([-A,A]) + \frac{\overline\nu^2\ML}{2}\right)\ \leq\ K U\left(2\overline\nu^2\ML + \mathbb C(1+A)\right).
\end{eqnarray*}
We have thus shown that 
\begin{eqnarray*}\sup_{f\in\mathcal F}
\left\|\h^{fb}\right\|_\infty\ &\leq&\ 4K U\left(2\overline\nu^2\ML + \mathbb C(1+A)\right)\\
&& +\,\, 4 U\left(C_{b,\sigma} \e^{\underline\nu^{-2}\C(2A+A^2)}\left(A \mathbb C(1+A)\ML +\frac{\overline\nu^2\ML}{2}\right)\right) + 2 U,\end{eqnarray*}
and, finally, 
\begin{eqnarray*}
\left(\E_b\left[\left\|\rd_t^{f}\right\|_\FF^p\right]\right)\p
&=&\left(\E_b\left[\bigg\|\int_{X_0}^{X_t}\h^{fb}(u)\d u\bigg\|_\FF^p\right]\right)\p\ \leq\ \sup_{f\in\mathcal F}\left\|\h^{fb}\right\|_\infty\left(\E_b\left[\left|X_t-X_0\right|^p\right]\right)\p\\
&\leq&2 \mom\,p\sup_{f\in\mathcal F}\left\|\h^{fb}\right\|_\infty\ \leq\ \Gamma_{\prox} p.
\end{eqnarray*}
\end{proof}

\medskip

\begin{proof}[Proof of Theorem \ref{theo:bernsup}]
Under the given assumptions, Proposition \ref{prop:mom} implies the decomposition 
\[\G^{b_0}_t(f)\ =\ t^{-1/2}\Ma_t^f+t^{-1/2}\rd_t^f,\quad t>0.\]
For $b_0\equiv 1$, we further obtain, for any $p\geq1$, $f\in\FF\cup\overline\FF$,
\[
\left(\E_b\left[|\Ma_t^{f}|^p\right]\right)\p\ \leq\ \ps_1\sqrt{pt\mathcal S}\|f\|_{L^2(\lebesgue)}, \quad 
\left(\E_b\left[\|\rd_t^{fb}\|_\FF^p\right]\right)\p\ \leq\ \ps_2 p\mathcal{S}.\]
This corresponds to the case $\alpha=2$ in Theorem \ref{theo:1} which then yields $(\mathbf{I})$.
For $b_0=b$, the upper bounds on the $p$-th moments, $p\geq 1$, of the martingale and remainder term are specified as
\[
\left(\E_b\left[|\Ma_t^{f}|^p\right]\right)\p\ \leq\ \ps_1^b p^{\frac{3}{2}}\sqrt {t\mathcal S}\|f\|_{L^2(\lebesgue)},\  f\in\mathcal F\cup \overline{\mathcal F},\qquad
\left(\E_b\left[\|\rd_t^{fb}\|_\FF^p\right]\right)\p\ \leq\ \ps^b_2 p, \ f\in\FF.\]
Here, we combined the upper bound for the moments of the  martingale part for the general case (letting $\eta=1$) in Proposition \ref{prop:mom} 
with the upper bound on the moments of the remainder term for the specific drift part (equation $(\mathbf{II})$ of the Proposition). 
The upper bounds correspond to the case $\alpha=\frac{2}{3}$ in Theorem \ref{theo:1}. The assertion follows together with Lemma \ref{EntropyIntegralLebesgueMetric}.
\end{proof}

\medskip

\begin{proof}[Proof of Proposition \ref{theo:mart}]
The proof substantially relies on Proposition \ref{prop:mom} and Theorem \ref{theo:1}.

\medskip
\noindent
\textbf{\textsl{Martingale approximation.}}
Our first step is the martingale approximation of the non-martingale part of 
\[\H_t(f)\ =\ \sqrt t\left(\frac{1}{t}\int_0^t f(X_s)b(X_s)\d s - \int(fb)d\mu_b + \frac{1}{t}\int_0^t f(X_s)\sigma(X_s)dW_s\right).\]
Proposition \ref{prop:mom} gives the representation $\sqrt t\H_t(f)=\Ma_t^f+\rd_t^f+\int_0^tf(X_s)\d W_s$,
and equation $(\mathbf{II})$ yields, for any $p\geq 1$,
\begin{eqnarray*}
\left(\E_b\left[|\Ma_t^{f}|^p\right]\right)\p&\leq& \Phi_1\sqrt t p\|f\|_{L^2(\lebesgue)},\quad \text{ for } \Phi_1:=\sqrt 2\overline{\Gamma}_{\operatorname{\prox}}\overline\nu \bdg\sqrt{1+\mom+\mathcal S},\\
\left(\E_b\left[\left(\sup_{f\in\FF}|\rd^{fb}_t|\right)^p\right]\right)\p &\leq& \Phi_2p,\hspace*{6em} \text{ for }\Phi_2:=\Gamma_{\operatorname{\prox}}.
\end{eqnarray*}

\medskip
\noindent
\textbf{\textsl{Application of Theorem \ref{theo:1}.}}
Plugging the above estimates of the moments of $\Ma_t^f$ and $\rd_t^f$ into the moment bound of Theorem \ref{theo:1}, one gets together with Lemma \ref{EntropyIntegralLebesgueMetric}, for any $p\geq 1$,
\begin{eqnarray*}
\frac{1}{\sqrt t}\left(\E_b\left[\big\|\Ma_t^{f}+\rd_t^{f}\big\|_\FF^p\right]\right)\p\ &\leq& C_1\sum_{k=0}^{\infty}  E(F_k,\e \Phi_1\ L^2,1) \exp\left(-\frac{k}{2}\right)  + 12\Phi_1 p\V+2\frac{\Phi_2p}{\sqrt t}\\
&&\hspace*{3em}+ \sqrt t C\U(1+2\mom)\exp\left(-\frac{\Lambda t}{2\e\mom}\right)\ \leq\ \tilde{\Phi}_t^b(p),
\end{eqnarray*}
with 
\begin{align*}
\tilde{\Phi}_t^b(p)&:=\ 6C_1v\V\e \Phi_1\left(1+ \log\left(\frac{\AA}{\V}\sqrt{\mathcal S + p\Lambda t}\right)\right) + 12\Phi_1p\V+2\frac{\Phi_2p}{\sqrt t}\\
&\hspace*{10em}+ \sqrt t C\U(1+2\mom)\exp\left(-\frac{\Lambda t}{2\e\mom}\right).
\end{align*}

\medskip
\noindent
\textbf{\textsl{Bounding the $p$-th moments of the original stochastic integral term.}}
Corollary \ref{cor:local_time} yields a constant $\tilde\kappa$ such that, for any $p\geq 1$ and $t\geq 1$,
\begin{eqnarray*}
&&\max\left\{\left(\E_b\left[\sup_{a\in\mathbb Q}\left|\frac{1}{t}L_t^a(X)\right|^{p/2}\right]\right)^{\frac{1}{p}}, \left(\E_b\left[\sup_{a\in\mathbb Q}\left|\frac{1}{t} L_t^a(X)\right|^{p}\right]\right)^{\frac{1}{2p}}\right\}\\
&&\hspace*{20em}\leq \tilde\kappa\left(\sqrt p + t^{-1/4}\sqrt{\log t} + p^{1/4}t^{-1/4}\right).
\end{eqnarray*}
Consequently, as in the proof of Theorem \ref{improved_version}, for any $p\geq 1$,
\begin{eqnarray*}
\left(\E_b\left[\bigg|\frac{1}{\sqrt t}\int_0^tf(X_s)\sigma(X_s)\d W_s\bigg|^p\right]\right)\p\ \leq\
\overline\nu\bdg\sqrt p\|f\|_{L^2(\lebesgue)}  \tilde\kappa\left(\sqrt p + t^{-1/4}\sqrt{\log t} + p^{1/4}t^{-1/4}\right).
\end{eqnarray*}
In particular, this last estimate implies that the $p$-th moments are uniformly bounded over $\FF$ and that the process $\left(t^{-1/2}\int_0^tf(X_s)\sigma(X_s)\d W_s\right)_{f\in\FF}$ exhibits a subexponential tail behaviour. 
Precisely, we have 
\begin{equation*}
\P_b\left(\Big|t^{-1/2}\int_0^t(f-g)(X_s)\sigma(X_s)\d W_s\Big|\ \geq\ d(f,g) u \right)\ \leq\ \exp(-u),
\end{equation*}
for $d(f,g):=\max\left\{\e\Lambda_3,1\right\}\|f-g\|_{L^2(\lebesgue)}$, $\Lambda_3:=\max\{4\tilde\kappa\overline\nu\bdg + 2\tilde\kappa\overline\nu\bdg\sup_{t\geq 1 }t^{-1/4}\sqrt{\log t},1\}$.
Following the scheme of the proof of Theorem \ref{theo:1}, we apply the chaining procedure from Proposition \ref{thm:dirk} locally and obtain, for all $p,q\geq 1$, $k\in\N_0$,
\begin{eqnarray*}
\frac{1}{\sqrt t}\left\|\sup_{f\in\FF_k}\int_0^t f(X_s)\sigma(X_s)\d W_s\right\|_{q}&\leq&
 C_1\int_0^\infty \log N(u,\FF_k,d)\d u\\
&& \hspace*{0em} +\,  2\sup_{f\in\mathcal F_k}\frac{1}{\sqrt t}\bigg\|\int_0^t f(X_s)\sigma(X_s)\d W_s\bigg\|_q\\
&\leq& C_1\int_0^\infty \log N(u,\FF_k,d)\d u + \Lambda_3\V q.
\end{eqnarray*}
From the local result, we deduce, for any $p\geq 1$,
\begin{align*}
&\frac{1}{\sqrt t}\left(\E_b\left[\bigg\|\int_0^tf(X_s)\sigma(X_s)\d W_s\bigg\|_{\FF}^p\right]\right)^{\frac{1}{p}}\\
&\hspace*{2em}\leq\ \frac{1}{\sqrt t}\left(\E_b\left[\bigg\|\int_0^tf(X_s)\sigma(X_s)\d W_s\bigg\|_{\FF_0}^p\right]\right)\p\\
&\hspace*{4em} + \,\sum_{k=1}^\infty \frac{1}{\sqrt t}\left(\E_b\left[\bigg\|\int_0^tf(X_s)\sigma(X_s)\d W_s\bigg\|_{\FF_k}^{2p}\right]\right)^{\frac{1}{2p}}\left(\P_b(A_k)\right)^{\frac{1}{2p}}\\
&\hspace*{2em}\leq\ C_1\sum_{k=0}^\infty E(\mathcal F_k, d,1)\exp\left(-\frac{k}{2}\right) + 6\Lambda_3\V p.
\end{align*}
The entropy integrals can be bounded independently of $k\in\N_0$ by means of Lemma \ref{EntropyIntegralLebesgueMetric}, allowing us to conclude finally that, for any $p\geq 1$,
\begin{eqnarray*}
\left(\E_b\left[\|\H_t\|_\FF^p\right]\right)\p
&\leq& \tilde \Phi_t^b(p) + \tilde \Psi_t^b(p),
\end{eqnarray*}
with
\[\tilde \Psi_t^b(p) \ :=\ 6C_1 v\V\max\left\{\e\Lambda_3,1\right\}\left(1+ \log\left(\frac{\AA}{\V}\sqrt{\mathcal S + p\Lambda t}\right)\right) + 6\Lambda_3\V p.
\]
\end{proof}

\medskip

\section{Proofs for Section \ref{sec:tp}}
\begin{proof}[Proof of Theorem \ref{theo:cath}]
Note first that, for each $x\in\R$, $\rho_{t,K}(h)(x)-\rho_t^\circ(x)$ is a random variable which is right-continuous in $x$.
Thus, $\|\rho_{t,K}(h)-\rho_t^\circ\|_\infty\ =\ \sup_{x\in\mathbb{Q}}|\rho_{t,K}(h)(x)-\rho_t^\circ(x)|$ is also measurable as a supremum over a countable set.
Introduce
\[\Psi_1(x,y)\ :=\ (x-y)\cdot\mathds{1}_{(-\infty,x]}(y),\quad \Psi_2(x,X^t)\ :=\ \int_0^t\mathds{1}_{(-\infty,x]}(X_u)\d X_u,\quad x,y\in\R,\ t>0,\]
and abbreviate $K_h(\cdot):=h^{-1}K(\cdot/h)$.
Using the occupation times formula and Tanaka's formula for diffusion local time, we obtain
\begin{eqnarray*}
\rho_t^\circ(x)&=& t^{-1}\L_t^x(\X) \ = \ 2t^{-1}\left((X_t-x)^--(X_0-x)^-+\int_0^t\mathds{1}_{(-\infty,x]}(X_s)\d X_s\right)\\
&=& 2t^{-1}\left(\Psi_1(x,X_t)-\Psi_1(x,X_0)+\Psi_2(x,X^t)\right),
\end{eqnarray*}
and, since $\int K_h(x-y)\d y =1$,
\begin{eqnarray*}
\rho_{t,K}(h)(x)-\rho_t^\circ(x)&=& t^{-1}\int_\R K_h(x-y)\L_t^y(\X)\d y-  t^{-1}\L_t^x(\X)\\
&=& 2t^{-1}\int_\R K_h(x-y)\left(\Psi_1(y,X_t) -\Psi_1(x,X_t)\right) \d y\\
&&\hspace*{5.5em} + 2t^{-1}\int_\R K_h(x-y)\left(\Psi_1(x,X_0) -\Psi_1(y,X_0)\right) \d y \\
&&\hspace*{5.5em} + 2t^{-1}\int_\R K_h(x-y)\left(\Psi_2(y,X^t)-\Psi_2(x,X^t)\right)\d y\\
&=:& A_{1,x}(t,h)+A_{2,x}(t,h)+B_x(t,h).
\end{eqnarray*}
We start by rewriting
\begin{eqnarray*}
tA_{1,x}(t,h)&=& 2\int_\R K_h(x-y)\left\{(y-X_t)\mathds{1}_{(-\infty,y]}(X_t)-(x-X_t)\mathds{1}_{(-\infty,x]}(X_t)\right\} \d y\\
&=& 2\int_\R K(z)\left\{(x-X_t)\mathds{1}_{(-\infty,x-zh]}(X_t)-(x-X_t)\mathds{1}_{(-\infty,x]}(X_t)\right\}\d z\\
&&\hspace*{15em}-2\int_\R K(z)zh\mathds{1}_{(-\infty,x-zh]}(X_t)\d z.
\end{eqnarray*}
Note that $|(x-X_t)\left(\mathds{1}_{(-\infty,x-zh]}(X_t)-\mathds{1}_{(-\infty,x]}(X_t)\right)|\leq |z|h$ for $h\geq0$.
Thus, for any $x\in\mathbb{Q}$, $A_{1,x}(t,h)\leq 4ht^{-1}\int_\R|K(z)z|\d z$.
Since $A_{2,x}(t,h)$ can be treated analogously, it follows 
\begin{equation}\label{eq:A}
\sup_{x\in\mathbb{Q}}|A_{1,x}(t,h)+A_{2,x}(t,h)|\ \leq \ 8ht^{-1}\int_\R |K(z)z|\d z.
\end{equation} 
It remains to consider $B_x(t,h)$. For any fixed $x\in\mathbb{Q}$, we have 
\begin{align}\nonumber
tB_x(t,h)
&=\ 2\int_\R K_h(x-y)\int_0^t\left\{\mathds{1}_{(-\infty,y]}(X_s)-\mathds{1}_{(-\infty,x]}(X_s)\right\}\d X_s\d y\\\label{eq:fub}
&=\ 2\int_\R K(z)\int_0^t\left\{\mathds{1}_{(-\infty,x-zh]}(X_s)-\mathds{1}_{(-\infty,x]}(X_s)\right\}\d X_s\d z\\\nonumber
&=\ 2\int_0^t\left\{\int_\R K(z)\mathds{1}_{(-\infty,x-zh]}(X_s)\d z-\mathds{1}_{(-\infty,x]}(X_s)\right\}\d X_s\\\nonumber
&=\ 2\int_0^t\left\{\int_\R K_h(y-X_s)\mathds{1}_{(-\infty,x-y+X_s]}(X_s)\d y-\mathds{1}_{(-\infty,x]}(X_s)\right\}\d X_s\\\label{eq:bx}
&=\ 2\int_0^t \left\{K_h\ast \mathds{1}_{(-\infty,x]}-\mathds{1}_{(-\infty,x]}\right\}(X_s)\d X_s.
\end{align}
Here we used a Fubini-type theorem for stochastic integrals (cf.~\cite{kaietal78}), allowing us to change the order of integration in \eqref{eq:fub}.
We proceed by applying Proposition \ref{theo:mart} to the function class
\begin{equation}\label{def:fkh}
\FF_{K,h}\ :=\ \left\{K_h\ast \mathds{1}_{(-\infty,x]}-\mathds{1}_{(-\infty,x]}\colon\ x\in\mathbb{Q}\right\}.
\end{equation}
Following the lines of the proof of Theorem 1 in \cite{gini09}, note first that, for any $x\in\R$, $h>0$,
\[
K_h\ast\mathds{1}_{(-\infty,x]}(\cdot)-\mathds{1}_{(-\infty,x]}(\cdot)
\ =\ H\left(\frac{x-\cdot}{h}\right),\]
for $H(u):=\int_{-\infty}^uK(z)\d z-\mathds{1}_{[0,\infty)}(u)$, $u\in\R$. Since $H$ is of bounded variation, Lemma \ref{lem:entgi} ensures that the entropy condition from Assumption \ref{ann:3} holds true.
Moreover,
\[\sup_{x\in\mathbb{Q}}\left\|K_h\ast\mathds{1}_{(-\infty,x]}(\cdot)-\mathds{1}_{(-\infty,x]}(\cdot)\right\|_\infty\ \leq\ \int_\R |K(z)|\d z+1\ \leq\ 2\|K\|_{L^1(\lebesgue)}\ =:\ \mathbb{K},\]
i.e., $\FF_{K,h}$ is uniformly bounded.
Let us now investigate the $L^2(\lebesgue)$-norm of $\FF_{K,h}$. To this end, fix $x\in\mathbb Q$ and note that, for any $z>0$,
\begin{eqnarray*}
\int \left(\mathds{1}_{(-\infty,x]}(y+z)-\mathds{1}_{(-\infty,x]}(y)\right)^2\d y
&=&\int \left|\mathds{1}_{(-\infty,x]}(y)-\mathds{1}_{(-\infty,x]}(y+z)\right|\d y\\
&=& \int \mathds{1}_{(x-z,x]}(y) \d y\ =\ z.
\end{eqnarray*}
A similar argument for $z\leq 0$ yields
\begin{eqnarray*}
\int \left(\mathds{1}_{(-\infty,x]}(y+z)-\mathds{1}_{(-\infty,x]}(y)\right)^2\d y\ =\ |z|
\end{eqnarray*} for all $z\in\R$.
This bound implies that 
\[\sup_{x\in\mathbb Q} \|K_h\ast\mathds{1}_{(-\infty,x]} - \mathds{1}_{(-\infty,x]} \|_{L^2(\lebesgue)}\ \leq\ \sqrt h \int |K(z)\sqrt z|\d z\] 
since, for any $x\in\mathbb Q$, using Minkowski's integral inequality,
\begin{eqnarray*}
&&\left(\int \left(K_h\ast \mathds{1}_{(-\infty,x]}(y) - \mathds{1}_{(-\infty,x]}(y)\right)^2\d y \right)^{\frac{1}{2}}\\
&&\hspace*{3em}= \left(\int \left(\int K_h(z) \mathds{1}_{(-\infty,x]}(y+z) - \mathds{1}_{(-\infty,x]}(y)\d z\right)^2\d y\right)^{\frac{1}{2}}\\
&&\hspace*{3em}\leq \int|K_h(z)| \left(\int\left( \mathds{1}_{(-\infty,x]}(y+z) - \mathds{1}_{(-\infty,x]}(y)\right)^2\d y\right)^{\frac{1}{2}}\d z\\
&&\hspace*{3em}=  \int|K_h(z)|\sqrt{|z|}\d z\ =\ \sqrt h \int |K(z)|\sqrt{|z|}\d z.
\end{eqnarray*}
Clearly, $\supp(K_h\ast\mathds{1}_{(-\infty,x]} - \mathds{1}_{(-\infty,x]})\subset [x-h/2,x+h/2]$ such that 
\begin{eqnarray*}
\sup_{x\in\mathbb Q} \|K_h\ast\mathds{1}_{(-\infty,x]} - \mathds{1}_{(-\infty,x]}\|_{L^2(\lebesgue)}&\leq&\sqrt{\mathcal S},\\ 
\sup_{x\in\mathbb Q}\lambda(\supp(K_h\ast\mathds{1}_{(-\infty,x]} - \mathds{1}_{(-\infty,x]}))&\leq& \mathcal S:=h\ \max\left\{1, \ \left(\int |K(z)|\sqrt{|z|}\d z\right)^2\right\}.
\end{eqnarray*}
We have thus shown that $\mathcal F_{K,h}$ satisfies Assumption \ref{ann:1}.

However, since the functions $\mathds{1}_{(-\infty,x]}$ are not continuous, Proposition \ref{theo:mart} cannot be applied.
Inspection of the proof shows that continuity is required in order to use Proposition \ref{prop:mom}. 
More precisely, continuity allows to apply It\^o's formula which in turn yields the central representation $\G_t=t^{-1/2}(\Ma_t + \rd_t).$ 
Consequently, Proposition \ref{theo:mart} is applicable once we can show that the same representation is valid for the functions $\mathds{1}_{(-\infty,x]}b$.
For deriving this representation, we need to approximate
\[\int_0^t\mathds{1}\{X_s\leq x\}b(X_s)\d s - \E_b\left[\int_0^t\mathds{1}\{X_s\leq x\}\d X_s\right]
\ =\ \int_0^t\mathds{1}\{X_s\leq x\}b(X_s)\d s - \frac{t}{2}\rho_b(x).\]
Denote $f_x(\cdot):=\mathds{1}\{\cdot\leq x\}\ b(\cdot)$, $x\in\mathbb Q$. We proceed similarly to the proof of Proposition \ref{prop:mom} by setting
\begin{eqnarray*}
\h^{f_x}(u)&:=&\frac{2}{\rho_b(u)}\int f_x(y)\rho_b(y)(\mathds{1}\{u>y\}-F_b(u))\d y\\
&\ =& \mathds{1}\{u> x\} \frac{1}{\rho_b(u)}\rho_b(x)(1-F_b(u)) + \mathds{1}\{u\leq x\}\left(1-\frac{F_b(u)\rho_b(x)}{\rho_b(u)}\right)\\
&\ =& \frac{1}{\rho_b(u)}\rho_b(x)(\mathds{1}\{u> x\} -F_b(u)) + \mathds{1}\{u\leq x\},\\
\h_n(u)&:=&\frac{\rho_b(x)}{\rho_b(u)}(\phi_n(u) -F_b(u)) + (1-\phi_n(u)),
\end{eqnarray*}
for $\phi_n(u)$ denoting a smooth approximation of $\mathds{1}\{u> x\}$, given as
\[\phi_n(u)\ :=\ \frac{n}{\sqrt{2\pi}} \int_{-\infty}^u \exp\left(\frac{-(v-x)^2n^2}{2}\right)\d v\] (cf.~the proof of Proposition 1.11 in \cite{kut04}).
Note that $\lim_{n\to\infty}\phi_n(u)=\mathds{1}\{u>x\}$ and, for any continuous function $g\colon\R\to\R$, it holds 
\begin{equation}\label{mollifier}
\lim_{n\to\infty} \int \phi'_n(u)g(u)\d u\ =\ g(x)
\end{equation}
Set
\[H_n(y)\ :=\ \int_0^y\h_n(u)\d u\quad\text{ and }\quad H(y)\ :=\ \int_0^y \h^{f_x}(u)\d u.\]
Then $H'_n(y)\ =\ \h_n(y)$, and
\begin{eqnarray*}
H''_n(y)&=&-\frac{\rho_b(x)\rho'_b(y)}{\rho_b^2(y)}(\phi_n(y) -F(y)) +\frac{\rho_b(x)}{\rho_b(y)}(\phi'_n(y)-\rho_b(y)) -\phi'_n(y)\\
&=&-\frac{2\rho_b(x) b(y)}{\rho_b(y)}(\phi_n(y) -F(y)) +\frac{\rho_b(x)}{\rho_b(y)}(\phi'_n(y)-\rho_b(y)) -\phi'_n(y)\\
&=&-2\h_n(y)b(y) + 2b(y)(1-\phi_n(y)) + \frac{\rho_b(x)}{\rho_b(y)}(\phi'_n(y)-\rho_b(y)) -\phi'_n(y).
\end{eqnarray*}
It\^o's formula yields
\begin{align*}
&H_n(X_t)-H_n(X_0)\\
&\hspace*{0.2em} =  \int H_n'(X_s)\d X_s + \frac{1}{2}\int H_n''(X_s)\d s\\
&\hspace*{0.2em} =  \int_0^t\h_n(X_s)b(X_s)\d s + \int_0^t\h_n(X_s)\d W_s-\int_0^t\h_n(X_s)b(X_s)\d s\\
&\hspace*{5em}+ \int_0^t\left\{b(X_s)(1-\phi_n(X_s)) + \frac{\rho_b(x)}{2\rho_b(X_s)}(\phi'_n(X_s)-\rho_b(X_s)) -\frac{\phi'_n(X_s)}{2}\right\}\d s\\
&\hspace*{0.2em} =  \int_0^t\h_n(X_s)\d W_s + \int_0^t\bigg\{b(X_s)(1-\phi_n(X_s)) + \frac{\rho_b(x)}{2\rho_b(X_s)}(\phi'_n(X_s)-\rho_b(X_s)) -\frac{\phi'_n(X_s)}{2}\bigg\}\d s.
\end{align*}
Continuity of diffusion local time $(\L_t^a)_{a\in\R}$ and \eqref{mollifier} imply that
\begin{eqnarray*}
\int_0^t\left\{\frac{\rho_b(x)}{2\rho_b(X_s)}\phi'_n(X_s) -\frac{1}{2}\phi'_n(X_s)\right\}\d s 
&=&\int \left( \frac{\rho_b(x)}{2\rho_b(y)}\phi'_n(y) -\frac{1}{2}\phi'_n(y)\right)\L_t^y(\X)\d y \\
&\rightarrow_{n\to\infty}& \left(\frac{\rho_b(x)}{2\rho_b(x)}-\frac{1}{2}\right)\L_t^x(\X) \ =\ 0.
\end{eqnarray*}
Using the at-most-linear-growth condition on $b$, it can be shown that, for fixed $x\in\mathbb Q$, there exist constants $\theta_1$,$\theta_2>0$ such that, 
for all $n\in\N$,
\[\theta_2 F_b(u)\ \geq\ \phi_n(u), \quad \forall\,u\leq -\theta_1.\]
Intuitively speaking, this relation reflects the fact that $\rho_b$ has tails at least as heavy as a normal distribution.
This implies, for all $n\in\N$,
\[\|\h_n\|_\infty\ \leq\ \frac{2\rho_b(x)}{\inf_{|u|\leq\theta_1}\rho_b(u)}  + \sup_{u\geq 0}\frac{1-F_b(u)}{\rho_b(u)}\rho_b(x) + (\theta_2+1)\sup_{u\leq 0}\frac{F_b(u)}{\rho_b(u)} + 3.\]
Thus, taking account of $\lim_{n\to\infty}\phi_n(u)=\mathds{1}\{u>x\}$ and $\lim_{n\to\infty}\h_n(u)=\h^{f_x}(u)$, we obtain from the dominated convergence theorem and its version for stochastic integrals (see, e.g., Proposition 5.8 in \cite{legall16}) almost surely
\[\int_{X_0}^{X_t}\h^{f_x}(u)\d u\ =\ \int_0^t\h^{f_x}(X_s) \d W_s + \int_0^t b(X_s)\mathds{1}\{X_s\leq x\}\d s  - \frac{t}{2}\rho_b(x).\]
Thus, the martingale approximation from Proposition \ref{prop:mom} and, consequently, Proposition \ref{theo:mart} is valid for the class $\FF_{K,h}$ introduced in \eqref{def:fkh}. 
In particular, there exist positive constants $\co$ and $\Lambda$ such that
\[\sup_{b\in\Sigma} \P_b\left(\left\| \sqrt t\left(\frac{1}{t}\int_0^t f(X_s)\d X_s - \E_b\left[f(X_0)b(X_0)\right]\right)\right\|_{\mathcal{F}_{K,h}}\geq\phi(u) \right)\ \leq\ \e^{-u}\quad\forall\,u\geq 1,\]
where 
\[
\phi(u)\ =\ \sqrt h \VV\e\mathbb L\left\{1+ \log\left(\frac{1}{\sqrt h \VV}\right)+\log\left(t\right) +  u\right\}+ \e\co\frac{u}{\sqrt t}+\e\co\sqrt t \exp\left(-\frac{\Lambda t}{2\e\mom}\right),
\]
with $\VV:=\int |K(z)|\sqrt{|z|}\d z$.
Furthermore, for any $x\in\mathbb Q$,
\begin{eqnarray*}
&&\left|\E_b\left[\left(K_h\ast \mathds{1}_{(-\infty,x]}-\mathds{1}_{(-\infty,x]}\right)(X_0)b(X_0)\right]\right|\\
&&\hspace*{3em}=\ \frac{1}{2}\left|\int \int K_h(z)\left(\mathds{1}_{(-\infty,x]}(z+y) - \mathds{1}_{(-\infty,x]}(y)\right)\rho_b'(y)\d z\d y\right|\\
&&\hspace*{3em}=\ \frac{1}{2}\left|\int \int  K(v)\left(\mathds{1}_{(-\infty,x-vh]}(y) - \mathds{1}_{(-\infty,x]}(y)\right)\rho_b'(y)\d v\d y\right|\\
&&\hspace*{3em}=\ \frac{1}{2}\left|-\int_0^{\infty}K(v) \int_{x-vh}^x\rho_b'(y)\d y \d v + \int_{-\infty}^0 K(v)\int_x^{x-vh}\rho_b'(y)\d y\d v\right|\\
&&\hspace*{3em}=\ \frac{1}{2}\left|\int K(v) (\rho_b(x-vh)-\rho_b(x)) \d v\right|.
\end{eqnarray*}
In case $\beta>1$, we proceed with
\begin{eqnarray*}
&&\frac{1}{2}\left|\int K(v) (\rho_b(x-vh)-\rho_b(x)) \d v\right|\\
&&\hspace*{3em}=\, \frac{1}{2}\left|\int K(v) \sum_{i=1}^{\lfloor\beta\rfloor-1}\frac{\rho_b^{(i)}(x)}{i!}(vh)^{i} + \frac{\rho_b^{\lfloor \beta  \rfloor}(x-\tau_vvh)}{\lfloor \beta  \rfloor!}(vh)^{\lfloor \beta  \rfloor} \d v\right|\\
&&\hspace*{3em}=\, \frac{1}{2}\left|\int K(v)\frac{\rho_b^{\lfloor \beta  \rfloor}(x-\tau_vvh)-\rho_b^{\lfloor \beta  \rfloor}(x)}{\lfloor \beta  \rfloor!}(vh)^{\lfloor \beta  \rfloor} \d v \right|\\
&&\hspace*{3em}\leq\, \frac{\mathcal L}{2\lfloor \beta\rfloor!}\int\left|K(v)\right|\left| (\tau_v vh)^{\beta - \lfloor \beta\rfloor}(vh)^{\lfloor \beta  \rfloor}\right| \d v,
\end{eqnarray*}
where $\tau_v\in [0,1]$, $v\in\R$.
For $\beta\leq 1$, $\rho_b$ is H\"older continuous to the exponent $\beta$ which implies
\begin{eqnarray*}
\frac{1}{2}\left|\int K(v) (\rho_b(x-vh)-\rho_b(x)) \d v\right|&\leq& \frac{\mathcal L}{2}\int\left|K(v)\right|\left|vh\right|^{\beta} \d v.
\end{eqnarray*}
Thus, for $b\in\Sigma(\beta,\ML)$ with $\beta>0$,
\[\left|\E_b\left[\left(K_h\ast \mathds{1}_{(-\infty,x]}-\mathds{1}_{(-\infty,x]}\right)(X_0)\ b(X_0)\right]\right| \ \leq\ h^{\beta}\frac{\mathcal L}{2\lfloor \beta\rfloor!}\int |K(v)v^{\beta }|\d v.\]
In view of \eqref{eq:bx} and the above considerations, for any $u\geq 1$,
\begin{eqnarray}\nonumber
&&\sup_{b\in\Sigma}\P_b\left(\sqrt t\sup_{x\in \mathbb{Q}}|B_x(t,h)|\ \geq\ 2\left(\phi(u)+\sqrt t h^{\beta}\frac{\mathcal L}{2\lfloor \beta\rfloor!}\int |K(v)v^{\beta }|\d v\right)\right)\\\nonumber
&&\hspace*{2em}=\ \sup_{b\in\Sigma} \P_b\left(\sup_{f\in\FF_{K,h}}\Big|\frac{1}{\sqrt t}\int_0^t f(X_s)\d X_s\Big|\ \geq\ \phi(u)+\sqrt t h^{\beta}\frac{\mathcal L}{2\lfloor \beta\rfloor!}\int |K(v)v^{\beta }|\d v\right)\\\nonumber
&&\hspace*{2em}
\leq\ \sup_{b\in\Sigma}\P_b\Bigg(\sup_{f\in\FF_{K,h}}\Big|\frac{1}{\sqrt t}\int_0^t f(X_s)\d X_s-\sqrt t\E_b[f(X_0)b(X_0)]\Big|\\\nonumber
&&\hspace*{8em} + \sqrt t h^{\beta}\frac{\mathcal L}{2\lfloor \beta\rfloor!}\int |K(v)v^{\beta }|\d v\ \geq\ \phi(u)+\sqrt t h^{\beta}\frac{\mathcal L}{2\lfloor \beta\rfloor!}\int |K(v)v^{\beta }|\d v\Bigg)\\\label{eq:B}
&&\hspace*{2em}\leq \ \e^{-u}.
\end{eqnarray}
Set
\[\lambda_0:=\sqrt h \VV\e\mathbb L\left\{1+ \log\left(\frac{1}{\sqrt h\VV}\right)+\log\left(t\right)\right\}+\e\co\sqrt t \e^{-\frac{\Lambda t}{2\e\mom}}+\sqrt t h^{\beta}\frac{\mathcal L}{2\lfloor \beta\rfloor!}\int |K(v)v^{\beta }|\d v.
\]
Define $\Lambda_1:=(8\VV\e\co+ 8\e\co)^{-1}$, and choose $\Lambda_0\geq 1$ such that, for all $t\geq 1$, $h\in(0,1)$,
\[
 8h t^{-1/2}\int_\R |K(z)z|\d z <  4\Lambda_0\lambda_0
\]
and $\VV\e\co\Lambda_1\Lambda_0>1$.
Taking into account \eqref{eq:A}, this choice in particular implies that, for any $\lambda\geq 8\Lambda_0\lambda_0$, 
\begin{align}\label{eq:C}\begin{split}
&\sup_{b\in\Sigma}\,\,\P_b\left(\sqrt t\|\rho_{t,K}(h)-\rho_t^\circ\|_\infty>\lambda\right)\\
&\qquad\leq\ \sup_{b\in\Sigma}\,\,\P_b\left(\sqrt t\sup_{x\in\mathbb{Q}}|A_{1,x}(t,h)+A_{2,x}(t,h)+B_x(t,h)|>\lambda\right)\\
&\qquad\leq\ \sup_{b\in\Sigma}\,\, \P_b\left(\sqrt t\|B_\bullet(t,h)\|_\infty>\lambda-\frac{8h}{\sqrt t}\int|K(z)z|\d z\right)\\
&\qquad\leq\ \sup_{b\in\Sigma}\,\, \P_b\left(\sqrt t\|B_\bullet(t,h)\|_\infty>\lambda-4\Lambda_0\lambda_0\right)\ 
\leq\ \sup_{b\in\Sigma}\,\, \P_b\left(\sqrt t\|B_\bullet(t,h)\|_\infty>\lambda/2\right).
\end{split}
\end{align}
 Note that, for $u=\Lambda_1\lambda h^{-1/2}$,
\begin{eqnarray*}
\phi(u) + \sqrt t h^{\beta}\frac{\mathcal L}{2\lfloor \beta\rfloor!}\int |K(v)v^{\beta }|\d v&\leq& \lambda_0 + \sqrt h\VV\e\co u + \frac{\e\co u}{\sqrt t}\\
&\leq& \lambda_0\Lambda_0 + \frac{u}{8}\Lambda_1^{-1}\sqrt h\ \leq\ \lambda_0\Lambda_0+\frac{\lambda}{8}\ \leq\ \frac{\lambda}{4}.
\end{eqnarray*}
Summarising, \eqref{eq:B} and \eqref{eq:C} then give the asserted inequality \eqref{eq:theo:1}.
\end{proof}

\medskip

We are now in a position to derive the announced upper bounds on the moments of centered diffusion local time.

\begin{proof}[Proof of Corollary \ref{cor:centlt}]
We point out that the assumption that $b\in\Sigma(\C,A,\gamma,1)$ already imposes some regularity on the invariant density in the sense of Definition \ref{def:Hölder}. 
More precisely, if $b\in\Sigma(\C,A,\gamma,1)$, the invariant density $\rho_b$ is bounded and Lipschitz continuous due to \eqref{eq:reg_rho_b} which in turn means that $b\in \Sigma(1,\ML)$.
Decompose
\begin{equation}\label{ineq:decomp}
\left(\E_b\left[\bigg\|\frac{\L_t^\bullet(\X)}{t}-\rho_b\bigg\|_\infty^p\right]\right)\p\ \leq\ \left(\E_b\left[\left\|\rho_t^\circ-\rho_{t,K}\left(t^{-1}\right)\right\|_\infty^p\right]\right)\p+\left(\E_b\left[\|\rho_{t,K}\left(t^{-1}\right)-\rho_b\|_\infty^p\right]\right)\p.
\end{equation}
Inspection of the proof of Theorem \ref{theo:cath} shows that, for any $b\in\Sigma(1,\mathcal L)$ and $h=h_t\geq t^{-1}$,
\begin{eqnarray*}
\left(\E_b\left[\|\rho_{t,K}(h)-\rho_t^\circ\|_\infty^p\right]\right)\p
&\leq& \left(\E_b\left[\|A_{1,x}(t,h) + A_{2,x}(t,h) + B_x(t,h)\|_\infty^p\right]\right)\p\\
&\leq&  \frac{8h}{t}\int|K(z)z|\d z + \e^{-1}\varphi(p) + \frac{h\mathcal L}{2}\int |K(v)v|\d v, 
\end{eqnarray*}
where 
\[
\varphi(u)\ =\ \VV\e\mathbb L\sqrt{\frac{h}{t}} \left\{1+ \log\left(\frac{1}{\sqrt h \VV}\right)+\log\left(t\right) +  u\right\}+ \e\co\frac{u}{t}+\e\co \exp\left(-\frac{\Lambda t}{2\e\mom}\right).\]
The second term on the rhs of \eqref{ineq:decomp} is bounded by means of Corollary \ref{prop:csi}.
Consequently, specifying $h=h_t\sim t^{-1}$, we obtain a constant $\zeta$ such that
\begin{eqnarray*}
\left(\E_b\left[\bigg\|\frac{\L_t^\bullet(\X)}{t}-\rho_b\bigg\|_\infty^p\right]\right)\p
&\leq& \zeta\left(\frac{1}{\sqrt t}\left\{1+\sqrt{\log(pt)} + \sqrt p\right\} + \frac{p}{t}+t\exp\left(-\frac{\Lambda t}{2\e\mom}\right)\right).
\end{eqnarray*}
\end{proof}

\medskip

\begin{proof}[Proof of Theorem \ref{improved_version}]
Analogously to the proof of Proposition \ref{theo:mart}, we start with decomposing $\H_t$ into finite variation and martingale part,
\begin{eqnarray*}
\H_t(f)&=& \sqrt t\left(\frac{1}{t}\int_0^t f(X_s)b(X_s)\d s - \int(fb)\d\mu_b + \frac{1}{t}\int_0^t f(X_s)\d W_s\right)\\
&=& \G^b_t(f) + \frac{1}{\sqrt t}\int_0^t f(X_s)\d W_s.
\end{eqnarray*}
For the finite variation part, part $(\mathbf{II})$ of Theorem \ref{theo:bernsup} gives, for any $p\geq 1$,
\begin{eqnarray*}
\left(\E\left[\|\G^{b}_t\|_{\mathcal{F}}^p\right]\right)^{\p}
\ \leq\ \Ph_t(p),
\end{eqnarray*}
for $\Ph_t$ defined as in \eqref{def:Ph}.
It remains to bound the $p$-th moments of the original stochastic integral term. Given $f\in\FF\cup \overline\FF$ and any $p\geq 2$, it holds
\begin{eqnarray*}
\left(\E_b\left[\left|\frac{1}{\sqrt t}\int_0^tf(X_s)\d W_s\right|^p\right]\right)^{\frac{1}{p}}
&\leq& \bdg \sqrt p\left(\E_b\left[\left(\frac{1}{t}\int_0^tf^2(X_s)\d s\right)^{\frac{p}{2}}\right]\right)^{\frac{1}{p}}\\
&\leq& \bdg \sqrt p\|f\|_{L^2(\lebesgue)}\left(\E_b\left[\left(\frac{1}{t}\|\L_t^\bullet(\X)\|_\infty\right)^{\frac{p}{2}}\right]\right)^{\frac{1}{p}}.
\end{eqnarray*}
In the same way, we obtain for $1\leq p<2$, 
\begin{eqnarray*}
\left(\E_b\left[\left|\frac{1}{\sqrt t}\int_0^tf(X_s)\d W_s\right|^p\right]\right)^{\frac{1}{p}}
&\leq& \frac{1}{\sqrt t}\ \left(\E_b\left[\left(\int_0^tf(X_s)\d W_s\right)^{2p}\right]\right)^{\frac{1}{2p}}\\
&\leq &\overline\bdg \sqrt{\frac{2p}{t}}\left(\E_b\left[\left(\int_0^tf^2(X_s)\d s\right)^p\right]\right)^{\frac{1}{2p}}\\ 
&=& \bdg\sqrt{\frac{p}{t}}\left(\E_b\left[\left(\int_\R f^2(y)\L_t^y(\X)\d y\right)^p\right]\right)^{\frac{1}{2p}}\\
&\leq&\bdg \sqrt p\|f\|_{L^2(\lebesgue)}\left(\E_b\left[\left(\frac{1}{t}\|\L_t^\bullet(\X)\|_\infty\right)^{p}\right]\right)^{\frac{1}{2p}}.
\end{eqnarray*}
It follows from Corollary \ref{cor:centlt} that there exists positive constants $\bar{\co}_1$, $\tilde{\co}_1$ such that, for any $p\geq 1$ and $t\geq 1$,
\begin{eqnarray*}
&&\max\left\{\left(\E_b\left[\left(\frac{1}{t}\sup_{a\in\mathbb Q}\left|L_t^a(X)\right|\right)^{\frac{p}{2}}\right]\right)^{\frac{1}{p}},\ 
\left(\E_b\left[\left(\frac{1}{t}\sup_{a\in\mathbb Q}\left|L_t^a(X)\right|\right)^{p}\right]\right)^{\frac{1}{2p}}\right\}\\
&&\hspace*{7em} \leq\  \bar{\co}_1\Bigg(1+t\exp\left(-\frac{\Lambda t}{2\e\mom}\right) + \frac{1}{\sqrt t}\left\{1+\sqrt{\log t}+ \sqrt p\right\} + \frac{p}{ t}  \Bigg)^{1/2}\\
&&\hspace*{7em} \leq\ \tilde{\co}_1\Bigg(1+\left(\frac{p}{t}\right)^{1/4} + \sqrt{\frac{p}{t}}\Bigg).
\end{eqnarray*}
Consequently, for any $p\geq 1$,
\begin{eqnarray*}
2\sup_{f\in\FF}\left(\E_b\left[\Big|\frac{1}{\sqrt t}\int_0^t f(X_s)\d W_s\Big|^p\right]\right)\p&\leq&\Lambda_3\V \left(\sqrt p+\frac{p}{t^{1/4}}\right),\\
\left(\E_b\left[\Big|\frac{1}{\sqrt t}\int_0^t (f-g)(X_s)\d W_s\Big|^p\right]\right)\p&\leq&\Lambda_3\|f-g\|_{L^2(\lebesgue)}\left(\sqrt p+\frac{p}{t^{1/4}}\right),\quad f,g\in \FF,
\end{eqnarray*}
with $\Lambda_3:=\max\{4\tilde{\co}_1\bdg,1\}$.
In view of Lemma \ref{moment_tail_lemma}, this last estimate implies that, for any $u\geq 1$, $f,g\in\mathcal F$,
\[
\P_b\left(\Big|t^{-1/2}\int_0^t(f-g)(X_s)\d W_s\Big|\ \geq\ d(f,g) \left(\sqrt u+t^{-1/4}u\right) \right)\ \leq\ \exp(-u),
\]
for $d(f,g):=\e\Lambda_3\ \|f-g\|_{L^2(\lebesgue)}$.
Analogously to the proof of Proposition \ref{theo:mart}, we obtain for all $p,q\geq 1$, $k\in\N_0$,
\begin{align*}
\frac{1}{\sqrt t}\left\|\sup_{f\in\FF_k}\int_0^t f(X_s)\d W_s\right\|_{q}&\leq\
 \frac{\tilde C_1}{t^{1/4}}\int_0^\infty \log N(u,\FF_k,d)\d u + \tilde C_2\int_0^\infty\sqrt{\log N(u,\FF_k,d)}\d u\\
&\hspace*{14em} +  2\sup_{f\in\FF}\frac{1}{\sqrt t}\left\|\int_0^t f(X_s)\d W_s\right\|_q,
\end{align*}
and from the local result, we infer, for any $p\geq1$,
\begin{align*}
\frac{1}{\sqrt t}\left(\E_b\left[\bigg\|\int_0^tf(X_s)\d W_s\bigg\|_{\FF}^p\right]\right)^{\frac{1}{p}}&\leq\
 \frac{\tilde C_1}{t^{1/4}}\sum_{k=0}^\infty E(\FF_k,d,1)\e^{-k/2}+ \tilde C_2\sum_{k=0}^\infty E(\FF_k,d,2)\e^{-k/2}\\
&\hspace*{14em} +  6\Lambda_3 \V \left(\sqrt p+\frac{p}{t^{1/4}}\right).
\end{align*}
The upper bounds for the entropy integrals from Lemma \ref{EntropyIntegralLebesgueMetric} finally imply that, for any $p\geq 1$,
\[
\left(\E_b\|\H_t\|_\FF^p\right)\p\ \leq\ \Ph_t(p) + \Ps_t(p),\]
with
\begin{align*}
\Ps_t(p)&:=\ 6\V\e\Lambda_3\left\{\frac{\tilde C_1 v}{t^{1/4}}\left(1+ \log\left(\frac{\AA}{\V}\sqrt{\mathcal S + p\Lambda t}\right)\right)+
2\tilde C_2\sqrt{v\log\left(\frac{\AA}{\V}\sqrt{\mathcal S + p\Lambda t}\right)}\right\}\\
&\hspace*{24em} + 6\Lambda_3\V \left(\sqrt p+\frac{p}{t^{1/4}}\right).
\end{align*}
\end{proof}

\end{appendix}

\bibliography{condiff} 

\def\cprime{$'$} \def\cprime{$'$}
\begin{thebibliography}{}

\bibitem[Aeckerle-Willems and Strauch(2018)Aeckerle-Willems and
  Strauch]{cacs18}
Aeckerle-Willems, C. and Strauch, C. (2018).
\newblock Sup-norm adaptive simultaneous drift estimation for ergodic
  diffusions.
\newblock arXiv preprint.
\newblock \texttt{arXiv:1808.10660}.

\bibitem[Barlow and Yor(1982)Barlow and Yor]{bayo82}
Barlow, M.~T. and Yor, M. (1982).
\newblock Semimartingale inequalities via the {G}arsia--{R}odemich--{R}umsey
  lemma, and applications to local times.
\newblock {\em J. Funct. Anal.}, {\bf 49}(2), 198--229.

\bibitem[Bhattacharya(1982)Bhattacharya]{bhat82}
Bhattacharya, R.~N. (1982).
\newblock On the functional central limit theorem and the law of the iterated
  logarithm for {M}arkov processes.
\newblock {\em Z. Wahrsch. Verw. Gebiete\/}, {\bf 60}(2), 185--201.

\bibitem[Bitseki~Penda {\em et~al.}(2017)Bitseki~Penda, Hoffmann, and
  Olivier]{bihool17}
Bitseki~Penda, S.~V., Hoffmann, M., and Olivier, A. (2017).
\newblock Adaptive estimation for bifurcating {M}arkov chains.
\newblock {\em Bernoulli\/}, {\bf 23}(4B), 3598--3637.

\bibitem[Dirksen(2015)Dirksen]{dirk15}
Dirksen, S. (2015).
\newblock Tail bounds via generic chaining.
\newblock {\em Electron. J. Probab.}, {\bf 20}, no. 53, 29 pp.

\bibitem[Gao {\em et~al.}(2013)Gao, Guillin, and Wu]{ggw10}
Gao, F., Guillin, A., and Wu, L. (2013).
\newblock Bernstein type's concentration inequalities for symmetric {Markov
  processes}.
\newblock {\em Teor. Veroyatnost. i Primenen\/}, {\bf 58}(3), 521--549.

\bibitem[Gin{\'e} and Nickl(2009)Gin{\'e} and Nickl]{gini09}
Gin{\'e}, E. and Nickl, R. (2009).
\newblock An exponential inequality for the distribution function of the kernel
  density estimator, with applications to adaptive estimation.
\newblock {\em Probab. Theory Relat.~Fields\/}, {\bf 143}(3-4), 569--596.

\bibitem[Gin\'e and Nickl(2016)Gin\'e and Nickl]{gini16}
Gin\'e, E. and Nickl, R. (2016).
\newblock {\em Mathematical foundations of infinite-dimensional statistical
  models\/}.
\newblock Cambridge Series in Statistical and Probabilistic Mathematics.
  Cambridge University Press, New York.

\bibitem[Gordin and Lifsic(1978)Gordin and Lifsic]{MR0501277}
Gordin, M.~I. and Lifsic, B.~A. (1978).
\newblock Central limit theorem for stationary {M}arkov processes.
\newblock {\em Dokl. Akad. Nauk SSSR\/}, {\bf 239}(4), 766--767.

\bibitem[Kailath {\em et~al.}(1978)Kailath, Segall, and Zakai]{kaietal78}
Kailath, T., Segall, A., and Zakai, M. (1978).
\newblock Fubini-type theorems for stochastic integrals.
\newblock {\em Sankhy\=a Ser. A\/}, {\bf 40}(2), 138--143.

\bibitem[Kutoyants(2004)Kutoyants]{kut04}
Kutoyants, Y.~A. (2004).
\newblock {\em Statistical inference for ergodic diffusion processes\/}.
\newblock Springer Series in Statistics. Springer-Verlag London, Ltd., London.

\bibitem[Le~Gall(2016)Le~Gall]{legall16}
Le~Gall, J.-F. (2016).
\newblock {\em Brownian motion, martingales, and stochastic calculus\/}, volume
  274 of {\em Graduate Texts in Mathematics\/}.
\newblock Springer, [Cham], french edition.

\bibitem[Meyer(1976)Meyer]{MR0501332}
Meyer, P.~A. (1976).
\newblock Un cours sur les int\'egrales stochastiques.
\newblock pages 245--400. Lecture Notes in Math., Vol. 511.

\bibitem[Pokern {\em et~al.}(2013)Pokern, Stuart, and van Zanten]{poketal13}
Pokern, Y., Stuart, A.~M., and van Zanten, J.~H. (2013).
\newblock Posterior consistency via precision operators for {B}ayesian
  nonparametric drift estimation in {SDE}s.
\newblock {\em Stochastic Process. Appl.}, {\bf 123}(2), 603--628.

\bibitem[Talagrand(1996)Talagrand]{tala96}
Talagrand, M. (1996).
\newblock Majorizing measures: the generic chaining.
\newblock {\em Ann. Probab.}, {\bf 24}(3), 1049--1103.

\bibitem[Talagrand(2014)Talagrand]{tala14}
Talagrand, M. (2014).
\newblock {\em Upper and lower bounds for stochastic processes\/}, volume~60 of
  {\em Ergebnisse der Mathematik und ihrer Grenzgebiete.}
\newblock Springer, Heidelberg.
\newblock Modern methods and classical problems.

\bibitem[van~der Vaart and van Zanten(2005)van~der Vaart and van
  Zanten]{vdvvz05}
van~der Vaart, A.~W. and van Zanten, H. (2005).
\newblock Donsker theorems for diffusions: Necessary and sufficient conditions.
\newblock {\em Ann. Probab.}, {\bf 33}(4), 1422--1451.

\end{thebibliography}

\end{document}